\documentclass[a4paper,12pt]{article}

\textwidth=160mm
\textheight=235mm
\setlength{\topmargin}{-10mm}
\setlength{\evensidemargin}{0mm}
\setlength{\oddsidemargin}{0mm}

%
%---------- Macros by Kazushi Ueda ----------
%

%\usepackage{showkeys}

\usepackage{mymacros}

%
%---------- only in this paper ----------
%

\newcommand{\sm}{\mathrm{sm}}
\newcommand{\homA}{\Hom}
\newcommand{\mA}{\frakm}
\newcommand{\C}{C}

\newcommand{\HW}{\mathop{HW}\nolimits}

\newcommand{\CF}{\mathop{CF}\nolimits}
\newcommand{\HF}{\mathop{HF}\nolimits}
\newcommand{\crit}{\mathrm{crit}}
\newcommand{\Ll}{{L_{1,0,\lambda}}}
\newcommand{\pl}{{p_{\lambda, \alpha}}}
\newcommand{\ql}{{q_\lambda}}
\newcommand{\rl}{{r_\lambda}}
\newcommand{\Xl}{{X^0_\Lambda}}
\newcommand{\hol}{\operatorname{hol}}
\newcommand{\MA}{\scrA}

\newcommand{\Symp}{\operatorname{Symp}}

%
%-------------------- title -------------------------
%

\title{Lagrangian torus fibrations and\\
homological mirror symmetry for the conifold}
\author{Kwokwai Chan, Daniel Pomerleano, Kazushi Ueda}
\date{}
\pagestyle{plain}

%
%-------------------- text stars --------------------
%

\begin{document}

\maketitle

\begin{abstract}
We discuss homological mirror symmetry for the conifold
from the point of view of the Strominger-Yau-Zaslow conjecture.
\end{abstract}

\section{Introduction}

The behavior of strings and branes near the tip of a cone
has been studied extensively in string theory.
%See e.g. \cite{MR1038473} for an early string theory paper
%on this subject.
The case when the cone is a Gorenstein affine toric 3-fold
is of particular importance,
not only from the point of view of mirror symmetry,
but also for applications to geometric engineering of Seiberg-Witten theory
\cite{MR1467889}
and the AdS/CFT correspondence
\cite{MR1743597}.

Let $Z$ be a Gorenstein affine toric 3-fold
and $\varphi : X \to Z$ be a crepant resolution.
The convex hull $\triangle$ of primitive generators
of one-dimensional cones of the fan
describing $Z$ as a toric variety is a lattice polygon,
which lies on the plane
$$
 \Nbar = \lc n = (n_1, n_2, n_3) \in N \relmid
  n_3 = 1 \rc
$$
under a suitable choice of a coordinate
$N \cong \bZ^3$
on the lattice of one-parameter subgroups
of the dense torus.

If $\triangle$ contains an interior lattice point,
then $X$ is derived-equivalent to the total space $\scK_\frakX$
of the canonical bundle of a 2-dimensional toric Fano stack $\frakX$,
and homological mirror symmetry for $X$ is related
to homological mirror symmetry for $\frakX$
by suspension \cite{Seidel_suspension}.
The case when $\triangle$ does not contain
any interior lattice point is more elusive,
and we discuss such a case in this paper.

Let $Z$ be the {\em conifold},
which is a synonym for a 3-dimensional ordinary double point;
\begin{align*}
Z = \{ (u_1, v_1, u_2, v_2) \in \bC^4 \mid u_1 v_1 = u_2v_2 \}.
\end{align*}
The lattice polygon $\triangle$ for $Z$
is the unit lattice square,
which does not contain any interior lattice points.
The smoothing
\begin{align*}
 Y = \{ (u_1, v_1, u_2, v_2) \in \bC^4 \mid u_1 v_1 = u_2v_2-\epsilon \}
\end{align*}
of the conifold is expected to be mirror
to the small resolution $\varphi : X \to Z$
(cf.~e.g.~\cite{Seidel-Thomas,MR1882328}).

In this paper,
we discuss homological mirror symmetry for the conifold
from the point of view of the Strominger-Yau-Zaslow conjecture \cite{Strominger-Yau-Zaslow}.
To do this,
it is convenient to consider an open subvariety $Y^0$ of $Y$,
which is the complete intersection in
$\bCx \times \bC^4 = \Spec \bC[z, z^{-1}, u_1, u_2, v_1, v_2]$
defined by
\begin{align}
\begin{cases}
 u_1 v_1 = z - a, \\
 u_2 v_2 = z - b.
\end{cases}
 \label{eq:Y_conifold}
\end{align}
Here $a$ and $b$ are distinct non-zero complex numbers,
which we assume to be negative real numbers for simplicity
in this section.

We equip $Y^0$ with the restriction $\omega$
of the symplectic form on $\bCx \times \bC^4$
obtained as the sum of the cylindrical K\"{a}hler form
on $\bCx$ and the Euclidean K\"{a}hler form on $\bC^4$.
Then the map
$$
\begin{array}{cccc}
 \rho : & Y^0 & \to & \bR^3 \\
  & \vin & & \vin \\
  & (z, u_1, v_1, u_2, v_2) & \mapsto &
   \lb \log |z|, \frac{1}{2} \lb |u_1|^2-|v_1|^2 \rb,
    \frac{1}{2} \lb |u_2|^2-|v_2|^2 \rb \rb
\end{array}
$$
is a Lagrangian torus fibration,
whose discriminant loci is given by the disjoint union of two skew lines
$$
 \Gamma = \lc (\log |a|, 0, \lambda_2) \in B \mid \lambda_2\in\bR \rc
  \cup \lc (\log |b|, \lambda_1, 0)\in B \mid \lambda_1\in\bR \rc
$$
as shown in \pref{fg:SYZ_base}.
% (the green lines).
\begin{figure}[ht]
\centering
\input{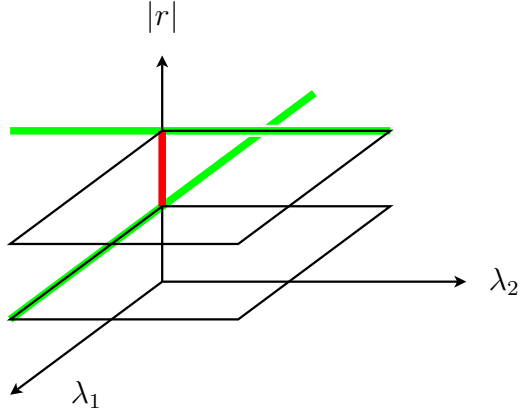}
\caption{The base of the SYZ fibration}
\label{fg:SYZ_base}
\end{figure}

The regular fibers of $\rho$ are special
with respect to the holomorphic volume form
$$
 \Omega = d \log z \wedge d \log u_1 \wedge d \log u_2,
$$
and we will refer to $\rho$ as
the {\em SYZ fibration}.
%
%\begin{remark} \label{rm:remark}
%If we write the deformed conifold as
%$$
% Y = \lc (x, y, u, v) \in \bC^4 \relmid x y - u v + \alpha - \beta = 0 \rc,
%$$
%then the natural holomorphic volume form on $Y$ is
%\begin{align*}
% \Res \left[ \frac{d x \wedge d y \wedge d u \wedge d v}{x y - u v + \alpha - \beta} \right]
%  = d z \wedge d \log x \wedge d \log u.
%\end{align*}
%Then $\rho$ is not special with respect to this form,
%and the natural special Lagrangian fibration
%would have $\Re z$ in place of $\log |z|$,
%so that it is a special Lagrangian $\bR \times T^2$-fibration
%and not a special Lagrangian torus fibration.
%\end{remark}
%

The mirror $X^0$ of $Y^0$ is identified
in \cite[Theorem 11.1]{Abouzaid-Auroux-Katzarkov_LFB}
as the complement of a divisor
in the resolved conifold;
%$$
% X^0 = X \setminus D.
%$$
%Here $X$ is the total space of $\scO_{\bP^1}(-1) \oplus \scO_{\bP^1}(-1)$
%and $D$ is the divisor defined by $\chi_(0,0,1) = 1$,
\begin{align*}
 X^0 &= X \setminus D, \\
 X &= %\operatorname{tot}
  \scO_{\bP^1}(-1) \oplus \scO_{\bP^1}(-1).
\end{align*}
Here, the divisor $D$ is the pull-back of the divisor
$
 \{ w_1 w_2 = 0 \}
$
on the conifold
\begin{align*}
 Z = \{ (u, v, w_1, w_2) \in \bC^4 \mid u v = (1 + w_1)(1 + w_2) \}
\end{align*}
along the crepant resolution $\varphi : X \to Z$.
The natural projection and
the inclusion of the zero-section
will be denoted by
$\pi : X^0 \to \bP^1$ and
$\iota : \bP^1 \hookrightarrow X^0$
respectively.
%
%\begin{remark} \label{rm:remark}
%It is natural to ask what is the mirror of the smoothed conifold $Y$. In fact, one can show from the calculations of holomorphic discs below that the mirror is no longer a Calabi-Yau variety, but the \emph{Landau-Ginzburg model} $ (X^0, u)$. Although we will not describe them here, the computations needed to prove mirror symmetry in this setting reduce essentially to the Kn\"{o}rrer periodicity theorem for matrix factorizations.
%\end{remark}
%
Let $E \subset X^0$ be the image of $\iota$,
which is the exceptional locus of the resolution.
We write $\scO_{X^0}(i) := \pi^* \scO_{\bP^1}(i)$
and $\scO_E(i) := \iota_* \scO_{\bP^1}(i)$ for short. \vskip 10 pt

 %$\gamma : \bR \to \bCx$%
% on the $z$-plane is said to be {\em admissible}%
% if it does not intersect the discriminant%
%$\Delta := \{ a, b \}$ %
%of the double conic fibration $Y^0 \to \bCx$.%
%A path $\gamma : \bR \to \bCx \setminus \Delta$ %
% is {\em strongly admissible} %
% if $|\gamma| : \bR \to \bR_{>0}$ %
% is a strictly increasing function. %
%The {\em winding number} $w(\gamma)$%
%of an admissible path is defined as the intersection number%
%with the interval $\epsilon := [a, b]$%.
To a strongly admissible path $\gamma$,
the definition of which we defer to \pref{sc:lag_path},
one can associate an exact non-compact Lagrangian submanifold
$L_\gamma \subset Y^0$,
which is a section of the SYZ fibration $\rho : Y^0 \to \bR^3$.
The {\em SYZ transform}
\cite{Arinkin-Polishchuk_FCFT,Leung-Yau-Zaslow_SLHYM}
of a Lagrangian section of an SYZ fibration
is a holomorphic line bundle
on the mirror,
obtained as a kind of Fourier transform.

\begin{theorem} \label{th:SYZ}
The SYZ transform $\scL_\gamma$
of the Lagrangian section $L_\gamma$
associated with a strongly admissible path
$\gamma : \bR \to \bCx \setminus \Delta$
is the line bundle
$\scO_{X^0}(-w(\gamma))$ on $X^0$.
\end{theorem}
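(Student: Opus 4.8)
The plan is to compute the SYZ transform explicitly through its fibrewise, Fourier-type construction recalled above, and then to pin down the resulting holomorphic line bundle on $X^0$ by a single numerical invariant, its degree along the exceptional curve $E = \iota(\bP^1)$. Since every line bundle on $X^0$ is of the form $\scO_{X^0}(i) = \pi^* \scO_{\bP^1}(i)$, the class $\scL_\gamma$ is determined by the integer $\langle c_1(\scL_\gamma), [E] \rangle$, and it suffices to prove that this integer equals $-w(\gamma)$. Over the complement $B^0 = \bR^3 \setminus \Gamma$ of the discriminant, $\rho$ restricts to a Lagrangian $T^3$-bundle, the mirror $X^0$ is the dual fibration whose points are pairs $(b, \nabla)$ with $\nabla$ a flat $U(1)$-connection on $\rho^{-1}(b)$, and, writing the section $L_\gamma$ as the graph of a closed one-form $\theta_\gamma$ on $B^0$, the transform $\scL_\gamma$ is obtained by gluing trivial bundles with transition data built from $\theta_\gamma$ and the period lattice. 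Because $L_\gamma$ is a genuine exact section, this produces a well-defined holomorphic line bundle over the smooth locus.

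First I would make the exceptional curve visible on the base. In the coordinates $(z, u_1, v_1, u_2, v_2)$ the two strands of $\Gamma$ are the loci over which the circle in the $(u_1, v_1)$-conic, respectively the circle in the $(u_2, v_2)$-conic, collapses; the curve $E$ is then represented on $B$ by the two-sphere obtained by letting such a vanishing cycle collapse at the two ends of a path joining the strands of $\Gamma$. Restricting $\scL_\gamma$ to this sphere and integrating its curvature, Stokes' theorem converts the computation into the total winding of the fibre phase of $L_\gamma$ along the equator. By construction the phases of the section are constrained by $\arg u_1 + \arg v_1 = \arg(z - a)$ and $\arg u_2 + \arg v_2 = \arg(z-b)$, so the relevant phase is the combination $\arg(z-a) - \arg(z-b)$, whose total variation as $z$ runs along $\gamma$ is precisely the integer $w(\gamma)$ attached to the strongly admissible path. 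Taking account of the sign built into the Fourier-type transform gives $\langle c_1(\scL_\gamma), [E] \rangle = -w(\gamma)$, hence $\scL_\gamma \cong \scO_{X^0}(-w(\gamma))$.

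I expect the main obstacle to lie not in the winding count but in the passage across the discriminant $\Gamma$. The Fourier-type construction is defined a priori only over $B^0$, so one must verify that $\scL_\gamma$ extends to a genuine line bundle over all of $X^0$, including over $E$ and over the singular fibres, and moreover that its holomorphic structure agrees with the complex structure of the resolved conifold rather than merely with its underlying smooth manifold. This is exactly the point at which the strong admissibility of $\gamma$ and the precise geometry of the two skew strands of $\Gamma$ must be used to control the affine monodromy so that the gluing data extend coherently. I would carry this out in explicit coordinates, matching the transition functions of $\scL_\gamma$ with the standard charts of $X = \scO_{\bP^1}(-1) \oplus \scO_{\bP^1}(-1)$ associated with the two affine pieces of $\bP^1$, and checking the extension chart by chart; this simultaneously re-derives the degree as the exponent in the resulting transition function $\zeta^{w(\gamma)}$ and confirms the identification with $\scO_{X^0}(-w(\gamma))$.
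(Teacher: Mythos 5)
Your proposal follows essentially the same route as the paper: both reduce the identification of $\scL_\gamma$ to its degree on the exceptional curve $E$ (using that every line bundle on $X^0$ is pulled back from $\bP^1$), compute that degree as a curvature integral, and identify it with the total winding of the fibre phase of the section along the relevant path, which equals $-w(\gamma)$; your direct integration of $d\arg(z-a)-d\arg(z-b)$ is just a reformulation of the paper's computation of the increment of the angle coordinate $\xi_3$ after deforming $\gamma$ to the concatenation of $\gamma_0$ with $w(\gamma)$ loops. The extension across the discriminant that you flag as the main obstacle is handled in the paper by the holomorphicity of the wall-crossing gluing data, which is the coordinate-level check you propose.
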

Here $w(\gamma)$ denotes the winding number
defined in \pref{sc:lag_path}.
%
%Notice that the defining equation \eqref{eq:Y_conifold} of $Y^0$ makes sense
%even when $a$ and $b$ are complex numbers
%which are not necessarily negative real.
%One natural choice for $a$ and $b$
%is $(a, b) = (-2, -1)$,
%which is of `large radius limit' type.
%Another natural choice is $(a, b) = (\sqrt{-1}, - \sqrt{-1})$,
%which is of `orbifold' type. The symplectic structure of $Y$ does not depend on the choice
%of $a$ and $b$ by Moser's theorem.
%
Let $\gamma_0$ and $\gamma_1$ be admissible paths
shown in \pref{fg:noncompact_lagrangians2}.
The associated Lagrangian submanifolds of $Y^0$
will be denoted by
$L_0 := L_{\gamma_0}$ and $L_1 := L_{\gamma_1}$,
whose winding numbers are 0 and $-1$ respectively.
Let $\scW$ be the wrapped Fukaya category of $Y^0$
consisting of $L_0$ and $L_1$.
%Our choice for the wrapping Hamiltonian follows
%that of Pascaleff \cite{Pascaleff_FCMPP}.

\begin{figure}[ht]
\begin{minipage}[t]{.5 \linewidth}
\centering
\input{noncompact_lagrangians2.pst}
\caption{Non-compact Lagrangians}
\label{fg:noncompact_lagrangians2}
\end{minipage}
\begin{minipage}[t]{.5 \linewidth}
\centering
\input{compact_lagrangians2.pst}
\caption{Compact Lagrangians}
\label{fg:compact_lagrangians2}
\end{minipage}
\end{figure}

%\begin{figure}[ht]
%\begin{minipage}[t]{.5 \linewidth}
%\centering
%\input{noncompact_lagrangians1.pst}
%\caption{Non-compact Lagrangians}
%\label{fg:noncompact_lagrangians1}
%\end{minipage}
%\begin{minipage}[t]{.5 \linewidth}
%\centering
%\input{compact_lagrangians1.pst}
%\caption{Compact Lagrangians}
%\label{fg:compact_lagrangians1}
%\end{minipage}
%\end{figure}

%Then we have the following theorem which will be proved in
%Section~\ref{sc:fuk}:
\begin{theorem} \label{th:hms}
There is an equivalence
\begin{align} \label{eq:hms}
 D^b \scW \cong D^b \coh X^0
\end{align}
of triangulated categories
sending $L_i$ to $\scO_{X^0}(i)$ for $i = 0, 1$.
\end{theorem}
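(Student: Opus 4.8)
\emph{Strategy.} The plan is to realise both sides of \eqref{eq:hms} as the category of perfect modules over one and the same endomorphism algebra of the distinguished generators. Since $\scW$ is by definition the full subcategory of the wrapped Fukaya category on $L_0$ and $L_1$, the triangulated category $D^b \scW$ is tautologically generated by these two objects, and is quasi-equivalent to $\operatorname{Perf} \MA$, where $\MA := \bigoplus_{i,j \in \{0,1\}} \CW^*(L_i, L_j)$ is the $A_\infty$-endomorphism algebra of $L_0 \oplus L_1$, with cohomology $\bigoplus_{i,j} \HW^*(L_i, L_j)$. On the $B$-side I would show that $\scO_{X^0}(0) \oplus \scO_{X^0}(1)$ is a tilting generator of $D^b \coh X^0$, with (ordinary) endomorphism algebra $\MB$, so that $D^b \coh X^0 \cong \operatorname{Perf} \MB$. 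The theorem then reduces to producing an $A_\infty$-quasi-isomorphism $\MA \simeq \MB$ compatible with the object-level correspondence $L_i \mapsto \scO_{X^0}(i)$ of \pref{th:SYZ}.

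\emph{The $B$-side.} Here I would use $\operatorname{Ext}^k_{X^0}(\scO_{X^0}(i), \scO_{X^0}(j)) \cong H^k(X^0, \scO_{X^0}(j-i))$, reducing everything to the cohomology of $\scO_{X^0}(n)$ for $n \in \{-1, 0, 1\}$. Pushing forward along $\pi : X^0 \to \bP^1$ and using that $X^0 = X \setminus D$ is obtained from the resolved conifold $X$ by removing the pull-back $D$ of $\{ w_1 w_2 = 0 \}$, the sheaves $\pi_* \scO_{X^0}(n)$ are computed from $\bigoplus_{k \geq 0} \operatorname{Sym}^k(\scO_{\bP^1}(1)^{\oplus 2})$ after localisation along $D$, and a direct check shows that the higher cohomology vanishes. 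Thus $\MB$ is concentrated in degree $0$ and $\scO_{X^0}(0) \oplus \scO_{X^0}(1)$ is tilting; $\MB$ is the path algebra with relations of the conifold quiver, localised along $D$. For generation I would use that $\scO_{X^0}(0) \oplus \scO_{X^0}(1)$ is the restriction to the open subvariety $X^0 \subset X$ of the tilting bundle $\scO_X(0) \oplus \scO_X(1)$ underlying the non-commutative crepant resolution of the conifold; since the open-restriction functor $D^b \coh X \to D^b \coh X^0$ is exact and essentially surjective, it carries a set of generators to a set of generators. Smoothness of $X^0$ then gives $D^b \coh X^0 \cong \operatorname{Perf} \MB$.

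\emph{The $A$-side.} Here I would compute $\HW^*(L_i, L_j)$ directly from the SYZ fibration $\rho$ of \eqref{eq:Y_conifold}, choosing a wrapping Hamiltonian adapted to the cylindrical end of $Y^0$ and a $\bZ$-grading induced by the holomorphic volume form $\Omega$. The generators of $\CW^*(L_i, L_j)$ are the Hamiltonian chords from $L_i$ to $L_j$, which project to explicit lattice data in the base $B = \bR^3$; I expect each chord to sit in degree $0$, matching the dimensions already found on the $B$-side. Granting this, both $\MA$ and $\MB$ are concentrated in degree $0$ with vanishing differential, so that $m_k = 0$ for all $k \neq 2$ by degree reasons and both $A_\infty$-algebras are formal; it then suffices to match the associative products. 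The product on $\MA$ is a signed count of rigid holomorphic triangles with boundary on $L_0, L_1$, which via $\rho$ I would reduce to a tropical count in $B$ around the two skew discriminant lines of $\Gamma$; comparing the resulting structure constants with the multiplication of the localised conifold quiver algebra $\MB$, and fixing the correspondence of idempotents and arrows by \pref{th:SYZ}, yields $\MA \cong \MB$.

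\emph{Main obstacle.} The crux is the $A$-side, and specifically the two Floer-theoretic inputs: showing that $\HW^*(L_i, L_j)$ is concentrated in degree $0$ (which makes formality automatic and bypasses all higher $A_\infty$-operations), and evaluating the holomorphic-triangle counts precisely enough to recover the conifold relations. The degree statement requires careful control of the Maslov grading and of the chords escaping to the cylindrical end, while the product computation requires either an honest curve count or its reduction to tropical geometry in $B$; the matching of relations is where the two skew lines of $\Gamma$ play their role. Once $\MA \cong \MB$ is in hand, the chain $D^b \scW \cong \operatorname{Perf} \MA \cong \operatorname{Perf} \MB \cong D^b \coh X^0$ gives the desired equivalence sending $L_i$ to $\scO_{X^0}(i)$.
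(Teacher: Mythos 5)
Your proposal follows essentially the same route as the paper: establish that $\scO_{X^0}\oplus\scO_{X^0}(1)$ is a tilting object (the paper does this via Orlov's localisation lemma and a direct-limit computation of cohomology), show the wrapped Floer groups of $L_0, L_1$ are concentrated in degree $0$ so that formality is automatic, and match the products by counting holomorphic triangles fibered over the $z$-plane whose structure constants are governed by how often the triangle meets the two discriminant points (Pascaleff's formula). The only ingredient you elide is the paper's appendix comparison (\pref{th:appendix2}) showing that the fibered wrapping Hamiltonian used for the computation actually computes the standard wrapped Floer cohomology, but this is a technical point within the same overall strategy.
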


There is a natural choice
of a pair $(S_0, S_1)$ of Lagrangian 3-spheres in $Y^0$
which are dual to $(L_0, L_1)$; they are $T^2$-fibrations
over the paths shown in \pref{fg:compact_lagrangians2}.

\begin{theorem} \label{th:SYZ0}
The SYZ transforms of the Lagrangian 3-spheres
$S_0$ and $S_1$
are the line bundles
$\scO_E$ and $\scO_E(-1)$
on the exceptional locus $E$
respectively.
\end{theorem}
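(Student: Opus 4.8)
The plan is to compute the SYZ transform of $S_i$ directly as a fiberwise Fourier--Mukai transform, exploiting the fact that $S_i$ is not a section but a subtorus bundle over a path. Recall that a point of the mirror $X^0$ corresponds, over the smooth locus $B \setminus \Gamma$ of the base, to a pair $(b, \nabla)$ consisting of a base point $b$ and a flat $U(1)$-connection $\nabla$ on the torus fiber $\rho^{-1}(b) \cong T^3$; the dual fibration $\check{\rho} : X^0 \to B$ has fiber $\check{F}_b = H^1(\rho^{-1}(b); \bR)/H^1(\rho^{-1}(b); \bZ)$. The SYZ transform assigns to $(b,\nabla)$ the family-Floer data of $S_i$ over that fiber, so its stalk is governed by $\HF(S_i, (\rho^{-1}(b), \nabla))$. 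For the section $L_\gamma$ of \pref{th:SYZ} this is rank one over every $b$, producing a line bundle on all of $X^0$; for $S_i$, which meets $\rho^{-1}(b)$ in a full $T^2$-coset precisely when $b$ lies on the path $p_i$ of \pref{fg:compact_lagrangians2} and is empty otherwise, the transform is a sheaf supported over $p_i$.

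First I would localize the support. Over a point $b$ in the interior of $p_i$ the intersection $S_i \cap \rho^{-1}(b)$ is a $T^2$-coset inside the fiber $T^3$, and the Floer cohomology is nonzero exactly when the restriction of $\nabla$ to this $T^2$ is trivial. This is a codimension-two condition on $\check{F}_b$ and therefore cuts out a single dual circle $\check{S}^1_b \subset \check{F}_b$, dual to the remaining non-collapsing circle of the fiber. The two skew components of $\Gamma$ (see \pref{fg:SYZ_base}) are the loci where the $(u_1,v_1)$- and the $(u_2,v_2)$-circles of the fibers collapse; since the two endpoints of $p_i$ lie one on each component, one of the two generating circles of the $T^2$ degenerates at each end of the path, forcing the dual circles $\check{S}^1_b$ to collapse to points over the two endpoints. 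The union $\bigcup_{b \in p_i} \check{S}^1_b$ is therefore a two-sphere, and I would match it with the exceptional curve $E \subset X^0$ using the identification of $X^0$ with the resolved conifold minus a divisor, concluding that the SYZ transform of $S_i$ is a line bundle supported on $E \cong \bP^1$.

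It remains to identify the line bundle, that is, to compute its degree on $E$. The SYZ transform carries the tautological $U(1)$-connection built from the Poincar\'e pairing together with the symplectic-area data of $S_i$, and its curvature computes the first Chern class of the transform restricted to $E$. I would extract the degree by tracking the holonomy of this connection as $b$ traverses $p_i$ and $\check{S}^1_b$ sweeps out $E$, the relevant integer being exactly the winding of $p_i$ around $\Gamma$ --- the same mechanism that produces $w(\gamma)$ in \pref{th:SYZ}. Since $S_0$ and $S_1$ are modeled on paths whose windings differ by one, their transforms have degrees differing by one, giving $\scO_E$ for $S_0$ and $\scO_E(-1)$ for $S_1$.

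The main obstacle will be the analysis near the discriminant locus $\Gamma$, where the fibers of $\rho$ degenerate and the naive dual-torus description fails. Making rigorous both the closing-up of the circles $\check{S}^1_b$ into $E$ and the holonomy computation requires a careful local model for the SYZ fibration along each of the two skew lines --- essentially a focus--focus degeneration crossed with a line --- and a precise matching of the collapsing cycle of the $T^2$ with the vanishing cycle of that local model. Controlling the Floer differential and the contributions of $S_i$ as it crosses $\Gamma$, so that the transform is the honest line bundle $\scO_E(-i)$ rather than a more complicated sheaf on a thickening of $E$, is where the substantive work lies.
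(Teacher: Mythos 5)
Your overall strategy---identify the transform of $S_i$ as a sheaf supported on the $\bP^1$ swept out by dual circles over the path, then extract its degree from the holonomy of the third angle coordinate, i.e.\ from a winding number---is in substance the route the paper takes. The paper realizes $S_i$ minus its two boundary circles as (a lattice quotient of) a translate of the conormal bundle of the segment $\ell_0 = (|b|,|a|)\times\{\vec{0}\}$ in the base, applies the Arinkin--Polishchuk/Leung--Yau--Zaslow transform in the wall-crossing-corrected action-angle coordinates, and computes $\deg = -(\xi_3(|a|,\vec{0})-\xi_3(|b|,\vec{0})) = -w(\sigma)$; this is your holonomy computation made explicit, and working in the explicit coordinates on $X^0$ is what lets the paper sidestep the analysis near $\Gamma$ that you flag as the substantive difficulty: the closing-up of the annulus into $E$ is read off directly.

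There is, however, one concrete error in your support computation. You assert that the Floer cohomology of $S_i$ against a fiber $(\rho^{-1}(b),\nabla)$ is nonzero exactly when $\nabla$ restricts trivially to the $T^2$-coset; that condition places the transform on the locus $y_1=y_2=0$, i.e.\ $w_1=w_2=+1$. This locus is not the exceptional curve and does not close up into a compact curve in $X^0$ (the only compact curve in the resolved conifold is $E$, lying over the singular point $u=v=0$, $w_1=w_2=-1$), so your ``matching with $E$'' step would fail as stated. The paper fixes this by equipping $L_\sigma$ with the flat $U(1)$-connection $\nabla_0 = d - \pi\sqrt{-1}(d\xi_1+d\xi_2)$, of holonomy $-1$ around each of the two circle directions that collapse at the endpoints of $\ell_0$; equivalently, one must account for the sign contributed by the spin structure on the collapsing circles (compare the sign analysis in \pref{sc:immersed}). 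With that correction the support is $y_1=y_2=1/2$, i.e.\ $w_1=w_2=-1$, which is $E$, and the rest of your argument goes through. You should also pin down the normalization: the transform of $S_0$ has degree $0$ because $\sigma_0$ is the chosen reference path relative to which the increment of $\xi_3$ (equivalently, the winding number) is measured; ``the windings differ by one'' alone does not determine which sphere goes to $\scO_E$ and which to $\scO_E(-1)$.
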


Let $\scF_0$ be the Fukaya category of $Y^0$
consisting of $S_0$ and $S_1$,
and $\coh_0 X^0$ be the abelian category of coherent sheaves
supported on the exceptional locus
of the resolution $\varphi : X \to Z$.
%Then we have the following theorem which will be proved in
%Section~\ref{sc:fuk0}:

\begin{theorem} \label{th:hms0}
There is an equivalence
\begin{align} \label{eq:hms0}
 D^b \scF_0 \cong D^b \coh_0 X^0
\end{align}
of triangulated categories
sending $S_0$ and $S_1$
to $\scO_E$ and $\scO_E(-1)$ respectively.
\end{theorem}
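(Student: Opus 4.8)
The plan is to deduce \pref{th:hms0} from \pref{th:hms} by presenting both sides as perfect modules over a single $A_\infty$-algebra and matching that algebra. By construction $\scF_0$ is the full subcategory on $S_0,S_1$, so $D^b\scF_0 \cong \mathrm{perf}\,B$, where $B$ is the Floer $A_\infty$-endomorphism algebra of $S_0\oplus S_1$. On the mirror side, the Koszul resolution of the zero section coming from $N^\vee_{E/X^0}\cong\scO_E(1)^{\oplus 2}$,
\begin{equation*}
 0 \longrightarrow \scO_{X^0}(2) \longrightarrow \scO_{X^0}(1)^{\oplus 2} \longrightarrow \scO_{X^0} \longrightarrow \scO_E \longrightarrow 0,
\end{equation*}
together with the analogous resolution of $\scO_E(-1)$ and the fact that $\scO_{X^0}(2)\in\langle\scO_{X^0},\scO_{X^0}(1)\rangle$, shows that $\scO_E$ and $\scO_E(-1)$ lie in the thick subcategory generated by $\scO_{X^0}$ and $\scO_{X^0}(1)$. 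A dévissage along powers of $\scI_E$ (using Beilinson's collection $\{\scO_E,\scO_E(-1)\}$ on $E\cong\bP^1$ for the subquotients) then shows that $\scO_E,\scO_E(-1)$ split-generate $D^b\coh_0 X^0$, so $D^b\coh_0 X^0 \cong \mathrm{perf}\,A$ with $A=\mathrm{RHom}_{X^0}(\scO_E\oplus\scO_E(-1),\scO_E\oplus\scO_E(-1))$. It therefore suffices to produce a quasi-isomorphism $B\simeq A$ carrying the idempotent of $S_0$ to that of $\scO_E$ and the idempotent of $S_1$ to that of $\scO_E(-1)$.

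I would compute $A$ first, since it is purely homological. From $N_{E/X^0}\cong\scO_E(-1)^{\oplus 2}$ one gets
\begin{equation*}
 \mathrm{Ext}^k_{X^0}\!\left(\scO_E(i),\scO_E(j)\right) \cong \bigoplus_{p+q=k} H^p\!\left(\bP^1,\ \scO(j-i)\otimes \textstyle\bigwedge^{q}\scO(-1)^{\oplus 2}\right),
\end{equation*}
which pins down the cohomology of $A$: each of $\scO_E,\scO_E(-1)$ is $3$-spherical, there are two degree-$0$ and two degree-$1$ morphisms $\scO_E(-1)\to\scO_E$, and Serre duality on the Calabi--Yau threefold $X^0$ (trivial canonical bundle $K_{X^0}\cong\scO_{X^0}$) fixes the remaining groups. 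This is exactly the cohomology of the Ginzburg (Calabi--Yau-$3$) dg algebra of the conifold quiver with its Klebanov--Witten potential, and by Van den Bergh's description of the noncommutative crepant resolution of the conifold one checks that $A$ itself is quasi-isomorphic to that Ginzburg algebra.

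For the A-side I would transport the computation through \pref{th:hms} rather than counting curves from scratch. Working inside one Fukaya category of $Y^0$ containing both the non-compact sections $L_0,L_1$ and the compact spheres $S_0,S_1$ (the $\Omega$-grading makes it $3$-Calabi--Yau, matching the B-side), the key geometric input is that each $S_i$ is quasi-isomorphic to the iterated cone on the $L_j$ dictated by the Koszul resolution above: concretely, the matching cycle $S_i$ should be obtained from the non-compact Lagrangians by Lagrangian surgery at their intersection points, and Seidel's surgery exact triangles should identify it with the corresponding twisted complex of $L_j$'s. Since the Floer cohomology $\HF(S_i,S_j)$ between compact Lagrangians is unchanged by enlarging the ambient category, $\scF_0$ is a full subcategory and $B$ equals the endomorphism algebra of those twisted complexes; under the equivalence of \pref{th:hms} (sending $L_i\mapsto\scO_{X^0}(i)$) this goes to the endomorphism algebra of the Koszul complexes computing $\scO_E$ and $\scO_E(-1)$, namely $A$. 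The resulting quasi-isomorphism $B\simeq A$ sends $S_0\mapsto\scO_E$ and $S_1\mapsto\scO_E(-1)$, consistently with \pref{th:SYZ0}, and passing to perfect modules yields the asserted equivalence.

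The main obstacle is the geometric identification in the previous paragraph: establishing that the compact spheres $S_0,S_1$ really are the iterated surgeries of $L_0,L_1$ predicted by the Koszul resolution, with the correct signs and, crucially, the correct higher products. This is precisely where the nontrivial $A_\infty$-structure must be matched rather than merely the cohomology-level groups: all the computations above are compatible with a formal algebra, whereas $A$ is \emph{not} formal, its Klebanov--Witten potential being exactly what records the smoothing of the conifold and distinguishes $\coh_0 X^0$ from its naive model. I expect this step to require either a direct analysis of the holomorphic triangles and quadrilaterals among the $L_j$ and $S_i$, or an appeal to the rigidity of Calabi--Yau-$3$ structures on the given cohomology algebra, reducing the verification to a single nonvanishing higher product that fixes the potential.
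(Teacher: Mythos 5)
Your overall architecture --- reduce both sides to perfect modules over a single $A_\infty$-algebra, compute the B-side algebra from the Koszul resolution of the zero section and identify it with the superpotential (Ginzburg/Klebanov--Witten) algebra of the conifold quiver --- is sound, and the B-side half essentially matches what the paper does in \pref{sc:coh} (there the algebra $\mathrm{End}(\scO_E\oplus\scO_E(-1)[1])$ is presented via the dimer model, which is exactly the cyclic CY3 algebra with that potential). The problem is the A-side. The step you yourself flag as the "main obstacle" is not a technical verification to be filled in later; it is the entire content of the theorem, and as stated it is not available. To run your transport argument you need (i) an $A_\infty$-category of $Y^0$ containing simultaneously the non-compact wrapped objects $L_0,L_1$ and the compact spheres $S_0,S_1$, (ii) a proof that each $S_i$ is quasi-isomorphic in that category to the iterated twisted complex of $L_0,L_1$ mirror to the Koszul resolution, with the correct higher products, and (iii) an upgrade of \pref{th:hms} to an $A_\infty$-functor on this mixed category. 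The paper deliberately does not construct (i): its wrapped category is defined only at the level of cohomology rings (legitimized by the degree-zero concentration of the wrapped groups), and the existence and Calabi--Yau property of the enlarged category $\scWtilde$ is exactly what is isolated as \pref{cj:PD} and left unproven. A surgery/cone identification of a compact $S^3$ with an iterated cone of non-compact sections (which would moreover have to pass through $\scO_{X^0}(2)$, itself only a twisted complex of $\scO_{X^0}$ and $\scO_{X^0}(1)$) is precisely the kind of statement for which no exact-triangle technology is invoked or established here; and since, as you correctly observe, the target algebra is non-formal, no amount of cohomology-level bookkeeping can substitute for pinning down $\m_3$.

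For contrast, the paper's actual proof of \pref{th:fuk0} (from which \pref{th:hms0} follows) bypasses $L_0,L_1$ and \pref{th:hms} entirely: it observes that $S_0\cup S_1$ is exact and that $S_0,S_1$ intersect cleanly along $\bS^1\sqcup\bS^1$, takes Abouzaid's cellular cochain model for cleanly intersecting exact Lagrangians (built from the Hopf decomposition of $\bS^3$ into two copies of $\bD^2\times\bS^1$), and runs Merkulov's homotopy transfer with an explicit contraction $\mathrm{Q}$ to compute all $\m_3$'s and to show $\m_n=0$ for $n\ge 4$; the answer is then matched term by term against the dimer-model description of $\mathrm{End}(\scO_E\oplus\scO_E(-1)[1])$. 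This is where the nonvanishing higher products that encode the potential are actually produced. The paper does sketch an alternative argument in the spirit of yours (embedding everything into $\scWtilde$ and using that $L_0\oplus L_1$ is almost exceptional to get an orthogonal decomposition forcing $S_i\in D^b\scW$), but explicitly conditions it on \pref{cj:PD}. So either you supply an independent computation of the Floer $A_\infty$-structure on $S_0\oplus S_1$ (as the paper does), or you must prove the surgery/cone identification together with the existence of the mixed $A_\infty$-category --- neither of which your proposal does.
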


This paper is organized as follows:
We review the construction of the SYZ mirror
for the smoothed conifold from \cite{Abouzaid-Auroux-Katzarkov_LFB}
in \pref{sc:SYZ_construction}.
In \pref{sc:lag_path},
we discuss the construction of Lagrangian submanifolds
in $Y^0$
from paths on the $z$-plane.
In \pref{sc:SYZ_transform},
we recall the definition of the SYZ transform
from \cite{Arinkin-Polishchuk_FCFT,Leung-Yau-Zaslow_SLHYM}
and prove Theorems \ref{th:SYZ} and \ref{th:SYZ0}.
In \pref{sc:coh},
we give an explicit description
of the derived category of coherent sheaves
on the resolved conifold.
In \pref{sc:fuk},
we study the wrapped Fukaya category of $Y^0$
and prove \pref{th:hms}.
In \pref{sc:fuk0},
we study $A_\infty$-operations
on vanishing cycles in $Y^0$
and prove \pref{th:hms0}.
In \pref{sc:immersed},
we study Floer cohomology of immersed Lagrangian $S^2 \times S^1$.
In \pref{sc:small},
we discuss extension of the main results of this paper
to more general small toric Calabi-Yau 3-folds.

%The results in this paper generalize naturally
%to the case
%when $Y^0$ is the complete intersection in $\bCx \times \bC^4$
%defined by
%\begin{align*}
%\begin{split}
% u_1 v_1 &= (z-a_1)\cdots(z-a_k),\\
% u_2 v_2 &= (z-b_1)\cdots(z-b_l).
%\end{split}
%\end{align*}
%The mirror is the complement of a divisor
%in the total space of
%$\scO_{\bX}(- k \vecx_1) \oplus \scO_{\bX}(- l \vecx_2)$,
%where $\bX = \bX_{k,l}$ is the toric stack of dimension one
%with two orbifold points of weight $k$ and $l$
%and no generic stabilizer. We will discuss this generalization in more detail in Section~\ref{sc:small}.

{\em Acknowledgment} :
K.~C. is supported by a grant from the Research Grants Council of the Hong Kong Special
Administrative Region, China (Project No.~CUHK404412).
D.~P. would like to thank Denis Auroux and Kevin Lin for their very patient explanation of wall-crossing formulas.
K.~U. is supported by JSPS Grant-in-Aid for Young Scientists
No.~24740043.

%\section{SYZ transforms of Lagrangian submanifolds}
% \label{sc:SYZ}
%
%We prove \pref{th:SYZ} and \pref{th:SYZ0} in this section.

\section{The construction of the SYZ mirror}
 \label{sc:SYZ_construction}

Recall that $Y^0$ is given by the complete intersection
\begin{align}
\begin{split}
 u_1 v_1 &= z - a, \\
 u_2 v_2 &= z - b
\end{split}
\end{align}
in $\bCx \times \bC^4$, where $a$ and $b$ are distinct negative real numbers. Without loss of generality, we assume that $a<b$.
To construct the mirror of $Y^0$, it is also convenient to regard $Y^0$ as the complement of the anticanonical divisor
$$
 H = \{ (z,u_1,v_1,u_2,v_2) \in Y \mid z = 0 \}
$$
in
$$
 Y = \{ (z,u_1,v_1,u_2,v_2) \in \bC^5 \mid u_1v_1 = z - a,\ u_2v_2 = z - b \}.
$$
In the following, we shall briefly review the construction of the mirror for $Y^0$ (or $Y$ with respect to the divisor $H$) following the SYZ approach in \cite{Auroux_MSTD, Auroux_SLFWCMS}; note that our example is a special case of a much more general construction in \cite[Section 11]{Abouzaid-Auroux-Katzarkov_LFB} (see also \cite[Section 4]{Chan-Lau-Leung12} and \cite[Section 5]{Chan_HMSARTD}).

First of all, there is a Hamiltonian $T^2$-action on $(Y^0,\omega)$:
$$
 (e^{is},e^{it})\cdot (z,u_1,v_1,u_1,v_2) = \lb z, e^{is}u_1, e^{-is}v_1, e^{it}u_2, e^{-it}v_2 \rb
$$
whose moment map is given by
$$
 \phi(z,u_1,v_1,u_1,v_2) = \lb \frac{1}{2} \lb |u_1|^2 - |v_1|^2 \rb, \frac{1}{2} \lb |u_2|^2 - |v_2|^2 \rb \rb.
$$
This action extends to $Y$ and preserves the anticanonical divisor $H$.
The SYZ fibration is given by
\begin{align} \label{eq:SYZ_fib}
\begin{array}{cccc}
 \rho : & Y^0 & \to & B := \bR_{>0}\times\bR^2 \\
  & \vin & & \vin \\
  & (z, u_1, v_1, u_2, v_2) & \mapsto &
   \lb |z|, \phi(z, u_1, v_1, u_2, v_2) \rb.
\end{array}
\end{align}
Note that we use $|z|$ here instead of $\log |z|$ and the base is $\bR_{>0}\times\bR^2$ instead of $\bR^3$. This harmless change is more convenient for us because we would like to extend this map to $\rho: Y \to \Bbar := \bR_{\geq0}\times\bR^2$ so that the preimage of the boundary $\{0\}\times\bR^2$ is precisely given by the hypersurface $H$.

Let $\vec{\lambda} = (\lambda_1,\lambda_2)\in \bR^2$ and $r\in \bR_{>0}$. We denote by
$$
 L_{r,\vec{\lambda}} = \{(z, u_1, v_1, u_2, v_2) \in Y \mid |z| = r,\ \phi(z, u_1, v_1, u_2, v_2) = \vec{\lambda}\}
$$
the fiber of $\rho$ over $(r,\vec{\lambda})\in B = \bR_{>0}\times \bR^2$. Consider the double conic fibration $f:Y \to \bC$ given by projection to the $z$-coordinate. Then $L_{r,\vec{\lambda}}$ can be viewed as a fibration, via $f$, over the circle $C_r = \{z\in \bC^\times \mid |z| = r\}$ with generic fiber $T^2$.
The fiber $L_{r,\vec{\lambda}}$ is singular precisely when
\begin{enumerate}
\item[(i)] $r = |a|$ and $\vec{\lambda}=(0,\lambda_2)$; or
\item[(ii)] $r = |b|$ and $\vec{\lambda} = (\lambda_1,0)$;
\end{enumerate}
so the discriminant loci of $\rho$ is the disjoint union of two skew lines
\begin{align} \label{eq:SYZ_disc}
 \Gamma = \lc (|a|, 0, \lambda_2) \in B \mid \lambda_2\in\bR \rc \cup \lc (|b|, \lambda_1, 0)\in B \mid \lambda_1\in\bR \rc.
\end{align}
We denote by $B^\sm := B \setminus \Gamma$ the smooth loci of the base of the SYZ fibration.
When $L_{r,\vec{\lambda}}$ is smooth, it is a \emph{special} Lagrangian torus in $Y^0$ with respect to the symplectic form $\omega$ and the holomorphic volume form
$$
 \Omega = d \log z \wedge d \log u_1 \wedge d \log u_2.
$$

Let
$$
 L'_{r,\lambda_1} = \{(u_1,v_1)\in \bC^2 \mid |u_1v_1+a|=r,\ |u_1|^2-|v_1|^2 = 2\lambda_1\},
$$
and
$$
 L''_{r,\lambda_2} = \{(u_2,v_2)\in \bC^2 \mid |u_2v_2+b|=r,\ |u_2|^2-|v_2|^2 = 2\lambda_2\}.
$$
Via the map $f':\bC^2\to\bC$ given by $(u_1,v_1)\mapsto u_1v_1+a$, we can think of $L'_{r,\lambda_1}$ as an $S^1$-bundle over the circle $C_r$. Similarly, via the map $f'':\bC^2\to\bC$ given by $(u_2,v_2)\mapsto u_2v_2+b$, $L''_{r,\lambda_2}$ can be thought of as an $S^1$-bundle over the same circle. Then $L_{r,\vec{\lambda}}$ is nothing but the fibred product

\[
\begin{CD}
  L_{r,\vec{\lambda}}=L'_{r,\lambda_1}\times_\bC L''_{r,\lambda_2} @> >> L''_{r,\lambda_2}  \\
  @V VV @V f'' VV \\
  L'_{r,\lambda_1} @> f'>> \bC.
\end{CD}
\]\\

To construct the SYZ mirror, we compute the {\em superpotential}
\cite{Cho-Oh,Fukaya-Oh-Ohta-Ono,FOOO_toric_I,Abouzaid-Auroux-Katzarkov_LFB}
which counts Maslov index two holomorphic discs in $Y$ (caution: not $Y^0$!)
with boundary on the Lagrangian torus fibers of $\rho$.
Our arguments are along the same lines as those in \cite{Auroux_MSTD, Auroux_SLFWCMS}.

By composing a disc with the holomorphic map $f:Y \to \bC$ and applying the maximal principle, we see that Maslov index two discs in $(Y, L_{r,\vec{\lambda}})$ are sections of $f$ over the disc bounded by $C_r$. Omitting subscripts for convenience, for $r$ large, the Lagrangian $L = L'\times_\bC L''$ is Hamiltonian isotopic to a Lagrangian of the form
$$
 (S^1(r_1)\times S^1(r_2)) \times_\bC (S^1(r_3)\times S^1(r_4)).
$$
The $S^1(r_1)\times S^1(r_2)$ component bounds two families of Maslov index two discs, which we will denote by $\beta_1'$ and $\beta_2'$, and the $S^1(r_3)\times S^1(r_4)$ component also bounds two families of Maslov index two discs, which we will denote by $\beta_1''$ and $\beta_2''$. Therefore $L'\times_\bC L''$ bounds four families of Maslov index two discs which we will denote by $(\beta_i',\beta_j'')$ for $i,j=1,2$.

Let $z_1,z_2,z_3,z_4$ be the weights corresponding respectively to
$$
 (\beta_1',\beta_1''), (\beta_1',\beta_2''), (\beta_2',\beta_1''), (\beta_2',\beta_2'').
$$
Since
$$
 (\beta_1',\beta_2'') + (\beta_2',\beta_1'') - (\beta_1',\beta_1'') = (\beta_2',\beta_2''),
$$
we have the relation
$$
z_2z_3/z_1 = z_4.
$$
It follows that the superpotential for large $r$ is given by
$$
 W = z_1 + z_2 + z_3 + \frac{z_2z_3}{z_1}.
$$

\begin{remark}
Note that this is exactly the Hori-Vafa superpotential corresponding to the singular toric variety
$$
 Z = \{(u_1,v_1,u_2,v_2)\in\bC^4 \mid u_1v_1 - u_2v_2 = 0\}.
$$
In a sense, we can think of $r$ large as corresponding to some `toric limit'.
\end{remark}

Using the description of $L=L_{r,\vec{\lambda}}$ as a fibred product, it is easy to see that
\begin{proposition}
A Lagrangian torus fiber $L_{r,\vec{\lambda}}$ bounds a nontrivial Maslov index zero holomorphic disc in $Y$ if and only if $r = |a|$ or $r = |b|$. In other words, there are exactly two walls.
\end{proposition}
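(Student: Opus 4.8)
The plan is to study holomorphic discs in $Y$ with boundary on $L_{r,\vec{\lambda}}$ by projecting along the conic fibration $f : Y \to \bC$, $(z,u_1,v_1,u_2,v_2)\mapsto z$, and to combine this with the fact that the holomorphic volume form $\Omega$ has a simple pole along $H = f^{-1}(0)$. The latter yields the standard relation $\mu(\beta) = 2\,(\beta\cdot H)$ between the Maslov index of a disc class $\beta$ and its intersection number with the anticanonical divisor $H$. Since the boundary of any disc lies on $L_{r,\vec{\lambda}}\subset Y^0 = Y\setminus H$, positivity of intersections gives $\beta\cdot H\ge 0$, so a disc has Maslov index zero if and only if its image is disjoint from $\{z=0\}$. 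It therefore suffices to locate the fibers bounding a nonconstant disc that avoids $H$.

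First I would treat a holomorphic disc $u:(D,\partial D)\to(Y,L_{r,\vec{\lambda}})$ with $f\circ u$ nonconstant. Then $f\circ u:(D,\partial D)\to(\bC,C_r)$ is holomorphic with boundary on the circle $C_r=\{|z|=r\}$, so by the maximum principle its image lies in $\{|z|\le r\}$, and by the argument principle the number of preimages of any interior point, in particular of $z=0$, equals the boundary degree $d\ge 1$; hence $\beta\cdot H = d\ge 1$ and $\mu(\beta)\ge 2$. Consequently a Maslov index zero disc must satisfy $f\circ u\equiv z_0$ for some $z_0$ with $|z_0|=r$, so $u$ maps into the single conic fiber $F_{z_0}=\{u_1v_1=z_0-a,\ u_2v_2=z_0-b\}$. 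When $z_0\ne a,b$ this fiber is $\bCx\times\bCx$ and $L_{r,\vec{\lambda}}\cap F_{z_0}$ is a product of two circles $\{|u_1|=c_1\}\times\{|u_2|=c_2\}$; applying the maximum principle to each coordinate and to its reciprocal forces $u$ to be constant. Thus a nontrivial Maslov index zero disc can exist only if some $z_0\in C_r$ equals $a$ or $b$, which happens precisely when $r=|a|$ or $r=|b|$.

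Conversely, for $r=|a|$ and $\vec{\lambda}$ off the discriminant (so $\lambda_1\ne 0$) I would produce the disc explicitly: over $z_0=a$ the first conic degenerates to $\{u_1v_1=0\}$, and on the line $\{v_1=0\}$ when $\lambda_1>0$ (or $\{u_1=0\}$ when $\lambda_1<0$) the fiber meets it in the circle $\{|u_1|^2=2\lambda_1\}$, which bounds the holomorphic disc $\{(u_1,0): |u_1|^2\le 2\lambda_1\}$ inside $F_a$. This disc lies over $z_0=a\ne 0$, hence is disjoint from $H$, is nonconstant, and has Maslov index zero, giving one of the two walls; the case $r=|b|$ is identical via the second conic. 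The main technical points to secure are the degree estimate $d\ge 1$, which guarantees that nonconstant projections genuinely cover $z=0$, and the identity $\mu(\beta)=2\,(\beta\cdot H)$, which is what converts the analytic condition of avoiding $H$ into the topological statement $\mu(\beta)=0$; once these are in place, the classification of walls reduces to the elementary maximum-principle dichotomy above.
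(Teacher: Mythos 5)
Your argument is correct and is precisely the standard Auroux-style argument that the paper has in mind when it asserts the claim ``is easy to see'' from the fibred product description of $L_{r,\vec{\lambda}}$: the relation $\mu(\beta)=2(\beta\cdot H)$ together with the maximum and argument principles forces any Maslov index zero disc into a single conic fiber of $f$, and only the degenerate fibers over $z=a$ and $z=b$ (i.e.\ $r=|a|$ or $r=|b|$) carry nonconstant such discs. The explicit thimble-type discs $\{v_1=0,\ |u_1|^2\le 2\lambda_1\}$ (resp.\ $\{u_1=0\}$ for $\lambda_1<0$, and their analogues in the second factor) correctly establish the converse, so there is no gap.
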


Recall that we have $a<b<0$ so that $|b| < |a|$. Let $\alpha'$ denote the Maslov index zero disc bounded by the $L'$ factor and $\alpha''$ the one bounded by the $L''$ factor. Also let $w_1$ and $w_2$ be the corresponding weights. When $r$ is small, the Lagrangian torus $L$ is a fibred product of Chekanov tori $L'\times_\bC L''$, with each factor bounding one family of discs $\beta_0'$ and $\beta_0''$ respectively. So $L$ bounds one family of discs with relative homotopy class $(\beta_0',\beta_0'')$. Let $u$ be the weight corresponding to $(\beta_0',\beta_0'')$. We conclude that, for small $r$, the superpotential is simply given by
$$
 W = u.
$$

To analyze the wall-crossing for counting of Maslov index two discs, we first assume that $\lambda_1 > 0$. As $r$ increases and passes through the first wall $r=|b|$, the class $(\beta_0',\beta_0'')$ deforms naturally to
$$
(\beta_1',\beta_0''),
$$
but it may also pick up the Maslov index zero disc $\alpha'$ and deform into
$$
(\beta_1'+\alpha',\beta_0'') = (\beta_2',\beta_0'').
$$
Similarly, assuming $\lambda_2>0$, as $r$ passes through the second wall $r=|a|$, $(\beta_i',\beta_0'')$ naturally deforms to $(\beta_i',\beta_1'')$ but it may also deform to $(\beta_i',\beta_1''+\alpha'') = (\beta_i',\beta_2'')$.

Hence, the wall-crossing formula for the first wall reads
\begin{equation}\label{eq:wc-I}
 u \mapsto \hat{z}_1 (1 + w_1),
\end{equation}
where $w_1 = \hat{z}_3 / \hat{z}_1$, and the wall-crossing formulas for the second wall are given by
\begin{equation}\label{eq:wc-II}
 \hat{z}_1 \mapsto z_1 (1+w_2),\ \hat{z}_3 \mapsto z_3 (1+w_2),
\end{equation}
where $w_2 = z_2 / z_1$. Composing these formulas gives
$$
 u \mapsto z_1 + z_3 + z_2 + \frac{z_2z_3}{z_1},
$$
so the wall-crossing formulas do make the superpotential for $r$ small agree with that for $r$ large.

\begin{remark}
We comment on transversality and orientation for the above moduli spaces of Maslov index two discs.

For transversality, we may apply the argument in \cite[Lemma 7]{AurouxInfinite}. We briefly explain how this argument carries over to our situation. In our situation, explicit calculation shows that the Maslov index two discs avoid the fixed point locus of the natural $T^2$ action on the total space.  Therefore, for any map $u$, we have a short exact sequence
$$ 0 \to u^*\mathcal{L} \to u^*TX \to u^*TX/\mathcal{L} \to 0,$$
where $\mathcal{L}$ is a trivial rank two bundle with real boundary conditions. Let $\bar{u}$ denote the corresponding map to $\mathbb{C}$, which as remarked above is a section over a disc. It follows that surjectivity of the $\bar{\partial}$ operator on sections of $u^*TX$ with boundary conditions $u^*_{|S^1}(TL)$ reduces to that of the $\bar{\partial}$ operator on the quotient bundle $u^*TX/u^*\mathcal{L} \cong \bar{u}^*T\mathbb{C}$ with the corresponding boundary conditions. The surjectivity of the latter operator is well-known.

Similarly, the argument in the proof of \cite[Corollary 8]{AurouxInfinite} (in particular its 4th paragraph, which in turn rely on constructions in \cite[Chapter 8]{Fukaya-Oh-Ohta-Ono} or \cite[Proposition 5.2]{Cho_CT}) adapts directly to determine the signs.
\end{remark}

Although we have assumed that $\lambda_1>0$ and $\lambda_2>0$, the above calculations also work for other cases when $\lambda_1<0$ or $\lambda_2<0$.
So letting $z = \hat{z}_1$ and $v = z_1^{-1}$, the {\em uncompleted} SYZ mirror $\Yv_0$ of the complement $Y^0 = Y \setminus H$ is given by the union of three charts $U_1$, $U_2$ and $U_3$, all algebraically equivalent to $(\bCx)^3$ and equipped with coordinates $(u, w_1, w_2)$, $(z, w_1, w_2)$ and $(v, w_1, w_2)$ respectively. The wall-crossing formulas then tell us that these charts are glued by
\begin{align*}
u \to z(1 + w_1),\ w_1 \to w_1,\ w_2 \to w_2
\end{align*}
from $U_1$ to $U_2$, and by
\begin{align*}
z \to v^{-1}(1 + w_2),\ w_1 \to w_1,\ w_2 \to w_2
\end{align*}
from $U_2$ to $U_3$.

To have a more concrete description of $\Yv_0$, let us consider the singular variety
\begin{align*}
 Z = \{ (u, v, w_1, w_2) \in \bC^4 \mid u v = (1 + w_1)(1 + w_2) \}
\end{align*}
and its crepant resolution
\begin{align*}
 X &= %\operatorname{tot}
  \scO_{\bP^1}(-1) \oplus \scO_{\bP^1}(-1).
\end{align*}
Let $X^0 = X \setminus D$, where $D$ is the pull-back of the divisor
$
 \{ w_1 w_2 = 0 \}
$
on $Z$ along the crepant resolution $\varphi : X \to Z$. Then we can write
$$
 X^0 = \{ (u,v,w_1,w_2,[x_1:x_2]) \in \bC^2\times(\bCx)^2\times\bP^1 \mid ux_2 = (1 + w_1)x_1,\ (1 + w_2)x_2 = vx_1 \}.
$$

Observe that $U_1$ can be embedded into $X^0$ as the chart where $u \neq 0$ and with coordinates $(u, w_1, w_2)$. Similarly, $U_2$ is the chart of $X^0$ where $x_1/x_2 \neq 0$ and with coordinates $(z := x_1/x_2, w_1, w_2)$, while $U_3$ is the chart of $X^0$ where $v \neq 0$ and with coordinates $(v, w_1, w_2)$. It is clear that these charts satisfy the above gluing relations. Now we claim that the union of these charts is precisely given by the complement $X^0 \setminus (C_1 \cup C_2)$ where
\begin{align*}
C_1 & = \{(u,v,w_1,w_2,[x_1:x_2]) \in X^0 \mid u = v = 0, w_1 = -1, [x_1:x_2] = [1:0]\},\\
C_2 & = \{(u,v,w_1,w_2,[x_1:x_2]) \in X^0 \mid u = v = 0, w_2 = -1, [x_1:x_2] = [0:1]\}.
\end{align*}
To see this, just notice that any point with $u \neq 0$ or $v\neq 0$ is covered by $U_1$ and $U_3$ respectively, and any point whose $\bP^1$ coordinate is not equal to $[1:0]$ or $[0:1]$ is covered by $U_2$.

Hence we conclude that the uncompleted SYZ mirror of $Y$
with respect to the anticanonical divisor $H$ is the \emph{Landau-Ginzburg model} $(\Yv_0,W)$ with
total space\footnote{While the three charts of the uncompleted mirror $\Yv_0$ and their gluing are correctly described in the published version of this paper, the explicit formula for $\Yv_0$ there is incorrect and should be modified as described here. We thank Luis Diogo for discussions which lead us to the discovery of this error.}
$$
 \Yv_0 = X^0 \setminus (C_1 \cup C_2)
$$
and superpotential
$$
 W = u.
$$
Note that $\Yv_0$ is an open subvariety of $X^0$, and the superpotential $W$ naturally extends to $X^0$, so the above argument also gives a natural completion of this Landau-Ginzburg model:
\begin{proposition}[{\cite[Section 11]{Abouzaid-Auroux-Katzarkov_LFB}}]
The Landau-Ginzburg model $(X^0,W)$ is the completed, corrected SYZ mirror to $Y$ with respect to the anticanonical divisor $H$. In particular, $X^0$ is the completed, corrected SYZ mirror to $Y^0 = Y \setminus H$.
\end{proposition}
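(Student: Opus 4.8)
The plan is to recognize the statement as the specialization of \cite[Section 11]{Abouzaid-Auroux-Katzarkov_LFB} to our example, so that the only work is to assemble the pieces already in hand. The wall-crossing analysis above has produced the \emph{uncompleted}, instanton-corrected mirror $(\Yv_0, W)$, whose corrections are carried by the factors $1 + w_1$ and $1 + w_2$ coming from the two families $\alpha'$ and $\alpha''$ of Maslov index zero discs. It remains to (i) realize $\Yv_0$ as a dense open affine subvariety of $X^0$, (ii) observe that $W = u$ extends to $X^0$, and (iii) argue that $X^0$ is precisely the \emph{canonical} completion dictated by the construction; the conclusion for $Y$ relative to $H$ then gives the one for $Y^0$ by discarding the superpotential.

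For (i) I would use the shared coordinates $u, v, w_1, w_2$ and define the morphism $\Yv_0 \to X^0$ by sending $(u, v, \hat{z}_1, w_1, w_2)$ to $(u, v, w_1, w_2, [x_1 : x_2])$ with $[x_1 : x_2] = [\, 1 + w_1 : v \,] = [\, u : 1 + w_2 \,]$, the two expressions agreeing because $uv = (1 + w_1)(1 + w_2)$ holds on $\Yv_0$. Working in the two standard charts $x_1 \neq 0$ and $x_2 \neq 0$ of $X = \scO_{\bP^1}(-1) \oplus \scO_{\bP^1}(-1)$, in which the defining relations cut out affine $3$-spaces with coordinates $(u,\, x_2/x_1,\, w_1)$ and $(v,\, x_1/x_2,\, w_2)$, one recovers $\hat{z}_1$ as $u/(1 + w_1) = (1 + w_2)/v$ away from the locus $u = v = 0$, $w_1 = w_2 = -1$ lying over the node of $Z$, and checks that the morphism is an open immersion onto the complement of a union of toric divisors. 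I expect the delicate bookkeeping here to be the behavior over the node: the exceptional curve of $\varphi : X \to Z$ is swept out by the weight coordinate $\hat{z}_1$, and one must verify that the completion is exactly what supplies the missing point at infinity of this $\bP^1$ together with the remaining toric boundary strata, so that openness is genuinely checked chart by chart rather than in a single affine piece.

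Step (ii) is immediate, since $u$ is one of the ambient coordinate functions on $\bC^2 \times (\bCx)^2 \times \bP^1$ and hence restricts to a global regular function on $X^0$ extending $W$. The conceptual heart of the proposition, and the step I expect to be the main obstacle, is (iii): showing that $X^0$ is the canonical completion rather than merely one completion containing $\Yv_0$. I would argue in two stages. First, the affine completion is forced to be $Z = \{ uv = (1 + w_1)(1 + w_2) \}$, which is both the closure of the image of $\Yv_0$ under projection to $(u, v, w_1, w_2)$ and the object whose conic-bundle structure over the $(w_1, w_2)$-plane mirrors the double conic fibration $f : Y \to \bC$; the single quadric defining $Z$ reflects the binomial relation $z_2 z_3 / z_1 = z_4$ among the four disc weights. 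Second, of the two small resolutions of the node, the construction selects $X$ and not its flop: this choice is forced by the ordering $a < b$ of the two walls, equivalently by the order in which the two wall-crossing transformations were composed, which is what makes $\hat{z}_1$ adapted to one of the two charts. I would make both points precise by appealing to the general theorem of \cite[Section 11]{Abouzaid-Auroux-Katzarkov_LFB}, which constructs the completed corrected mirror as the fibrewise compactification of the semiflat mirror glued by the wall-crossing transformations, and then verifying that its output agrees with our explicit $X^0$ on chart overlaps. Finally, the ``in particular'' clause is formal: $W = u$ is the disc potential that detects the anticanonical divisor $H = \{ z = 0 \}$, whose $\rho$-preimage is the boundary $\{0\} \times \bR^2$ of $\Bbar$, so forgetting $W$ turns the mirror statement for the pair $(Y, H)$ into the mirror statement for the open Calabi-Yau $Y^0 = Y \setminus H$.
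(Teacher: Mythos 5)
Your proposal follows the same route as the paper: the paper likewise records only that $\Yv_0$ sits inside $X^0$ as an open affine piece and that $W=u$ extends, and then delegates the assertion that $(X^0,W)$ is \emph{the} completed, corrected mirror to \cite[Section 11]{Abouzaid-Auroux-Katzarkov_LFB}, exactly as your step (iii) does. The additional detail you supply --- the explicit formula for $[x_1:x_2]$, the chart-by-chart check over the node, and the observation that the ordering of the two walls selects the small resolution $X$ rather than its flop --- goes beyond what the paper writes down but is consistent with its (equally terse) treatment.
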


\begin{remark}
It is natural to speculate that the `missing points' $X^0 \setminus \Yv_0$ correspond to singular fibers $L_u := \rho^{-1}(u)$ of the SYZ fibration
$\rho : Y^0 \to \bR^3$, where $u \in \Gamma$ is a point in the discriminant locus. In Section \ref{sc:immersed},
we will try to justify this speculation by some Floer-theoretic computations.
\end{remark}

\section{Lagrangian submanifolds fibred over paths}
 \label{sc:lag_path}

We introduce a class of Lagrangian submanifolds in $Y^0$ which are fibred over paths in the $z$-plane. Let us start with non-compact Lagrangian submanifolds.

\begin{definition}
A smooth path $\gamma : \bR \to \bCx$ on the $z$-plane such that $\lim_{t \to -\infty} |\gamma(t)| = 0$ and $\lim_{t \to \infty} |\gamma(t)| = \infty$ is said to be {\em admissible} if it intersects the interval $\epsilon := [a, b]$ transversally and does not intersect the discriminant $\Delta = \{ a, b \}$ of the double conic fibration $f: Y^0 \to \bCx$. The {\em winding number} $w(\gamma)$ of an admissible path $\gamma$ is defined as its intersection number with $\epsilon$. We choose the orientation so that a path intersecting $\epsilon$ transversally once and in the counterclockwise direction contributes $+1$ to the intersection number.
\end{definition}

Let $\gamma : \bR \to \bCx\setminus \Delta$ be an admissible path. The symplectic fibration $f:Y^0 \to \bCx$ induces a natural horizontal distribution given by symplectic orthogonal to the fiber. Parallel transport with respect to this horizontal distribution gives symplectomorphisms between the smooth fibers of $f$. A 3-dimensional submanifold $L\subset f^{-1}(\gamma)$ is Lagrangian if and only if it is swept by the parallel transport of a Lagrangian cycle in a fiber along $\gamma$ (cf. \cite[Section 5.1]{Auroux_MSTD}). Therefore, by fixing $t_0\in \bR$ and choosing a Lagrangian cycle $A_0$ in the double conic fiber
$$
 f^{-1}(\gamma(t_0)) = (f')^{-1}(\gamma(t_0)) \times (f'')^{-1}(\gamma(t_0)),
$$
one can construct a Lagrangian submanifold $L_{\gamma,A_0}\subset Y$ as the submanifold in $f^{-1}(\gamma)$ swept out by the parallel transport of $A_0$ along $\gamma$.

Notice that the winding number $w(\gamma)$
and the Hamiltonian isotopy class of the Lagrangian submanifold $L_\gamma$
are invariant when we deform $\gamma$ in a fixed isotopy class
relative to the boundary conditions.
%$\lim_{t \to -\infty} |\gamma(t)| = 0$ and $\lim_{t \to \infty} |\gamma(t)| = \infty$. (If we think of $\bCx$ as a cylinder, then we are deforming the admissible path $\gamma$ so that its ends keep lie on the two boundary circles.)
In particular, we can always deform $\gamma$
so that $\gamma(t)$ lies on the positive real axis for $t<-T$
for some fixed $T>0$.
Then we consider the direct product
\begin{align*}
A_t:=\{(\gamma(t),u_1,v_1,u_2,v_2) \in f^{-1}(\gamma(t)) \mid u_1,v_1,u_2,v_2 \in \bR\},
\end{align*}
of the real loci (see \pref{fg:fiber_wrapping0})
in the factors $(f')^{-1}(\gamma(t))$ and $(f'')^{-1}(\gamma(t))$
of the double conic fiber $f^{-1}(\gamma(t))$ for each $t<-T$.
The Lagrangian cycle $A_t$ is invariant under symplectic parallel transport for $t<-T$.
We then set
$$
 L_\gamma := L_{\gamma,A_t},
$$
i.e. the submanifold in $Y^0$ swept out by parallel transport of $A_{t_0}$
(for some fixed $t_0<-T$).
This defines a Lagrangian submanifold in $(Y^0,\omega)$
homeomorphic to $\bR^3$.
%Note that the cycle $A_t$ is being twisted by Dehn twists in vanishing cycles (i.e. the equator $\vec{\lambda}=0$ in a fiber of $f:Y^0\to\bC^\times$) as one goes along $\gamma$.

\begin{definition}
An admissible path $\gamma : \bR \to \bCx \setminus \Delta$
is said to be {\em strongly admissible} if\begin{itemize} \item $|\gamma| : \bR \to \bR_{>0}$ is a strictly increasing function.
\item  The path agrees with a straight line near $z=0$ and outside of some compact set.
\end{itemize}
\end{definition}

\begin{remark} The second condition above is necessary for the purposes of defining wrapped Floer cohomology and in particular for the maximum principle of the appendix to hold. \end{remark}

\begin{proposition}\label{pr:section_SYZ}
Let $\gamma : \bR \to \bCx\setminus\Delta$ be a strongly admissible path. Then the Lagrangian submanifold $L_\gamma$ we define above is a section of the SYZ fibration $\rho: Y^0 \to B$.
\end{proposition}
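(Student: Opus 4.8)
The plan is to prove that $\rho|_{L_\gamma} : L_\gamma \to B$ is a bijection covered by a smooth section, i.e.\ that $L_\gamma$ meets every fiber $L_{r,\vec{\lambda}} = \rho^{-1}(r,\vec{\lambda})$ in exactly one point. The natural strategy is to split $\rho$ into its two constituents: the radial coordinate $r = |z|$, governed by the double conic fibration $f : Y^0 \to \bCx$, and the moment map $\phi = (\lambda_1,\lambda_2)$ of the Hamiltonian $T^2$-action. I would handle these two ingredients separately and then recombine them.

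First, the radial direction. Since $\gamma$ is strongly admissible, the function $|\gamma| : \bR \to \bR_{>0}$ is strictly increasing, and by the admissibility limits it runs from $0$ to $\infty$; hence it is a bijection onto $\bR_{>0}$ whose inverse $t(r) := |\gamma|^{-1}(r)$ is smooth (after, if necessary, reparametrizing $\gamma$ within its isotopy class, which does not affect $L_\gamma$). Every point of $L_\gamma$ has its $z$-coordinate on the image of $\gamma$, so the constraint $|z| = r$ forces $z = \gamma(t(r))$: the $z$-coordinate on $L_\gamma$ is determined uniquely by $r$. This reduces the problem to the single fiber $f^{-1}(\gamma(t(r)))$, inside which $L_\gamma$ restricts to the parallel-transported cycle $A_{t(r)}$, and it remains to show that $A_{t(r)}$ meets the level set $\phi^{-1}(\vec{\lambda})$ in exactly one point.

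The heart of the argument is that $\phi$ is invariant under the symplectic parallel transport of $f$. The generating vector fields $\xi_1,\xi_2$ of the $T^2$-action are vertical for $f$, because the action fixes the $z$-coordinate, while the horizontal distribution is by definition the symplectic orthogonal to the vertical tangent spaces; hence for every horizontal vector $V$ the moment map condition gives $d\phi_i(V) = \omega(\xi_i,V) = 0$, so each $\phi_i$ is constant along horizontal curves. Consequently the parallel transport $P_{t_0,t} : f^{-1}(\gamma(t_0)) \to f^{-1}(\gamma(t))$ satisfies $\phi \circ P_{t_0,t} = \phi$, carries $\phi^{-1}(\vec{\lambda})$ to itself, and thus $A_t = P_{t_0,t}(A_{t_0})$ meets $\phi^{-1}(\vec{\lambda})$ in the same number of points as $A_{t_0}$ does. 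It therefore suffices to verify the claim for the initial cycle $A_{t_0}$ at some $t_0 < -T$, where $\gamma(t_0)$ is a small positive real number: there $A_{t_0}$ is the connected real-locus branch in the double conic $(f')^{-1}(\gamma(t_0)) \times (f'')^{-1}(\gamma(t_0))$, and on each factor $\{uv = c\}$ with $c>0$ the positive real branch, parametrized by $u = s > 0$, meets each $S^1$-orbit exactly once because $\frac{1}{2}(s^2 - c^2/s^2)$ is a strictly increasing bijection $\bR_{>0} \to \bR$; taking the product yields a single intersection point with each $T^2$-orbit $\phi^{-1}(\vec{\lambda})$.

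Combining the two steps, for each $(r,\vec{\lambda}) \in B$ there is exactly one point of $L_\gamma$ lying over it, and the inverse assignment $(r,\vec{\lambda}) \mapsto P_{t_0,t(r)}(A_{t_0} \cap \phi^{-1}(\vec{\lambda}))$ is manifestly smooth, so it is a smooth section whose image is $L_\gamma$. The step I expect to be the main obstacle is the invariance lemma for $\phi$ under parallel transport: one must check carefully that the $T^2$-action preserves the horizontal distribution (equivalently, that the $\xi_i$ are vertical and $\omega$-orthogonal to all horizontal vectors) and that this persists as $\gamma$ winds around and crosses the interval $\epsilon$, which is legitimate precisely because $\gamma$ avoids $\Delta = \{a,b\}$ and the conic fibers never degenerate. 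The remaining points—smoothness of $t(r)$, monotonicity of the fiberwise moment map, and freeness of the $T^2$-action on smooth fibers—are routine.
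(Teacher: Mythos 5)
Your proof is correct and follows essentially the same route as the paper: reduce to the fiber direction using the strict monotonicity of $|\gamma|$, and show that the injectivity of $\phi$ on the transported cycle $A_t$ persists along $\gamma$. The only (harmless) difference is that you derive $\phi\circ P_{t_0,t}=\phi$ directly from $d\phi_i(V)=\omega(\xi_i,V)=0$ for horizontal $V$, whereas the paper phrases the same point via the $T^2$-equivariance of the symplectic parallel transport.
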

\begin{proof}
The proof is essentially the same as that of \cite[Proposition 3.4]{Chan-Ueda_DTFHMSAN}. The restriction of the moment map $\phi$ to $A_t$ (for $t$ sufficiently small), which is just the direct product of the real loci (\pref{fg:fiber_wrapping0}), is injective. Since $T^2$ acts fiberwise and it acts by symplectomorphisms on $Y^0$, the symplectic parallel transport induces $T^2$-equivariant symplectomorphisms between fibers of $f$. So the restriction of $\phi$ to a parallel transport of $A_t$ remains injective. Together with the condition that $|\gamma(t)|$ is strictly increasing, we see that $L_\gamma$ is intersecting each fiber of the SYZ fibration $\rho:Y^0\to B$ at one point.
\end{proof}

\begin{remark}
Given a strongly admissible path $\gamma : \bR \to \bCx\setminus\Delta$, we can as well choose any Lagrangian cycle $A_0\subset f^{-1}(\gamma(t_0))$ such that $\phi|_{A_0}$ is an injective map, then the resulting Lagrangian submanifold $L_{\gamma,A_0}$ is also a section of the SYZ fibration.
\end{remark}

An example is given by the path
$$\gamma_0:\bR\to\bC^\times,\ t\mapsto e^t,$$
which runs through the whole positive real axis, which is obviously strongly admissible. The corresponding Lagrangian submanifold $L_0:=L_{\gamma_0}$ is simply the real locus in $Y$ which we choose as the {\em zero-section} of the SYZ fibration.

To construct compact Lagrangian submanifolds in $(Y^0,\omega)$,
we consider {\em bounded paths},
which are smooth paths $\sigma: [0,1] \to \bCx$
starting from the critical value $a$ of one conic fibration
and ending at the critical value $b$ of the other conic fibration.
The fiber product of the Lefschetz thimbles
of each conic fibrations
along a bounded path $\sigma$
gives a Lagrangian submanifold $L_\sigma$ of $Y^0$,
which is a $T^2$ fibration over the bounded path.
One $S^1$-factor collapses to a point on one end
and the other $S^1$-factor collapses to a point on the other end,
so that the total space $L_\sigma$ is homeomorphic to $S^3$.

\begin{definition}
We call a bounded path $\sigma : [0,1] \to \bCx $ going from $b$ to $a$ {\em strongly admissible} if $|\sigma| : [0,1] \to \bR_{>0}$ is a strictly increasing function and $\sigma$ intersects the interval $\epsilon^- := [-b, -a]$ transversally.
\end{definition}

As in \cite{Chan_HMSARTD},
in order to define the SYZ transform,
we need to choose a reference path $\sigma_0$,
relative to which we measure the winding numbers.
Since we have chosen the Lagrangian $L_0$
associated to the positive real axis $\gamma_0$
as the zero-section of the SYZ fibration,
and we would like the Floer cohomology
between the Lagrangians fibered over the two reference paths $\gamma_0$ and $\sigma_0$
to have the correct dimension,
we shall impose the condition
that the reference paths $\gamma_0$ and $\sigma_0$ intersect
transversally at one point (with the correct orientation).
For this reason,
we choose $\sigma_0$ to be the path corresponding to the Lagrangian 3-sphere $S_0$
as shown in \pref{fg:compact_lagrangians2}.

\begin{definition}
The {\em winding number} $w(\sigma)$ of a strongly admissible bounded path $\sigma : [0,1] \to \bCx$ going from $b$ to $a$ is defined to be the winding number of the concatenation of paths $\overline{\sigma}_0 \circ \sigma$ with respect to the counterclockwise isomorphism $\pi_1(\bC^*) \cong \mathbb{Z}$, where $\overline{\sigma}_0$ denotes the path $\sigma_0$ with reversed orientation.
\end{definition}

With this definition, the bounded path $\sigma_1$,
which corresponds to the Lagrangian 3-sphere $S_1$ in \pref{fg:compact_lagrangians2},
has winding number 1.

It is easy to see that the Lagrangian 3-sphere $L_\sigma$
associated with a strongly admissible bounded path $\sigma : [0,1] \to \bCx$
is fibred by $T^2$ over the line segment (the red line in \pref{fg:SYZ_base})
$$
 \ell_0 := (|b|,|a|) \times \{\vec{0}\}
$$
in the base $B$ of the SYZ fibration, and the $T^2$ fiber degenerates to an $S^1$ at both ends $(|a|,\vec{0})$ and $(|b|,\vec{0})$.

\section{SYZ transforms}
 \label{sc:SYZ_transform}

Let $x_1=-\lambda_1$, $x_2=-\lambda_2$ and $x_3$ be affine coordinates (action coordinates)
on the smooth locus $B^\sm$ of the SYZ fibration;
note that $x_1=-\lambda_1$ and $x_2=-\lambda_2$ are globally defined coordinates. We denote by $\Lambda^\vee\subset T^*B^\sm$ the family of lattices locally generated by $dx_1,dx_2, dx_3$, and let
\begin{align*}
\omega_0:=dx_1\wedge d\xi_1 + dx_2\wedge d\xi_2 + dx_3\wedge d\xi_3
\end{align*}
be the standard symplectic structure on the quotient $T^*B^\sm/\Lambda^\vee$ of the cotangent bundle $T^*B^\sm$ by $\Lambda^\vee$, where $(\xi_1, \xi_2, \xi_3)$ denote the fiber coordinates on $T^*B^\sm$. Since we have a global Lagrangian section $L_0$ (the zero-section) of the SYZ fibration $\rho:Y\to B$, there exists a fiber-preserving symplectomorphism \cite{Duistermaat_GAAC}
\begin{align*}
\Theta:(T^*B^\sm/\Lambda^\vee,\omega_0)\overset{\cong}{\longrightarrow}(\rho^{-1}(B^\sm),\omega)
\end{align*}
so that $L_0$ is mapped to the zero section of $T^*B^\sm/\Lambda^\vee$.

We take an open cover $\{U_i\}$ of $B^\sm$ such that each $U_i$ is contractible. As we have seen in \pref{sc:SYZ_construction}, the SYZ mirror $X^0$ is obtained by gluing the open pieces $TU_i/TU_i\cap\Lambda$ together according to the wall-crossing formulas \eqref{eq:wc-I}, \eqref{eq:wc-II} (and then extending by analytic continuation). Let $y_1, y_2, y_3$ be the coordinates on $TB^\sm$ which are dual to the angle coordinates $\xi_1, \xi_2, \xi_3$ on $T^*B^\sm/\Lambda^\vee$. The local complex coordinates on $X^0$ are then given by $w_1=\exp 2\pi(x_1 + \sqrt{-1}y_1)$, $w_2=\exp 2\pi(x_2 + \sqrt{-1}y_2)$ and $\exp 2\pi(x_3 + \sqrt{-1}y_3)$.

Let $L \subset Y^0$ be a Lagrangian cycle,
given as the quotient of a translate of the conormal bundle $N^*S$
of an integral affine linear subspace $S\subset B$
by the lattice $N^*S\cap \Lambda^\vee$, and
equipped with a flat $U(1)$-connection $\nabla$.
The SYZ transform of $(L,Y^0)$ is given by a pair $(C,\check{\nabla})$
consisting of the complex submanifold $C$,
which is given by gluing the open pieces
$$
 T(S\cap U_i)/ T(S\cap U_i) \cap \Lambda
$$
according to the wall-crossing formulas \eqref{eq:wc-I}, \eqref{eq:wc-II},
and a $U(1)$-connection $\check{\nabla}$,
the $(0,2)$-part of the curvature two form of which is trivial
and hence defines a holomorphic line bundle $\check{\mathcal{L}}$ over $C\subset X^0$.
\begin{definition}
We define the {\em SYZ transform} of the Lagrangian submanifold $L$ equipped with the flat $U(1)$-connection $\nabla$ to be the holomorphic line bundle $\check{\mathcal{L}}$ over the complex submanifold $C \subset X^0$.
\end{definition}
We refer the reader to the original papers \cite{Leung-Yau-Zaslow_SLHYM, Arinkin-Polishchuk_FCFT} for more details and the precise formulas; see also \cite{Chan_HMSARTD}.

By Proposition \ref{pr:section_SYZ}, the non-compact Lagrangian submanifold $L_\gamma$ associated with a strongly admissible path $\gamma:\bR\to\bC^\times$ is a section of the SYZ fibration $\rho:Y^0\to B$, so its SYZ transform should produce a holomorphic line bundle over $X^0$. Via the symplectomorphism $\Theta$, we can write $L_\gamma$ as a section of $T^*B^\sm/\Lambda^\vee$
\begin{align*}
L_\gamma=\{(x_1, x_2, x_3, \xi_1, \xi_2, \xi_3)\in T^*B^\sm/\Lambda^\vee \mid \xi_j=\xi_j(x_1,x_2,x_3)\textrm{ for $j=1,2,3$}\},
\end{align*}
where $\xi_j=\xi_j(x_1, x_2, x_3)$ ($j=1,2,3$) are smooth functions on $B^\sm$. The condition that $L_\gamma$ being Lagrangian is then equivalent to saying that the functions $\xi_1,\xi_2,\xi_3$ satisfy the relations
\begin{equation*}
\frac{\partial\xi_j}{\partial x_l}=\frac{\partial\xi_l}{\partial x_j}
\end{equation*}
for $j,l=1,2,3$.

The restriction of the Lagrangian section $L_\gamma$ to an open set $U_i\subset B^\sm$ is transformed to a family of connections $\{\check{\nabla}_{\xi(x)} \mid x\in U_i\}$ which patch together to give a $U(1)$-connection over $U_i$ that can locally be written as
\begin{equation*}
\check{\nabla}_{U_i}=d+2\pi\sqrt{-1}(\xi_1dy_1 + \xi_2dy_2 + \xi_3dy_3)
\end{equation*}
over the open piece $TU_i/TU_i\cap\Lambda \subset X^0$. Since the $(0,2)$-part of the curvature two form for each connection vanishes and the wall-crossing formulas are holomorphic, these connections glue together to give globally a holomorphic line bundle $\check{\mathcal{L}}_\gamma$ over $X^0$.

Notice that the isomorphism class of $\check{\mathcal{L}}_\gamma$ is unchanged when we deform $L_\gamma$ in a fixed Hamiltonian isotopy class (or deforming $\gamma$ in a fixed homotopy class relative to the boundary conditions $\lim_{t \to -\infty} |\gamma(t)| = 0$ and $\lim_{t \to \infty} |\gamma(t)| = \infty$). Therefore, we will regard this as defining the SYZ transform of the Hamiltonian isotopy class of the Lagrangian submanifold $L_\gamma$ as an isomorphism class of holomorphic line bundle over $X^0$.

As an immediate example, the SYZ transformation of the zero section $L_0$ gives the structure sheaf $\mathcal{O}_{X^0}$ over $X^0$.

To compute (the isomorphism class of) the line bundle $\check{\mathcal{L}}_\gamma$, note that the degree of its restriction to the exceptional curve $E\cong\bP^1$ in $X^0$ is given by
\begin{align*}
\deg\check{\mathcal{L}}_\gamma|_E = \int_E\frac{\sqrt{-1}}{2\pi}F_{\check{\nabla}} = -\int_E d\xi_3\wedge dy_3 = -(\xi_3(|b|,\vec{0})-\xi_3(|a|,\vec{0})).
\end{align*}
We have the second equality because $y_1, y_2$ are constant (and $x_i=\lambda_i=0$ for $i=1,2$) on $E$. Hence the isomorphism class of the line bundle $\check{\mathcal{L}}_\gamma$ is completely determined by the increment of the angle coordinate $\xi_3$ on the Lagrangian section $L_\gamma$ from $(0,0,|b|)$ to $(0,0,|a|)$ (which is measured with reference to the path $\gamma_0$).

\begin{proof}[Proof of \pref{th:SYZ}]
Arguing as in the proof of \cite[Theorem 1.1]{Chan-Ueda_DTFHMSAN}, we first deform $\gamma$ so that $\gamma(\log|b|)=-b$ and $\gamma(\log|a|)=-a$ and $\gamma(t)\in \bR_{>0}$ for $t \not\in (\log|b|,\log|a|)$ (up to a re-parametrization if necessary). We then further deform $\gamma|_{(\log|b|, \log|a|)}$ to the concatenation of $\gamma_0|_{(\log|b|, \log|a|)}$ (the positive real axis) with a loop winding around the circle $C_{|a|}=\{z\in\bCx \mid |z|=|a|\}$ for $w(\gamma)$ times. Along $\gamma_0$, the angle coordinate $\xi_3$ is constantly zero, and $\xi_3$ increases by one when we wind around $C_{|a|}$ once in the counterclockwise direction. Hence, the increment $\xi_3(|b|,\vec{0})-\xi_3(|a|,\vec{0})$ is precisely given by the winding number $w(\gamma)$. This completes the proof of Theorem \ref{th:SYZ}.
\end{proof}

Let $\gamma_0$ and $\gamma_1$ be admissible paths shown in \pref{fg:noncompact_lagrangians2} which have winding numbers 0 and $-1$ respectively. Their associated Lagrangian submanifolds are denoted by $L_0 := L_{\gamma_0}$ and $L_1 := L_{\gamma_1}$ respectively. By \pref{th:SYZ}, the SYZ transform of $L_i$ is precisely given by the line bundle $\scO_{X^0}(i)$ for $i=0,1$.
%\pref{th:hms} says that this realizes an HMS equivalence of triangulated categories.

Next we consider a strongly admissible bounded path $\sigma : [0,1] \to \bCx$ going from $b$ to $a$. Recall that the corresponding compact Lagrangian 3-sphere $L_\sigma$ is a $T^2$-fibration over the line segment $\ell_0=(|b|,|a|) \times \{\vec{0}\}$ in the base $B=\bR_{>0}\times \bR^2$ of the SYZ fibration $\rho: Y^0 \to B$ such that the $T^2$-fiber degenerates to an $S^1$ over the endpoints $(|a|,\vec{0})$ and $(|b|,\vec{0})$ of $\ell_0$.

Let $L_\sigma^\circ = L_\sigma \cap \rho^{-1}(\ell_0)$, i.e. $L_\sigma$ with the two $S^1$'s over the end points of $\ell_0$ removed. Then $L_\sigma^\circ$ is (the quotient by a lattice of) a translate of the conormal bundle of $\ell_0$. Recall that the coordinates $w_1,w_2$ on $X^0$ are given by $w_1 = \exp 2\pi(x_1 + \sqrt{-1} y_1)$ and $w_2 = \exp 2\pi(x_2 + \sqrt{-1} y_2)$. We equip $L_\sigma$ with the flat $U(1)$-connection
$$
 \nabla_0 = d - \pi\sqrt{-1} (d\xi_1 + d\xi_2).
$$
Then the SYZ transform of $(L_\sigma,\nabla_0)$ produces the complex submanifold in $X^0$ defined by $x_1=x_2=0$, $y_1=y_2=1/2$ or simply $w_1=w_2=-1$, which is precisely the exceptional locus $E\cong\bP^1 \subset X^0$ (cf. \cite[Section 2]{Chan_HMSARTD}).

We also get the $U(1)$-connection
$$
 \check{\nabla} = d + 2\pi\sqrt{-1} \xi_3(x_3,\vec{0}) dy_3
$$
on $E$ which defines a holomorphic line bundle over $E$ whose degree can be computed as
\begin{align*}
\int_E\frac{\sqrt{-1}}{2\pi}F_{\check{\nabla}} = -\int_E d\xi_3\wedge dy_3 = -(\xi_3(|a|,\vec{0})-\xi_3(|b|,\vec{0})) = -w(\sigma).
\end{align*}
We have the last equality because the increment of the angle coordinate $\xi_3$ is measured relative to the reference path $\sigma_0$ which is computed by the winding number of the loop $\overline{\sigma}_0\circ\sigma$ and this is by definition $w(\sigma)$. This proves the following:
\begin{theorem} \label{th:4.2}
The SYZ transform of the compact Lagrangian 3-sphere $L_\sigma$ associated to a strongly admissible bounded path $\sigma:[0,1] \to \bCx$ is given by the line bundle $\scO_E(-w(\sigma))$ over the exceptional locus $E\subset X^0$.
\end{theorem}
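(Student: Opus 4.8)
The plan is to mirror the strategy of the proof of \pref{th:SYZ}, adapting it from a global Lagrangian section over $B^\sm$ to a Lagrangian fibred over the bounded segment $\ell_0$. By definition the SYZ transform is determined by two pieces of data: the complex submanifold $C \subset X^0$ on which the output line bundle lives, and the holomorphic line bundle $\check{\scL}_\sigma$ over $C$ obtained by transforming the chosen flat $U(1)$-connection. I would first pin down $C$, then compute the degree of $\check{\scL}_\sigma$, and finally match that degree to $-w(\sigma)$.

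For the support, the first step is to remove from $L_\sigma$ the two circles lying over the endpoints $(|a|,\vec{0})$ and $(|b|,\vec{0})$, obtaining $L_\sigma^\circ$. This $L_\sigma^\circ$ is a translate of the conormal bundle $N^*\ell_0$ modulo the lattice $N^*\ell_0 \cap \Lambda^\vee$, which is exactly the form of input required by the definition of the SYZ transform. Since $\ell_0$ is cut out in $B^\sm$ by $x_1 = x_2 = 0$, while the chosen connection $\nabla_0 = d - \pi\sqrt{-1}(d\xi_1 + d\xi_2)$ fixes the phases at $y_1 = y_2 = 1/2$, the transform lands in the locus $\{w_1 = w_2 = -1\}$ through $w_j = \exp 2\pi(x_j + \sqrt{-1}\,y_j)$. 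This locus is precisely the exceptional curve $E \cong \bP^1$, and the two removed circles fill in the two poles, so the underlying complex submanifold is $C = E$.

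For the line bundle itself, the only remaining direction is $\xi_3$, which transforms into the connection $\check{\nabla} = d + 2\pi\sqrt{-1}\,\xi_3(x_3,\vec{0})\,dy_3$ over $E$. Its degree is computed by integrating the curvature over $E$; because $x_1 = x_2 = 0$ and $y_1, y_2$ are constant along $E$, one obtains $\deg \check{\scL}_\sigma = -\int_E d\xi_3 \wedge dy_3 = -\bigl(\xi_3(|a|,\vec{0}) - \xi_3(|b|,\vec{0})\bigr)$, that is, minus the total increment of the angle coordinate $\xi_3$ along $\ell_0$.

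The crux is relating this increment to the winding number $w(\sigma)$, and this is the step I expect to demand the most care. Here the reference path $\sigma_0$ plays the calibrating role that the positive real axis $\gamma_0$ played in \pref{th:SYZ}: since $\xi_3$ is measured relative to $\sigma_0$, its increment along $\ell_0$ equals the winding number of the concatenated loop $\overline{\sigma}_0 \circ \sigma$ around the circle $C_{|a|}$, which is exactly $w(\sigma)$ by definition. The delicate points are verifying that this increment is an honest integer (so that the degree is well defined) and that the sign conventions deliver $-w(\sigma)$ rather than $+w(\sigma)$. Granting this, we get $\deg \check{\scL}_\sigma = -w(\sigma)$, and since a line bundle on $\bP^1$ is determined by its degree, we conclude $\check{\scL}_\sigma \cong \scO_E(-w(\sigma))$.
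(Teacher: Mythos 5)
Your proposal is correct and follows essentially the same route as the paper's own proof: removing the boundary circles to realize $L_\sigma^\circ$ as a translated conormal bundle of $\ell_0$, using the connection $\nabla_0$ to identify the support with $E = \{w_1 = w_2 = -1\}$, and computing the degree as minus the increment of $\xi_3$, which equals $-w(\sigma)$ by the choice of reference path $\sigma_0$. The only point the paper leaves equally implicit is the integrality of the increment, so there is no substantive difference.
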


\pref{th:SYZ0} is an immediate consequence of \pref{th:4.2}
since the bounded paths defining $S_0$ and $S_1$ have winding numbers $0$ and $1$ respectively.
%The Lagrangian 3-spheres $S_0$, $S_1$ are dual to the non-compact Lagrangian submanifolds $L_0$, $L_1$.
%\pref{th:hms0} says that the SYZ transform $S_i \mapsto \scO_E(-i)$, $i=0,1$, realizes another HMS equivalence of triangulated categories.

\section{Coherent sheaves on the resolved conifold}
 \label{sc:coh}

Let $\bCx$ acts on $\bC^4 = \Spec \bC[x, y, t_1, t_2]$
in such a way that $\alpha \in \bCx$ maps $(x, y, t_1, t_2)$
to $(\alpha x, \alpha y, \alpha^{-1} t_1, \alpha^{-1} t_2)$.
It is convenient to realize the resolved conifold as the quotient
\begin{align} \label{eq:rc1}
 X = (\bC^4 \setminus \Sigma) / \bCx
\end{align}
where $\Sigma := \{ (x, y, t_1, t_2) \in \bC^4 \mid x = y = 0 \}$.
In these coordinates, the morphism
$$
\varphi : X \to  Z = \{ (u, v, w_1, w_2) \in \bC^4 \mid u v = (1 + w_1)(1 + w_2) \}
$$
to the conifold is given by
\begin{align*}
 u = x t_1, \
 v = y t_2, \
 w_1 = x t_2 - 1,
 w_2 = y t_1 - 1.
\end{align*}
%
%\begin{figure}[ht]
%\centering
%\input{conifold_diagram.pst}
%\caption{The toric diagram for the conifold}
%\label{fg:conifold_diagram}
%\end{figure}
%
\begin{definition} \label{df:tilting}
An object $\scE$ in a triangulated category $\scT$ is a {\em tilting object} if
\begin{itemize}
 \item
$\scE$ is {\em acyclic} in the sense that
$\Ext^k(\scE, \scE) = 0$ for any $k \ne 0$, and
 \item
$\scE$ is a {\em classical generator},
in the sense that the smallest, thick, triangulated subcategory generated by $\scE$
is all of $\scT$.
\end{itemize}
\end{definition}

Note that any classical generator $\scE$ {\em generates} $\scT$
in the sense that $\Hom^k(\scE, A) = 0$
for some $A \in \scT$ and all $k \in \bZ$
implies $A \cong 0$
(cf.~e.g.~\cite[Section 2.1]{Bondal-van_den_Bergh}).
The proof of the following theorem
can be found in \cite[Lemma 3.3]{Toda-Uehara},
and goes back at least to \cite{Rickard, Bondal_RAACS}:

\begin{theorem} \label{th:tilting}
Let $\scE$ be a tilting object
in the derived category $D^b \coh X$ of coherent sheaves
on a smooth quasi-projective variety $X$.
Then $D^b \coh X$ is equivalent to the bounded derived category
of finitely-generated right modules over $\Hom(\scE, \scE)$.
\end{theorem}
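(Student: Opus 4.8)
The plan is to exhibit the equivalence through the functor $\Phi := \mathbf{R}\Hom(\scE, -)$ together with its left adjoint, reducing every assertion to the behaviour on the single generator $\scE$. Since $\scE$ is acyclic, the graded algebra $\mathbf{R}\Hom(\scE,\scE)$ is concentrated in degree zero and coincides with the ordinary endomorphism algebra $A := \Hom(\scE,\scE)$. The functor $\Phi : D^b\coh X \to D(\mathrm{Mod}\text{-}A)$ takes values in right $A$-modules via precomposition by endomorphisms of $\scE$, and by construction $\Phi(\scE) = A$ as the regular right $A$-module. Because $X$ is smooth and quasi-projective, for any $F \in D^b\coh X$ the spaces $\Ext^k(\scE, F)$ are finite-dimensional and vanish for $|k| \gg 0$, so $\Phi$ in fact lands in the bounded derived category $D^b(\mathrm{mod}\text{-}A)$ of finitely generated modules.

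Next I would introduce the left adjoint $\Psi := (-)\otimes_A^{\mathbf{L}} \scE$ of $\Phi$ and examine the unit $\eta : \mathrm{id} \to \Phi\Psi$ and counit $\varepsilon : \Psi\Phi \to \mathrm{id}$. On the free module one computes $\Psi(A) = \scE$, hence $\Phi\Psi(A) = \Phi(\scE) = A$ with $\eta_A$ the identity; dually $\varepsilon_{\scE} : \Psi\Phi(\scE) = \Psi(A) = \scE \to \scE$ is the identity. The key formal point is that the full subcategories on which $\eta$, respectively $\varepsilon$, is an isomorphism are thick triangulated subcategories, since $\Phi$ and $\Psi$ are exact and the vanishing of the cone of a natural transformation is stable under triangles and direct summands.

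The argument then closes by invoking the two generation hypotheses. Since $\scE$ is a classical generator of $D^b\coh X$, the thick subcategory on which $\varepsilon$ is an isomorphism contains $\scE$ and is therefore all of $D^b\coh X$, so $\varepsilon$ is a natural isomorphism. Dually, the regular module $A$ classically generates $D^b(\mathrm{mod}\text{-}A)$, whence $\eta$ is a natural isomorphism. An adjoint pair whose unit and counit are both isomorphisms is an equivalence, and this yields the stated equivalence $D^b\coh X \cong D^b(\mathrm{mod}\text{-}A)$.

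The step I expect to be the main obstacle is controlling the target category: verifying that $\Phi$ and $\Psi$ genuinely restrict to functors between the \emph{bounded} subcategories $D^b\coh X$ and $D^b(\mathrm{mod}\text{-}A)$, and that the regular module $A$ classically generates $D^b(\mathrm{mod}\text{-}A)$. Both rely on the smoothness of $X$, which forces $A$ to have finite global dimension: every object of $D^b\coh X$ admits a bounded resolution, and $\Phi$ transports this finiteness to the module side. Without this input the derived tensor $\Psi$ need not preserve boundedness and the generation of $D^b(\mathrm{mod}\text{-}A)$ by $A$ could fail. Once these finiteness statements are in place, the remaining full-faithfulness and essential-surjectivity bookkeeping via thick subcategories is routine.
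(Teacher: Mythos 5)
The paper offers no proof of this statement at all: it simply cites \cite{Toda-Uehara}, \cite{Rickard}, and \cite{Bondal_RAACS}. Your argument is precisely the standard one found in those references --- the adjoint pair $\Phi = \mathbf{R}\Hom(\scE,-)$, $\Psi = -\otimes^{\mathbf{L}}_A \scE$, the observation that the loci where the unit and counit are isomorphisms are thick subcategories, and the two generation statements --- so in substance you have reproduced the intended proof, and the architecture is sound.

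One step as written is actually false, and notably fails in this paper's own main example. You claim that for quasi-projective $X$ the spaces $\Ext^k(\scE,F)$ are finite-dimensional over $\bC$; since $X$ need not be proper, this is wrong --- already $\Hom(\scO_X,\scO_X)=\Gamma(\scO_X)$ is infinite-dimensional when $X$ is the resolved conifold. What you need (and what is true, and what the cited Lemma 3.3 of Toda--Uehara actually proves) is that each $\Ext^k(\scE,F)$ is \emph{finitely generated as a right $A$-module}. This is not automatic from quasi-projectivity alone; the references obtain it by assuming $X$ is projective over an affine Noetherian base $\Spec R$, so that the $\Ext$ groups are finite $R$-modules and $R$ maps to the center of $A$. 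With that substitution, together with the finite global dimension of $A$ (which, as you correctly flag, is the point where smoothness of $X$ enters and which must be established before restricting $\Psi$ to the bounded categories, lest the argument become circular), your proof closes.
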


%A proof of \pref{th:tilting} can be found in \cite[Lemma 3.3]{Toda-Uehara}.
The following is well-known
(cf.~e.g.~\cite{Van_den_Bergh_TFNR}):

\begin{theorem}
 \label{th:conifold_tilting}
The direct sum $\scO_X \oplus \scO_X(1)$ is a tilting object in $D^b \coh X$,
whose endomorphism algebra is described by the quiver
\begin{align*}
\\[3mm]
%M = \lb
\begin{psmatrix}[mnode=r]
 \scO
  & &
 \scO(1)
\end{psmatrix}
\psset{shortput=nab,arrows=->,labelsep=3pt,nodesep=3pt}
\ncarc[arcangle=40, offset=4pt]{1,1}{1,3}^{x}
\ncarc[arcangle=40]{1,1}{1,3}_{y}
\ncarc[arcangle=40]{1,3}{1,1}_{t_1}
\ncarc[arcangle=40, offset=4pt]{1,3}{1,1}^{t_2}
%\rb
 \\[3mm]
\end{align*}
with relations
\begin{align} \label{eq:relations}
 \scI = ( x t_1 y - y t_1 x, x t_2 y - y t_2 x, t_1 x t_2 - t_2 x t_1, t_1 y t_2 - t_2 y t_1).
\end{align}
\end{theorem}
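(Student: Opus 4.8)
The plan is to verify directly the two defining properties of a tilting object for $\scE = \scO_X \oplus \scO_X(1)$ and then to compute $\operatorname{End}(\scE)$ by hand; once both are in place, the derived equivalence follows formally from \pref{th:tilting} (note $X$ is smooth and quasi-projective). Throughout I would exploit the description of $X$ as the total space of the rank-two bundle $V = \scO_{\bP^1}(-1)^{\oplus 2}$, with affine projection $\pi : X \to \bP^1$ and $\scO_X(i) = \pi^*\scO_{\bP^1}(i)$, together with the GIT presentation \eqref{eq:rc1}.

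For acyclicity I would first record that, since $\pi$ is affine,
\[
 \pi_* \scO_X \;=\; \bigoplus_{m \ge 0} \operatorname{Sym}^m V^\vee \;=\; \bigoplus_{m\ge 0} \scO_{\bP^1}(m)^{\oplus (m+1)}.
\]
Because $\scO_X(j-i)$ is a line bundle, $\Ext^k_X(\scO_X(i),\scO_X(j)) = H^k(X,\scO_X(j-i))$, and affineness of $\pi$ yields $H^k(X,\scO_X(n)) = H^k(\bP^1,\scO_{\bP^1}(n)\otimes\pi_*\scO_X)$. For $i,j\in\{0,1\}$ the twist $n=j-i$ lies in $\{-1,0,1\}$, so every summand $\scO_{\bP^1}(n+m)$ has degree $\ge -1$ and hence carries no $H^1$; as $\bP^1$ has cohomological dimension one, this gives $\Ext^k(\scE,\scE)=0$ for all $k\neq 0$.

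For generation I would argue by orthogonality. If $F$ is right-orthogonal to $\scE$, adjunction gives $\Hom^\bullet_{\bP^1}(\scO_{\bP^1}(i), R\pi_* F) = \Hom^\bullet_X(\scO_X(i), F) = 0$ for $i=0,1$, and since $\{\scO_{\bP^1},\scO_{\bP^1}(1)\}$ generate $D^b\coh\bP^1$ (Beilinson) this forces $R\pi_* F = 0$; as $\pi$ is affine, $\pi_* = R\pi_*$ is exact and conservative, so $F=0$. Running the same computation for quasi-coherent $F$ exhibits $\scE$ as a compact generator of $D_{\mathrm{qc}}(X)$, whence $\scE$ classically generates the subcategory of perfect complexes, which equals $D^b\coh X$ by smoothness. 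This establishes the second tilting property.

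It remains to identify $\operatorname{End}(\scE)$ with the quiver algebra. Writing $R=\bC[x,y,t_1,t_2]$ with $\bCx$-weights $(1,1,-1,-1)$, the locus $\Sigma=\{x=y=0\}$ removed in \eqref{eq:rc1} has codimension two, so by normality global sections are computed on all of $\bC^4$ and $\Hom(\scO_X(i),\scO_X(j)) = H^0(X,\scO_X(j-i)) = R_{j-i}$, the weight-$(j-i)$ piece of $R$. Here $x,y$ are the weight-one arrows $\scO\to\scO(1)$, $t_1,t_2$ the weight-$(-1)$ arrows $\scO(1)\to\scO$, while the weight-zero blocks $\Hom(\scO,\scO)\cong\Hom(\scO(1),\scO(1))\cong R_0$ are generated by the length-two cycles $xt_1,xt_2,yt_1,yt_2$. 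I would then define the algebra map $\Phi$ from the path algebra of the displayed quiver to $\operatorname{End}(\scE)$ sending each arrow to multiplication by the corresponding variable; surjectivity follows from the previous sentence, and each generator of $\scI$ maps to zero because $R$ is commutative (for instance both $xt_1y$ and $yt_1x$ act as multiplication by the same element of $R_1$). The genuine content, and the step I expect to be the main obstacle, is injectivity of the induced map $\bC Q/\scI \to \operatorname{End}(\scE)$: one must show $\scI$ accounts for all relations among paths. I would prove this by a weight-by-weight dimension count, exhibiting a normal form for paths modulo $\scI$—each cycle becoming a power of a fixed length-two cycle times a reordered monomial—and matching the resulting basis with the monomial basis of $R_n$; the commutativity relations make the rewriting confluent. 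Alternatively one may recognize $\bC Q/\scI$ as $\operatorname{End}_Z(\varphi_*\scO_X\oplus\varphi_*\scO_X(1))$ and invoke the standard description of the noncommutative crepant resolution of the conifold.
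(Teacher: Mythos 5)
Your proposal is correct, but it cannot be compared line-by-line with the paper's argument for the simple reason that the paper offers none: \pref{th:conifold_tilting} is stated as ``well-known'' with a pointer to Van den Bergh's work on non-commutative crepant resolutions, and no proof is given. What you supply is the standard direct verification, and each step checks out: acyclicity via $\pi_*\scO_X=\bigoplus_{m\ge 0}\operatorname{Sym}^m(\scO_{\bP^1}(1)^{\oplus 2})=\bigoplus_{m\ge 0}\scO_{\bP^1}(m)^{\oplus(m+1)}$, so that for twists $n\in\{-1,0,1\}$ every summand has degree $\ge -1$ and $H^{\ge 1}$ vanishes; generation via conservativity of the affine pushforward together with Beilinson's generators on $\bP^1$, upgraded from weak to classical generation by compactness and smoothness; and the identification of the $\Hom$-spaces with the graded pieces $R_{j-i}$ of $\bC[x,y,t_1,t_2]$ using the codimension-two removal in \eqref{eq:rc1}. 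You also correctly isolate the one step with genuine content, namely injectivity of $\bC Q/\scI\to\End(\scE)$, and your normal-form strategy does close it: the relations \eqref{eq:relations} force the four length-two cycles $xt_1,xt_2,yt_1,yt_2$ at either vertex to commute and to satisfy $(xt_1)(yt_2)=(xt_2)(yt_1)$, so each graded piece of $\bC Q/\scI$ is spanned by sorted monomials in bijection with a monomial basis of $R_{j-i}$, and surjectivity of $\Phi$ plus this upper bound on dimensions yields injectivity. Your fallback of citing the NCCR description of the conifold is, in effect, exactly what the paper itself does; your argument has the advantage of being self-contained, at the cost of the bookkeeping in the confluence/dimension count, which you sketch rather than carry out but which is routine.
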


Let $\lc P_{a, i_1, i_2} \rc_{(a, i_1, i_2) \in \bZ \times \bN^2}$
%for $a \in \bZ$ and $i_1, i_2 \in \bN$
be the basis of
$
 \Hom(\scO_X, \scO_X)
 \cong \Hom(\scO_X(1), \scO_X(1))
 \cong \Gamma(\scO_X)
$
defined by
\begin{align*}
 P_{a, i_1, i_2} &=
\begin{cases}
 u^{-a} w_1^{i_1} w_2^{i_2} & a < 0, \\
 v^a w_1^{i_1} w_2^{i_2} & a \ge 0.
\end{cases}
\end{align*}
Similarly,
we define the bases
$\lc Q_{a, i_1, i_2} \rc_{(a, i_1, i_2) \in (\bZ + \frac{1}{2}) \times \bN^2}$
and
$\lc R_{a, i_1, i_2} \rc_{(a, i_1, i_2) \in (\bZ + \frac{1}{2}) \times \bN^2}$
of
$
 \Hom(\scO_X, \scO_X(1))
$
and
$
 \Hom(\scO_X(1), \scO_X)
$
as
\begin{align*}
 Q_{a, i_1, i_2} &=
\begin{cases}
 x u^{-a-1/2} w_1^{i_1} w_2^{i_2} & a < 0, \\
 y v^{a-1/2} w_1^{i_1} w_2^{i_2} & a \ge 0.
\end{cases}
\end{align*}
and
\begin{align*}
 R_{a, i_1, i_2} &=
\begin{cases}
 t_1 u^{-a-1/2} w_1^{i_1} w_2^{i_2} & a < 0, \\
 t_2 v^{a-1/2} w_1^{i_1} w_2^{i_2} & a \ge 0.
\end{cases}
\end{align*}
respectively.

We have the following elementary algebra calculation.

\begin{proposition}[{cf.~\cite[Proposition 4.5]{Pascaleff_FCMPP}}]
The composition of $P_{a, i_1, i_2}$ is given by
\begin{align} \label{eq:composition1}
 P_{b, j_1, j_2} \cdot P_{a, i_1, i_2}
  &= \sum_{s_1, s_2=0}^k \binom{k}{s_1} \binom{k}{s_2}
   P_{a+b, i_1+j_1+s_1, i_2+j_2+s_2}
\end{align}
where
\begin{align*}
 k =
\begin{cases}
 \min \{ |a|, |b| \} & \text{$a$ and $b$ have different signs}, \\
 0 & \text{otherwise}.
\end{cases}
\end{align*}
The composition of $P_{a, i_1, i_2}$ and $Q_{b, j_1, j_2}$ is given by
\begin{align} \label{eq:composition2}
 Q_{b, j_1, j_2} \cdot P_{a, i_1, i_2}
%  = P_{a, i_1, i_2} \cdot Q_{b, j_1, j_2}
  &= \sum_{s_1, s_2=0}^k \binom{k}{s_1} \binom{k}{s_2}
   Q_{a+b, i_1+j_1+s_1, i_2+j_2+s_2}
\end{align}
where
\begin{align*}
 k =
\begin{cases}
 \min \{ |a|, |b|-1/2 \} & \text{$a$ and $b$ have different signs}, \\
 0 & \text{otherwise},
\end{cases}
\end{align*}
and similarly for the composition of
$P_{a, i_1, i_2}$ and $R_{b, j_1, j_2}$.
The composition of $Q_{a, i_1, i_2}$ and $R_{b, j_1, j_2}$ is given by
\begin{align} \label{eq:composition3}
 R_{b, j_1, j_2} \cdot Q_{a, i_1, i_2}
%  &=  Q_{a, i_1, i_2} \cdot R_{b, j_1, j_2} \\
  &= \sum_{s_1}^{k_1} \sum_{s_2=0}^{k_2} \binom{k_1}{s_1} \binom{k_2}{s_2}
   P_{a+b, i_1+j_1+s_1, i_2+j_2+s_2}
\end{align}
where
\begin{align*}
 k_1 =
\begin{cases}
 \min \{ |a|-1/2, |b|-1/2 \} + 1 & a < 0 \text{ and } b > 0, \\
 \min \{ |a|-1/2, |b|-1/2 \} & a > 0 \text{ and } b < 0, \\
 0 & \text{otherwise},
\end{cases} \\
 k_2 =
\begin{cases}
 \min \{ |a|-1/2, |b|-1/2 \} & a < 0 \text{ and } b > 0, \\
 \min \{ |a|-1/2, |b|-1/2 \} + 1 & a > 0 \text{ and } b < 0, \\
 0 & \text{otherwise}.
\end{cases}
\end{align*}
\end{proposition}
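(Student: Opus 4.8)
\section*{Proof proposal}

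The plan is to trade the categorical composition for ordinary multiplication of functions in the homogeneous coordinate ring of $X$. Under the presentation $X = (\bC^4 \setminus \Sigma)/\bCx$, a morphism $\scO_X(i) \to \scO_X(j)$ is exactly a $\bCx$-weight-$(j-i)$ element of $\bC[x, y, t_1, t_2]$ acting by multiplication, and composition of morphisms is simply the product of the corresponding weight-homogeneous polynomials. Thus $P_{a, i_1, i_2} \in \Gamma(\scO_X)$, $Q_{a, i_1, i_2} \in \Gamma(\scO_X(1))$, and $R_{a, i_1, i_2} \in \Gamma(\scO_X(-1))$ become concrete monomials once we substitute $u = x t_1$, $v = y t_2$, $w_1 = x t_2 - 1$, $w_2 = y t_1 - 1$, and each stated formula reduces to expanding a product of two such monomials and re-expressing it in the relevant monomial basis. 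First I would set up this dictionary and record the elementary identities
\begin{gather*}
 uv = (1 + w_1)(1 + w_2), \quad y u = x(1 + w_2), \quad x v = y(1 + w_1), \\
 t_1 x = u, \quad t_2 y = v, \quad t_1 y = 1 + w_2, \quad t_2 x = 1 + w_1,
\end{gather*}
all of which follow immediately from the substitution and are the only facts needed.

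For \eqref{eq:composition1} the product $P_{b, j_1, j_2} \cdot P_{a, i_1, i_2}$ equals a power of $u$ or $v$ times $w_1^{i_1 + j_1} w_2^{i_2 + j_2}$. When $a$ and $b$ have the same sign no reduction occurs and one lands on a single basis vector, matching $k = 0$. When they have opposite signs, setting $k = \min\{|a|, |b|\}$ and using $u^{|a|} v^{|b|} = ((1 + w_1)(1 + w_2))^{k}\, u^{|a|-k} v^{|b|-k}$ (one of the surviving exponents being zero), the binomial expansion $(1 + w_i)^k = \sum_{s} \binom{k}{s} w_i^s$ produces precisely the double sum with coefficients $\binom{k}{s_1}\binom{k}{s_2}$. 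The same mechanism drives \eqref{eq:composition2}: one factor now carries an extra $x$ or $y$, and after reducing the $u$--$v$ part one may be left with a mismatched pairing such as $y u^{m}$ or $x v^{m}$, which must be returned to basis form through $y u = x(1 + w_2)$ or $x v = y(1 + w_1)$; these conversions introduce an additional factor of $(1+w_2)$ or $(1+w_1)$ that has to be expanded binomially together with the others.

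The formula \eqref{eq:composition3} for $R_{b, j_1, j_2} \cdot Q_{a, i_1, i_2}$ is the main obstacle, and is where the asymmetric exponents $k_1, k_2$ and their $+1$ shifts originate. Here the product contains a factor $t_\bullet x$ or $t_\bullet y$; depending on the sign pattern of $(a,b)$ this collapses either via $t_1 x = u$ or $t_2 y = v$ to a further power of $u$ or $v$, or via $t_2 x = 1 + w_1$ or $t_1 y = 1 + w_2$ to an \emph{extra} factor of $(1 + w_1)$ or $(1 + w_2)$. It is exactly this extra factor -- present for one of the two $w$-variables but not the other, and only in the mixed sign cases $a<0,\,b>0$ and $a>0,\,b<0$ -- that raises the corresponding binomial range from $k$ to $k+1$ and breaks the symmetry between $k_1$ and $k_2$. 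I would therefore organize the argument as a four-way case analysis on the signs of $a$ and $b$: in each case reduce $t_\bullet x$ or $t_\bullet y$ first, then reduce the remaining $u$--$v$ product as in \eqref{eq:composition1}, and finally read off the two binomial ranges. The genuine work -- and the only step requiring care -- is the bookkeeping that expresses every resulting monomial in the stated $P$-, $Q$-, or $R$-basis exactly once; the binomial identities themselves are routine.
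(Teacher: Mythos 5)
Your strategy is exactly the one the paper has in mind: the proposition is introduced there only as ``the following elementary algebra calculation'' with no written proof, the intended argument being precisely your dictionary (morphisms among the $\scO_X(i)$ are weight-homogeneous elements of $\bC[x,y,t_1,t_2]$ acting by multiplication, and the identities $uv=(1+w_1)(1+w_2)$, $yu=x(1+w_2)$, $xv=y(1+w_1)$, $t_1x=u$, $t_2y=v$, $t_1y=1+w_2$, $t_2x=1+w_1$ carry the whole computation). Your treatment of \eqref{eq:composition1} and of \eqref{eq:composition3}, including the origin of the asymmetric $+1$ shifts in $k_1,k_2$, is correct.

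One point deserves attention before you declare the bookkeeping routine. If you carry out your own reduction for \eqref{eq:composition2} in the sub-case where $a$ and $b$ have different signs and $|a|>|b|-1/2$, the conversion $yu^m=x(1+w_2)u^{m-1}$ (resp.\ $xv^m=y(1+w_1)v^{m-1}$) contributes exactly the extra binomial factor you describe, and the resulting ranges come out as $(k,k+1)$ rather than $(k,k)$; the symmetric formula as printed cannot be literally correct there. Concretely, $Q_{1/2,0,0}\cdot P_{-1,0,0}=yu=x(1+w_2)=Q_{-1/2,0,0}+Q_{-1/2,0,1}$, whereas \eqref{eq:composition2} with $k=\min\{1,0\}=0$ predicts the single term $Q_{-1/2,0,0}$. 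In the complementary sub-case $|a|\le|b|-1/2$ no conversion is needed and the symmetric formula does hold. So your mechanism is right, but you should either record the corrected asymmetric ranges (in the style of \eqref{eq:composition3}) or explicitly flag the discrepancy with the stated formula rather than assert that the expansion ``produces precisely'' it.
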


% \begin{proof} %
% If $0 \le -a < b$, then one has %
%\begin{align*} %
% P_{b, j_1, j_2} \cdot P_{a, i_1, i_2} %
% &= v^b w_1^{j_1} w_2^{j_2} \cdot u^{-a} w_1^{i_1} w_2^{i_2} \\ %
 % &= u^{-a} v^b w_1^{i_1+j_1} w_2^{i_2+i_2} \\ %
 % &= (uv)^{-a} \cdot v^{a+b} w_1^{i_1+j_1} w_2^{i_2+i_2} \\ %
 % &= (1+w_1)^{-a} (1+w_2)^{-a} \cdot v^{a+b} w_1^{i_1+j_1} w_2^{i_2+i_2} \\ %
 % &= \sum_{s_1, s_2=0}^a \binom{a}{s_1} \binom{a}{s_2} %
 %  v^{a+b} w_1^{i_1+j_1+s_1} w_2^{i_2+i_2+s_2}, \\ %
%\end{align*} %
%\begin{align*} %
 % Q_{b, j_1, j_2} \cdot P_{a, i_1, i_2} %
 % &= y v^{b-1/2} w_1^{j_1} w_2^{j_2} \cdot u^{-a} w_1^{i_1} w_2^{i_2} \\ %
 % &= y u^{-a} v^{b-1/2} w_1^{i_1+j_1} w_2^{i_2+i_2} \\ %
 % &= y (uv)^{-a} \cdot v^{a+b-1/2} w_1^{i_1+j_1} w_2^{i_2+i_2} \\ %
 % &= y (1+w_1)^{-a} (1+w_2)^{-a} \cdot v^{a+b-1/2} w_1^{i_1+j_1} w_2^{i_2+i_2} \\ %
%  &= \sum_{s_1, s_2=0}^a \binom{a}{s_1} \binom{a}{s_2} %
 %  y v^{a+b-1/2} w_1^{i_1+j_1+s_1} w_2^{i_2+i_2+s_2}, \\ %
%\end{align*} %
%\begin{align*} %
% R_{b, j_1, j_2} \cdot Q_{a, i_1, i_2} %
%  &= t_2 v^{b-1/2} w_1^{j_1} w_2^{j_2} \cdot x u^{-a-1/2} w_1^{i_1} w_2^{i_2} \\ %
%  &= x t_2 u^{-a-1/2} v^{b-1/2} w_1^{i_1+j_1} w_2^{i_2+i_2} \\ %
%  &= (1+w_1) (uv)^{-a-1/2} \cdot v^{a+b} w_1^{i_1+j_1} w_2^{i_2+i_2} \\ %
%  &= (1+w_1)^{-a+1/2} (1+w_2)^{-a-1/2} \cdot v^{a+b} w_1^{i_1+j_1} w_2^{i_2+i_2} \\ %
%  &= \sum_{s_1=0}^{-a+1/2} \sum_{s_2=0}^{-a-1/2} \binom{-a+1/2}{s_1} \binom{-a-1/2}{s_2} %
 %  v^{a+b} w_1^{i_1+j_1+s_1} w_2^{i_2+i_2+s_2}.%
% \end{align*} %
% Other cases are proved similarly. %
% \end{proof} %

The mirror $X^0$ is the complement $X \setminus D$
of the divisor $D = \{ w_1 w_2 = 0 \}$ on $X$.
%\pref{th:conifold_tilting} quickly implies the following corollary:

\begin{corollary} \label{cr:conifold_tilting}
The direct sum $\scO_{X^0} \oplus \scO_{X^0}(1)$ is a tilting object in $D^b \coh X^0$.
\end{corollary}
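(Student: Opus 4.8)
The plan is to verify the two defining conditions of \pref{df:tilting} for $\scE := \scO_{X^0}\oplus\scO_{X^0}(1)$, bootstrapping from the tilting statement on $X$ in \pref{th:conifold_tilting}. Write $j : X^0 \hookrightarrow X$ for the open immersion, so that $j^\ast \scO_X(i) = \scO_{X^0}(i)$.

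\emph{Generation} should be formal. The restriction functor $j^\ast : D^b\coh X \to D^b\coh X^0$ is exact and triangulated, and it is essentially surjective because every coherent sheaf on the open subscheme $X^0$ of the Noetherian scheme $X$ extends to a coherent sheaf on $X$. Since $\scO_X\oplus\scO_X(1)$ classically generates $D^b\coh X$ by \pref{th:conifold_tilting}, applying $j^\ast$ shows that the thick triangulated subcategory generated by $\scE$ contains $j^\ast(D^b\coh X)=D^b\coh X^0$; hence $\scE$ is a classical generator.

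\emph{Acyclicity} is the real content. As the summands are line bundles, one must show
\[
 H^k(X^0,\scO_{X^0}(m)) = 0 \qquad (k\neq 0,\ m\in\{-1,0,1\}).
\]
The crucial geometric input is that the removed divisor $D=\varphi^{-1}\{w_1w_2=0\}$ is disjoint from the exceptional curve $E$: along $E$ one has $w_1=w_2=-1$, so $w_1w_2=1\neq 0$. Hence $\varphi$ restricts to an isomorphism from $D$ onto the closed subscheme $\{w_1w_2=0\}$ of the affine variety $Z$, so $D$ is affine. I would then feed $\scO_X(m)$ into the local cohomology exact sequence of the pair $(X,D)$,
\[
 \cdots \to H^k(X,\scO_X(m)) \to H^k(X^0,\scO_{X^0}(m)) \to H^{k+1}_D(X,\scO_X(m)) \to \cdots .
\]
For $m\in\{-1,0,1\}$ the leftmost term vanishes in degrees $k\geq 1$ by \pref{th:conifold_tilting}, reducing the problem to the vanishing of $H^{k+1}_D(X,\scO_X(m))$ for $k\geq 1$. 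Because $D$ is cut out by the single regular element $w_1w_2$, one has $\mathcal{H}^q_D(\scO_X(m))=0$ for $q\neq 1$ and $\mathcal{H}^1_D(\scO_X(m))=\scO_X(m)(\ast D)/\scO_X(m)$, so the local-to-global spectral sequence degenerates to $H^{k+1}_D(X,\scO_X(m))\cong H^k(X,\mathcal{H}^1_D(\scO_X(m)))$. Writing $\mathcal{H}^1_D(\scO_X(m))=\varinjlim_n \scO_X(m)(nD)/\scO_X(m)$ as a filtered colimit of coherent sheaves supported on the infinitesimal thickenings $nD$—each affine, since its reduction $D$ is—their higher cohomology vanishes, and so does that of the colimit on the Noetherian scheme $X$. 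This gives $H^k(X^0,\scO_{X^0}(m))=0$ for $k\geq 1$.

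The main obstacle is precisely this acyclicity computation, and its crux is the affineness of $D$, which in turn rests on $D\cap E=\emptyset$. As an alternative I could bypass local cohomology by using the \v{C}ech complex for the two-chart affine cover $X^0=(U_x\cap X^0)\cup(U_y\cap X^0)$: in each chart $U_x\cong U_y\cong\bC^3$ the locus $X^0$ is the complement of the single hypersurface $(xt_2-1)(yt_1-1)=0$, hence affine. This immediately yields $H^{\geq 2}=0$ and reduces $H^1=0$ to surjectivity of the \v{C}ech coboundary, which one checks directly from the explicit monomial bases $P,Q,R$ and their localizations at $w_1,w_2$.
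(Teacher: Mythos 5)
Your proposal is correct, and the generation step is essentially the paper's: the paper invokes the equivalence $D^b \coh X/ D^b \coh_D X \simto D^b \coh X^0$ of Orlov, which packages exactly your combination of essential surjectivity of restriction with the thick-subcategory preimage argument. The acyclicity step, however, is where you genuinely diverge. The paper observes that $X^0$ is the non-vanishing locus of the single global function $w_1 w_2$ on $X$ and quotes the standard localization formula
\begin{align*}
 H^k(\scO_{X^0}(i))
  = \varinjlim \lb
 H^k(\scO_X(i))
  \xto{w_1 w_2} H^k(\scO_X(i))
  \xto{w_1 w_2} \cdots \rb ,
\end{align*}
so the vanishing for $k \ne 0$ is immediate from \pref{th:conifold_tilting} with no further geometric input. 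You instead run the local cohomology sequence of the pair $(X, D)$ and reduce to the vanishing of $H^{k+1}_D(X, \scO_X(m))$, which you obtain from the affineness of $D$; that affineness in turn rests on the observation that $D$ is disjoint from the exceptional curve $E$ (since $w_1 = w_2 = -1$ along $E$), so that $\varphi$ identifies $D$ with a closed subscheme of the affine variety $Z$. Your argument is valid --- the spectral sequence degeneration, the identification of $\mathcal{H}^1_D$ as a colimit of coherent sheaves supported on the thickenings $nD$, and the fact that a scheme is affine if and only if its reduction is, all go through --- but it is longer and uses a special geometric feature ($D \cap E = \emptyset$) that the paper's localization argument never needs. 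What your route buys is an explicit geometric explanation of \emph{why} removing $D$ costs no higher cohomology, and your two-chart \v{C}ech alternative would even recompute the ring $\Gamma(\scO_{X^0})$ along the way; what the paper's route buys is brevity and applicability whenever the removed divisor is principal, affine or not.
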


\begin{proof}

The fact that  $\scO_{X^0} \oplus \scO_{X^0}(1)$ is a classical generator follows immediately
from the fact that $\scO_X \oplus \scO_X(1)$ is a classical generator
and the equivalence
$$
 D^b \coh X/ D^b \coh_D X \simto D^b \coh X^0
$$
of triangulated categories \cite[Lemma 2.2]{Orlov_FCIC}.
The acyclicity of $\scO_{X^0} \oplus \scO_{X^0}(1)$
follows from the acyclicity of $\scO_X \oplus \scO_X(1)$
and the description
\begin{align*}
 H^k(\scO_{X^0}(i))
  = \varinjlim \lb
 H^k(\scO_X(i))
  \xto{w_1 w_2} H^k(\scO_X(i))
  \xto{w_1 w_2} H^k(\scO_X(i))
  \xto{w_1 w_2} \cdots \rb
\end{align*}
of the cohomology as a direct limit
\cite[(1.13)]{Seidel_ASNT}.
\end{proof}

The derived category $D^b \coh_0 X$ of coherent sheaves
on $X$ supported on the exceptional locus $E$
of the resolution $\varphi : X \to Z$
is generated by $\scO_E$ and $\scO_E(-1)[1]$,
which are Koszul dual to $\scO_X$ and $\scO_X(1)$
in the sense that
\begin{align*}
 \Hom^0(\scO_X, \scO_E) &= \bC, &
 \Hom^0(\scO_X, \scO_E(-1)[-1]) &= 0, \\
 \Hom^0(\scO_X(1), \scO_E) &= 0, &
 \Hom^0(\scO_X(1), \scO_E(-1)[1]) &= \bC.
\end{align*}
The endomorphism $A_\infty$-algebra
of $\scO_E \oplus \scO_E(-1)$ is Koszul dual
to the endomorphism algebra of $\scO_X \oplus \scO_X(1)$.
A convenient way to describe it is given
by the dimer model
shown in \pref{fg:conifold_bt}.

\begin{figure}[ht]
\centering
\begin{minipage}{.4 \linewidth}
\centering
\input{conifold_bt.pst}
\caption{The dimer model}
\label{fg:conifold_bt}
\end{minipage}
\begin{minipage}{.45 \linewidth}
\centering
\input{conifold_quiver.pst}
\caption{The corresponding quiver}
\label{fg:conifold_quiver}
\end{minipage}
\end{figure}

It is a graph $G$ drawn on the real 2-torus
consisting of two nodes and four edges.
One node is painted in black,
and the other is painted in white.
The dual graph of $G$ is combinatorially identical
to $G$,
and we turn each edge of the dual graph into an arrow
by giving the orientation
such that the white node is on the right of the arrow.
This makes the dual graph of $G$
into the quiver $Q = (V, A)$
shown in Figure \ref{fg:conifold_quiver}
with two vertices $V = \{ 0, 1 \}$
and four arrows $A = \{ x, y, t_1, t_2 \}$.
For each arrow $a$ in the quiver,
there are two paths $p_+(a)$ and $p_-(a)$
from the target of $a$
to the source of $a$;
the former goes around the white node,
and the latter goes around the black node.
Then we can equip the quiver with the relation
such that $p_+(a)$ is equivalent to $p_-(a)$
for all arrows;
$\scI = (p_+(a) - p_-(a))_{a \in A}$.
One can easily see that this relation is identical
to the one in \eqref{eq:relations}.

Now the endomorphism $A_\infty$-algebra of $\scO_E \oplus \scO_E(-1)[1]$
is described as follows
\cite[Definition 2.1 and Proposition 2.2]{Futaki-Ueda_A-infinity}:

\begin{itemize}
 \item
The vertices $0$ and $1$ of $Q$ correspond
to objects $\scO_E$ and $\scO_E(-1)[1]$ respectively.
 \item
For a pair $v$ and $w$ of vertices,
the space of morphisms is given by
$$
 \homA^i(v, w) =
  \begin{cases}
   \bC \cdot \id_v & i = 0 \text{ and } v = w, \\
   \vspan \{ a \mid a : w \to v \} & i = 1, \\
   \vspan \{ a^\vee \mid a : v \to w \} & i = 2, \\
  \bC \cdot \id_v^\vee & i = 3 \text{ and } v = w, \\
   0 & \text{otherwise}.
  \end{cases}
$$
 \item
Non-zero $A_\infty$-operations are
$$
 \mA_2(x, \id_v) = \mA_2(\id_w, x) = x
$$
for any $x \in \homA(v, w)$,
$$
 \mA_2(a, a^\vee) = \id_v^\vee
$$
and
$$
 \mA_2(a^\vee, a) = \id_w^\vee
$$
for any arrow $a$ from $v$ to $w$,
$$
 \mA_k(a_1, \dots, a_k) = a_0.
$$
for any cycle $(a_0, \dots, a_k)$ of the quiver
going around a white node, and
$$
 \mA_k(a_1, \dots, a_k) = - a_0.
$$
for any cycle $(a_0, \dots, a_k)$ of the quiver
going around a black node.
 \item
The pairing
$$
 \la \bullet, \bullet \ra :
  \homA(w, v) \otimes \homA(v, w) \to \bC[3]
$$
defined by
$$
 \la a^\vee, a \ra
  = \la \id_v^\vee, \id_v \ra
  = 1
$$
and zero otherwise
makes the endomorphism $A_\infty$-algebra
into a cyclic $A_\infty$-algebra
of dimension three.
\end{itemize}

To be more explicit,
one has
\begin{align*}
 \Hom^i(\scO_E, \scO_E) &=
\begin{cases}
 \bC \cdot \id_{\scO_E} & i = 0, \\
 \bC \cdot \id_{\scO_E}^\vee & i = 3, \\
 0 & \text{otherwise},
\end{cases} \\
 \Hom^i(\scO_E(-1)[1], \scO_E(-1)[1]) &=
\begin{cases}
 \bC \cdot \id_{\scO_E(-1)[1]} & i = 0, \\
 \bC \cdot \id_{\scO_E(-1)[1]}^\vee & i = 3, \\
 0 & \text{otherwise},
\end{cases} \\
 \Hom^i(\scO_E(-1)[1], \scO_E) &=
\begin{cases}
 \bC \cdot x \oplus \bC \cdot y & i = 1, \\
 \bC \cdot t_1^\vee \oplus \bC \cdot t_2^\vee & i = 2, \\
 0 & \text{otherwise},
\end{cases} \\
 \Hom^i(\scO_E, \scO_E(-1)[1]) &=
\begin{cases}
 \bC \cdot t_1 \oplus \bC \cdot t_2 & i = 1, \\
 \bC \cdot x^\vee \oplus \bC \cdot y^\vee & i = 2, \\
 0 & \text{otherwise},
\end{cases}
\end{align*}
with $A_\infty$-operations
\begin{align*}
 \frakm_3(y, t_1, x) &= - t_2^\vee, &
 \frakm_3(t_2, y, t) &= - x^\vee, &
 \frakm_3(x, t_2, y) &= - t_1^\vee, &
 \frakm_3(t_1, x, t_2) &= - y^\vee, \\
 \frakm_3(y, t_2, x) &= t_1^\vee, &
 \frakm_3(t_1, y, t_2) &= x^\vee, &
 \frakm_3(x, t_1, y) &= t_2^\vee, &
 \frakm_3(t_2, x, t_1) &= y^\vee,
\end{align*}
and
\begin{align*}
 \frakm_2(x, x^\vee) &= \id_{\scO_E}^\vee, &
 \frakm_2(y, y^\vee) &= \id_{\scO_E}^\vee, &
 \frakm_2(s^\vee, s) &= \id_{\scO_E}^\vee, &
 \frakm_2(t_1^\vee, t_1) &= \id_{\scO_E}^\vee, \\
 \frakm_2(t_2, t_2^\vee) &= \id_{\scO_E(-1)[1]}^\vee, &
 \frakm_2(t_1, t_1^\vee) &= \id_{\scO_E(-1)[1]}^\vee, &
 \frakm_2(x^\vee, x) &= \id_{\scO_E(-1)[1]}^\vee, &
 \frakm_2(y^\vee, y) &= \id_{\scO_E(-1)[1]}^\vee.
\end{align*}
All the other non-zero $A_\infty$-operations just say
that $\id_{\scO_E}$ and $\id_{\scO_E(-1)[1]}$ are
the identity elements for $\frakm_2$.

\section{Wrapped Fukaya category}
 \label{sc:fuk}

We prove \pref{th:hms} in this section. For technical reasons, Floer theory on a non-compact fibration such as the one we are considering requires a modification of the symplectic form so that
\begin{itemize}
\item the symplectic monodromy is trivial along the horizontal boundary of the fibration.
\item the flow of the Hamiltonians $H_i$ below fiber over the flow in the base for Hamiltonian vector-field of $H_b$ in the base.
\end{itemize}
We refer the reader to the appendix for a more detailed discussion of the geometric setup for Floer cohomology of fibrations.
We set $a = \sqrt{-1}$ and $b = -\sqrt{-1}$
as in \pref{fg:base_wrapping0} for convenience in this section.
We take wrapping Hamiltonians of the forms
\begin{align}
 H_i = H_b + H_{f_1, i} + H_{f_2, i},
\end{align}
where the Hamiltonian $H_b$ is an admissible Hamiltonian in the base (see the appendix for this definition) and wraps the $z$-plane
as shown in \pref{fg:base_wrapping1}, and
the fiber Hamiltonians $H_{f_i, i}$ are admissible Hamiltonians in the fiber which wrap
the fiber either as in \pref{fg:fiber_wrapping1}
or \pref{fg:fiber_wrapping2}.
We assume that each $H_i$ is \emph{Lefschetz admissible} in the sense of \pref{sc:lefschetz_WFC} or McLean \cite{MR2497314}.
Let $\phi_t : Y^0 \to Y^0$ be the time $t$ flow
by the wrapping Hamiltonian $H_i$. The {\em wrapped Floer cohomology} is defined as
$$
 \Hom_{\scW_i}(L_j, L_k)
  = \lim_{t \to \infty} \Hom_\scF(\phi_t L_j, L_k)
$$
where $\Hom_\scF(\phi_t L_j, L_k)$ is the ordinary Floer cohomology.

\begin{remark}
Our choice of Hamiltonian is slightly different from that
in \cite{Abouzaid-Seidel_OSAVF},
but is very suitable for analyzing fibrations.
In the appendix, we provide some details concerning wrapped Floer cohomology as well as the relationship between the two approaches.
It is also important to note that,
while we don't construct $A_\infty$-operations on our wrapped Floer cohomology,
all of our wrapped Floer groups are concentrated in degree zero
and thus any such enhancement would actually be quasi-isomorphic
to its cohomology algebra.
\end{remark}

%\begin{remark} In complete generality, one must define $A_\infty$-operations using {\em cascades},
%which combine the $A_\infty$-operation and the continuation maps
%in Floer theory
%\cite{Abouzaid-Seidel_OSAVF,Abouzaid_GCGFC,Auroux_FCSP}. In our setting this is not necessary because the higher $A_\infty$-operations will vanish on the Lagrangians under consideration for degree reasons. We therefore content ourselves with the above description of the wrapped Donaldson-Fukaya category. It is also worth noting that the Hamiltonians we are using are not identical to those used in Abouzaid-Seidel. It should be possible to relate the two theories using ideas similar to \cite{MR2497314} and we hope to address this point in a future article. \end{remark}
%
% The wrapped Fukaya category with respect to $H_i$ consisting of $L_0$ and $L_1$
%will be denoted by $\scW_i$.

\begin{figure}[ht]
\begin{minipage}[b]{.5 \linewidth}
\centering
\input{base_wrapping0.pst}
\caption{The paths on the base}
\label{fg:base_wrapping0}
\end{minipage}
\begin{minipage}[b]{.5 \linewidth}
\centering
\input{fiber_wrapping0.pst}
 \caption{The Lagrangian on the fiber}
\label{fg:fiber_wrapping0}
\end{minipage}
 \end{figure}

 \begin{figure}[ht]
 \begin{minipage}[b]{.5 \linewidth}
\centering
\input{base_wrapping1.pst}
 \caption{Wrapping the base}
 \label{fg:base_wrapping1}
 \end{minipage}
 \begin{minipage}[b]{.24 \linewidth}
 \centering
 \input{fiber_wrapping1.pst}
\caption{Wrapping the fiber by $H_{f,1}$}
  \label{fg:fiber_wrapping1}
 \end{minipage}
 \begin{minipage}[b]{.24 \linewidth}
\centering
\input{fiber_wrapping2.pst}
 \caption{Wrapping the fiber by $H_{f,2}$}
 \label{fg:fiber_wrapping2}
 \end{minipage}
 \end{figure}

\begin{proposition} \label{pr:ring_isom}
There is a ring isomorphism
\begin{align} \label{eq:ring_isom}
 \bigoplus_{i, j=0}^1 \Hom_{\scW_1}(L_i, L_j)
 \simto
 \bigoplus_{i,j=0}^1 \Hom \lb \scO_X(i), \scO_X(j) \rb.
\end{align}
\end{proposition}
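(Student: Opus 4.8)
The plan is to compute both sides explicitly and match them, exploiting the fibration structure of $Y^0$ over the $z$-plane. Since the wrapping Hamiltonian $H_1 = H_b + H_{f_1,1} + H_{f_2,1}$ splits into a base part and two fiber parts, and (after modifying the symplectic form) the wrapping in the total space fibers over the wrapping in the base, the generators of the wrapped Floer complex $\Hom_\scF(\phi_t L_i, L_j)$ should be in bijection with pairs consisting of an intersection point of the wrapped base path with $\gamma_j$ in $\bCx$, together with a transverse intersection in the $T^2$-fiber over that point. First I would analyze the base: as $\phi_t$ wraps $\gamma_i$ around the two critical values $a, b$ (see \pref{fg:base_wrapping1}), the intersection points with $\gamma_j$ are indexed by how many times the wrapped path winds around each of $a$ and $b$ and by the net horizontal displacement. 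The winding around the two critical values (the discriminants of the conic fibrations $u_1 v_1 = z - a$ and $u_2 v_2 = z - b$) records the powers of $w_1$ and $w_2$, while the overall displacement records the power of $u$ or $v$.

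Next I would compute the Maslov grading. By the remark following the definition of wrapped Floer cohomology, all generators lie in degree zero; this follows from the gradings induced by the holomorphic volume form $\Omega$, under which the $L_i$ are graded Lagrangians and the fiber chords contribute no index. Granting this, the additive structure is determined by counting generators. I would then set up an explicit bijection between the generators of $\Hom_{\scW_1}(L_i, L_j)$ and the bases introduced in \pref{sc:coh}: the self-Homs $\Hom_{\scW_1}(L_i, L_i)$ match the basis $\{P_{a, i_1, i_2}\}$, while $\Hom_{\scW_1}(L_0, L_1)$ and $\Hom_{\scW_1}(L_1, L_0)$ match $\{Q_{a, i_1, i_2}\}$ and $\{R_{a, i_1, i_2}\}$ respectively, with the triple $(a, i_1, i_2)$ read off from the base and fiber data as above. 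The half-integral index $a \in \bZ + \frac{1}{2}$ for the $Q$'s and $R$'s reflects the relative shift of the reference path $\gamma_1$ against $\gamma_0$.

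The main content, and the hard part, is matching the ring structures. The product on wrapped Floer cohomology counts rigid holomorphic triangles (pairs of pants) with boundary on $\phi_t L_i$, $L_j$, $L_k$. Because wrapping fibers over the base, each such triangle projects to a triangle in the $z$-plane, and I would argue that the count factors as a product of a base contribution and a fiber contribution. The base triangles are rigid and reproduce the index addition $(a, i_1, i_2) + (b, j_1, j_2) \mapsto (a + b, \ldots)$ underlying the compositions. The essential point is the appearance of the binomial coefficients $\binom{k}{s_1}\binom{k}{s_2}$ in \eqref{eq:composition1}--\eqref{eq:composition3}: when the base windings of the two incoming generators overlap (the region measured by $k = \min\{|a|, |b|\}$), a single base triangle lifts to a whole family of holomorphic triangles distinguished by how the boundary wraps in each of the two fiber circles of $T^2 = (f')^{-1}(\gamma(t)) \times (f'')^{-1}(\gamma(t))$. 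I expect the number of lifts in each circle to be a binomial coefficient $\binom{k}{s}$, the two circles giving the product $\binom{k}{s_1}\binom{k}{s_2}$, exactly as required.

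Proving that the fiber contribution is precisely these binomial coefficients is where the real work lies, and I would model the argument on the analogous computations of \cite[Proposition 4.5]{Pascaleff_FCMPP} and \cite{Chan-Ueda_DTFHMSAN}, reducing the disk count to a combinatorial problem about lifts of a base triangle through the two conic fibrations. Once the products of all pairs of basis generators are shown to agree with \eqref{eq:composition1}--\eqref{eq:composition3}, the map assembles on each summand into the ring isomorphism \eqref{eq:ring_isom}; associativity and unitality are automatic since, as noted in the remark, the wrapped groups are concentrated in degree zero and carry an honest (rather than merely $A_\infty$) product. The comparison of our wrapping convention with that of \cite{Abouzaid-Seidel_OSAVF}, deferred to the appendix, ensures that this product agrees with the standard pair-of-pants product on wrapped Floer cohomology.
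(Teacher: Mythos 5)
Your proposal is correct and follows essentially the same route as the paper: generators indexed by a base intersection point together with two fiber indices, degree-zero concentration to settle the additive structure, and a computation of the product by projecting holomorphic triangles to the $z$-plane and lifting them through the two conic fibrations, with the binomial coefficients $\binom{k}{s_1}\binom{k}{s_2}$ coming from the number of times the base triangle crosses each discriminant --- exactly the content of the paper's Lemma~\ref{lm:Pascaleff}, adapted from Pascaleff. The only step you leave schematic that the paper carries out explicitly is the universal-cover computation locating the base triangle and counting its discriminant crossings ($k=\min\{|a|,|b|\}$ when the signs differ), but your plan identifies all the essential ingredients.
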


\begin{proof}
Let us first consider the composition
\begin{align} \label{eq:composition2-1}
 \Hom(\phi_n(L_0), L_0) \otimes \Hom(\phi_{m+n}(L_0), \phi_n(L_0))
  \to \Hom(\phi_{m+n}(L_0), L_0).
\end{align}

In the appendix, the product in wrapped Floer cohomology is defined using solutions to a perturbed holomorphic curve equation. The argument of \cite[Proposition 7.2]{Pascaleff_FCMPP}
allows us to show that, in this situation, this is equivalent to the usual product in Lagrangian Floer theory which counts $J$-holomorphic triangles with boundary on  $L_0$, $\phi_n(L_0)$, and $\phi_{m+n}(L_0)$.

The intersection points in $\phi_n(L_0) \cap L_0$
can be labeled as $p_{a, i_1, i_2}$
as in \pref{fg:intersection1}.
We view the $z$-plane as a cylinder, which is obtained by identifying the horizontal edges
of the rectangle in \pref{fg:intersection1}.
We choose a coordinate on the rectangle in such a way that
the top right and the bottom left corners
have coordinates $(1, 1)$ and $(-1, -1)$ respectively.

Intersections between the Lagrangians $\phi_n(L_0)$ and $L_0$ are parameterized by triplets of integers $(a,i_1,i_2)$.
The integer $a \in [-n+1,n-1]$ parametrizes the intersection point of the $z$-projections $\sigma_n(\gamma_0)$ and $\gamma_0$
of the Lagrangians $\phi_n(L_0)$ and $L_0$. The integers $i_1$ and $i_2$
in $[0, \lfloor (n-|a|)/2 \rfloor]$ parametrize the intersection points
on the fiber just as in \cite[Section 3.3.4]{Pascaleff_FCMPP}.

\begin{figure}[htbp]
 \centering
\input{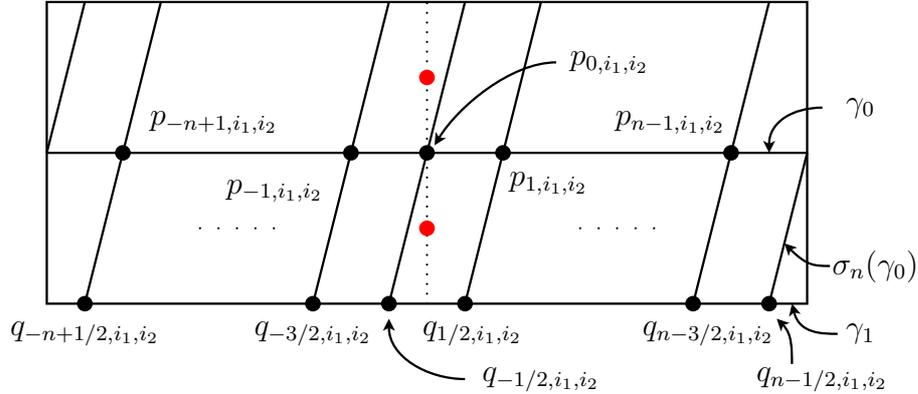}
\caption{Intersections of Lagrangians}
 \label{fg:intersection1}
\end{figure}

Our arguments will be based upon the following adaptation of Pascaleff's theorem
\cite[Proposition 4.4]{Pascaleff_FCMPP} to this setting.
Its proof follows \emph{mutatis-mutandis} from Pascaleff's paper.

\begin{lemma} \label{lm:Pascaleff}
Let $L$, $L'$, and $L''$ be Lagrangian submanifolds of $Y^0$
fibered over paths $\gamma$, $\gamma'$ and $\gamma''$ in $\bCx$.
Assume that a holomorphic triangle $u : D^2 \to \bCx$
bounded by $\gamma$, $\gamma'$ and $\gamma''$
with vertices
$o \in \gamma \cap \gamma'$,
$o' \in \gamma' \cap \gamma''$ and
$o'' \in \gamma \cap \gamma''$
%gives rise to a triangle product
%$$
% Hom(\gamma', \gamma'') \otimes Hom(\gamma, \gamma') \to Hom(\gamma'', \gamma), \quad
%  [o'] \otimes [o] \mapsto [o'']
%$$
%where
%$o \in \gamma \cap \gamma'$,
%$o' \in \gamma' \cap \gamma''$ and
%$o'' \in \gamma \cap \gamma''$.
intersects the discriminants $a$ and $b$ in $\bCx$
exactly $d_1$ and $d_2$ times respectively.
Then holomorphic sections over $u$ contributes to the triangle product
$
 \Hom(L', L'') \otimes \Hom(L, L') \to \Hom(L, L'')
$
as
$$
 \frakm_2(o'_{j_1, j_2}, o_{i_1, i_2})
  = \sum_{s_1=0}^{d_1} \sum_{s_2 = 0}^{d_2}
   \binom{d_1}{s_1} \binom{d_2}{s_2} o''_{i_1+j_1+s_1, i_2+j_2+s_2},
$$
where $o_{i_1,i_2} \in L \cap L'$ is the intersection point
above $o \in \gamma \cap \gamma'$,
which is the $i_1$-th one from the bottom in the $u_1v_1$-direction
and the $i_2$-th one from the bottom in the $u_2 v_2$-direction.
\end{lemma}

The universal cover of the cylinder
in \pref{fg:intersection1}  is an infinite strip
$
 \{ (s, t) \in \bR^2 \mid -1 \le s \le 1 \}.
$
A lift of the $z$-projection $\gamma_{0,n}$
of the wrapped Lagrangian $\phi_n(L_0)$
to the universal cover
is given by a line with slope $n$,
passing through $(0, k)$ with $k \in \bZ$.
The discriminants of the conic fibrations are given
by $(0, 1/4)$ and $(0, -1/4)$ respectively.
The projection of the intersection point
$p_{b, j_1, j_2} \in \Hom(\phi_n(L_0), L_0)$
has the $s$-coordinate $b/n$,
and we choose the lift to the universal cover
to be $(b/n, 0)$.

Consider the lift of $\gamma_{0,n}$
passing through $(b/n, 0)$. The induced lift of the intersection point
corresponding to $p_{a, i_1, i_2} \in \Hom(\phi_{m+n}(L_0), \phi_n(L_0))$
will have coordinate $(a/m, n a / m - b)$.
If we then take the lift of $\gamma_{0,m+n}$
passing though this point, it intersects with the lift of $\gamma_0$
at $((a+b)/(m+n), 0)$
as shown in \pref{fg:triangle1} or \pref{fg:triangle2}
depending on the order of $a$ and $b$.

\begin{figure}[htbp]
\begin{minipage}[b]{.5 \linewidth}
 \centering
 \input{triangle1.pst}
\caption{The case $a<b$}
\label{fg:triangle1}
\end{minipage}
\begin{minipage}[b]{.5 \linewidth}
\centering
\input{triangle2.pst}
\caption{The case $b<a$}
\label{fg:triangle2}
\end{minipage}
\end{figure}

In either case,
one can see from \pref{fg:triangle3}
or \pref{fg:triangle4}
that the triangle hits both of the discriminants
$(0, -1/4+\bZ)$ and $(0, 1/4+\bZ)$ $k$ times,
where $k$ is $\min \{ |a|, |b| \}$ if $a$ and $b$ has different signs
and $0$ otherwise.
Then one has
\begin{align} \label{eq:composition4}
 \frakm_2(p_{b, j_1, j_2}, p_{a, i_1, i_2})
  &= \sum_{s_1, s_2=0}^k \binom{k}{s_1} \binom{k}{s_2}
   p_{a+b, i_1+j_1+s_1, i_2+j_2+s_2}
\end{align}
by Pascaleff's formula,
in agreement with \eqref{eq:composition1}.

\begin{figure}[htbp]
\begin{minipage}[b]{.5 \linewidth}
\centering
\input{triangle3.pst}
\caption{The case $0 < b < -a$}
 \label{fg:triangle3}
 \end{minipage}
\begin{minipage}[b]{.5 \linewidth}
\centering
 \input{triangle4.pst}
\caption{The case $0 < -a < b$}
\label{fg:triangle4}
\end{minipage}
\end{figure}

Next we consider the composition
$$
 \Hom(\phi_n(L_1), L_0) \otimes \Hom(\phi_{m+n}(L_0), \phi_n(L_1))
  \to \Hom(\phi_{m+n}(L_0), L_0).
$$
A lift of the $z$-projection $\gamma_{1,n}$
of the wrapped Lagrangian $\phi_n(L_1)$
to the universal cover
is given by a line with slope $n$
passing through $(0, k+1/2)$ with $k \in \bZ$.
The intersections of the curves
on the $z$-planes are as in \pref{fg:triangle1}
or  \pref{fg:triangle2} again,
with $\gamma_{0,n}$ replaced with $\gamma_{1,n}$
and $a$ and $b$ being half-integers.
One can see
from \pref{fg:triangle3}  and \pref{fg:triangle4}
that the triangle hits
the discriminants at $(0, -1/4+\bZ)$
and $(0, 1/4+\bZ)$
for $k_1$ times and $k_2$ times respectively,
where
$k_1 = \min \{ |a|-1/2 , |b|-1/2 \}+1$ and
$k_2 = \min \{ |a|-1/2 , |b|-1/2 \}$
if $a$ and $b$ have different signs,
and $k_1=k_2=0$ otherwise.
Then one has
\begin{align} \label{eq:composition2-4}
 \frakm_2(r_{b, j_1, j_2}, q_{a, i_1, i_2})
  &= \sum_{s_1=0}^{k_1} \sum_{s_2=0}^{k_2} \binom{k}{s_1} \binom{k}{s_2}
   p_{a+b-1, i_1+j_1+s_1, i_2+j_2+s_2}.
\end{align}
This is in complete agreement with \eqref{eq:composition3}. Other compositions can be calculated similarly, and \pref{pr:ring_isom} is proved.
\end{proof}

The choice of partial wrapping function $H_{f_i,1}$ corresponds to the fact that the mirror of the resolved conifold $X$ is in fact the Landau-Ginzburg model $(Y^0, u_1+u_2)$. See \cite{Abouzaid-Auroux-Katzarkov_LFB} for more discussion. Since wrapping by $H_{f_i, 2}$ corresponds to multiplication by $w_i$,
one obtains the following:

\begin{corollary} \label{cr:ring_isom}
There is a ring isomorphism
\begin{align} \label{eq:ring_isom2}
 \bigoplus_{i, j=0}^1 \Hom_{\scW_2}(L_i, L_j)
 \simto
 \bigoplus_{i,j=0}^1 \Hom \lb \scO_{X^0}(i), \scO_{X^0}(j) \rb.
\end{align}
\end{corollary}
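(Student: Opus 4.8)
The plan is to deduce the corollary from \pref{pr:ring_isom} by recognizing that both sides of the claimed isomorphism are obtained from the corresponding sides of \pref{pr:ring_isom} by the single operation of inverting the equation $w_1 w_2$ of the divisor $D$, and that this operation is compatible with the isomorphism already constructed. First, on the algebraic side, I would use the direct-limit description
$$
 H^k(\scO_{X^0}(i)) = \varinjlim \lb H^k(\scO_X(i)) \xto{w_1 w_2} H^k(\scO_X(i)) \xto{w_1 w_2} \cdots \rb
$$
recalled in the proof of \pref{cr:conifold_tilting} to identify $\bigoplus_{i,j} \Hom(\scO_{X^0}(i), \scO_{X^0}(j))$ with the localization of $\bigoplus_{i,j}\Hom(\scO_X(i), \scO_X(j))$ at the element $w_1 w_2$. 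Since $w_1$ and $w_2$ are themselves regular functions, this localization contains $w_1^{-1} = w_2 (w_1 w_2)^{-1}$ and $w_2^{-1}$, so concretely it enlarges the index sets of the bases $\{P_{a,i_1,i_2}\}$, $\{Q_{a,i_1,i_2}\}$, $\{R_{a,i_1,i_2}\}$ from $i_1, i_2 \in \bN$ to $i_1, i_2 \in \bZ$, while leaving the composition formulas \eqref{eq:composition1}--\eqref{eq:composition3} formally unchanged, as the disc counts $k$, $k_1$, $k_2$ there depend only on the base parameters $a, b$ and not on the signs of $i_1, i_2$.

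Next, on the Floer side, I would analyze the effect of replacing the fiber-wrapping Hamiltonian $H_{f_i,1}$ by $H_{f_i,2}$. As \pref{fg:fiber_wrapping1} and \pref{fg:fiber_wrapping2} indicate, $H_{f_i,2}$ wraps each conic fiber once more than $H_{f_i,1}$, producing, for each generator $p_{a,i_1,i_2}$ (and likewise $q_{a,i_1,i_2}$, $r_{a,i_1,i_2}$), further families of intersection points indexed by negative $i_1$ or $i_2$. The key geometric input, as noted in the discussion preceding the corollary, is that the continuation map associated with one additional unit of fiber-wrapping by $H_{f_i,2}$ is the triangle product with the generator corresponding to multiplication by $w_i$. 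Taking the direct limit $t \to \infty$ in the definition of $\Hom_{\scW_2}$ then inverts $w_1$ and $w_2$, so that the generators of $\Hom_{\scW_2}(L_i, L_j)$ are indexed by the enlarged set $(a, i_1, i_2) \in \bZ \times \bZ^2$ (respectively $(\bZ + \frac{1}{2}) \times \bZ^2$), in exact correspondence with the localized algebra from the first step.

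With the two sides matched generator by generator, I would verify that the triangle products continue to obey the Pascaleff formula of \pref{lm:Pascaleff}. The relevant holomorphic triangles project to the \emph{same} base triangles of \pref{fg:triangle1}--\pref{fg:triangle4} used in \pref{pr:ring_isom}, so the disc counts $d_1, d_2$ and hence the binomial structure of \eqref{eq:composition4} and \eqref{eq:composition2-4} persist verbatim for all $i_1, i_2 \in \bZ$. This shows that the ring isomorphism of \pref{pr:ring_isom} extends to a ring isomorphism onto the localization, which by the first step is precisely $\bigoplus_{i,j}\Hom(\scO_{X^0}(i), \scO_{X^0}(j))$.

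The step I expect to be the main obstacle is the precise Floer-theoretic identification of the extra-wrapping continuation map with multiplication by $w_i$, together with the verification that the resulting colimit of wrapped Floer groups is exactly the algebraic localization at $w_1 w_2$, rather than a larger or smaller module. This amounts to confirming that the additional intersection points created by $H_{f_i,2}$ are in bijection with the negative-exponent monomials and that no spurious holomorphic strips perturb the continuation maps; once this dictionary is in place, the ring structure follows formally from the unchanged base-triangle computation of \pref{pr:ring_isom}.
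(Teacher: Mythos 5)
Your proposal is correct and follows essentially the same route as the paper, which deduces the corollary from \pref{pr:ring_isom} by the single observation that wrapping by $H_{f_i,2}$ corresponds to multiplication by $w_i$, so that passing to $\scW_2$ localizes the endomorphism ring at $w_1w_2$, matching the restriction from $X$ to $X^0 = X \setminus \{w_1w_2 = 0\}$. The step you flag as the main obstacle (the identification of the extra fiber-wrapping with multiplication by $w_i$ and of the colimit with the algebraic localization) is precisely the content the paper asserts in one sentence without further elaboration, so your expansion of it is a faithful, more detailed version of the intended argument.
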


\pref{th:hms} is an immediate consequence of
\pref{cr:conifold_tilting},
\pref{cr:ring_isom}, and
\pref{th:appendix2}.

\section{Mirror symmetry for vanishing cycles}
 \label{sc:fuk0}

For a path $\gamma : [0,1] \to \bCx$ on the $z$-plane,
the union
\begin{align} \label{eq:compact_lag}
 S_\gamma := \bigcup_{t \in [0,1]} \lc (\gamma(t), u_1, v_1, u_2, v_2) \in Y^0
  \relmid |u_1|=|v_1|, \ |u_2|=|v_2| \rc
\end{align}
gives a compact Lagrangian submanifold of $Y^0$,
which has boundaries in general.
Let $S_0$ and $S_1$ be Lagrangian 3-spheres in $Y^0$,
which are obtained in this way
from the paths shown in \pref{fg:compact_lagrangians2}.
We prove \pref{th:fuk0} below in this section.
\pref{th:hms0} follows immediately
since $D^b \coh_0 X^0$ is generated by $\scO_E$ and $\scO_E(-1)$
as a triangulated category.

\begin{theorem} \label{th:fuk0}
Let $\scF_0$ be the Fukaya category of $Y^0$
consisting of $S_0$ and $S_1$.
Then $\scF_0$ is quasi-equivalent to the full subcategory
of (the dg enhancement of) $D^b \coh X^0$
consisting of $\scO_E$ and $\scO_E(-1)$.
\end{theorem}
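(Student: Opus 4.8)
The plan is to compute the endomorphism $A_\infty$-algebra
$A = \bigoplus_{i,j=0}^1 \CF^*(S_i, S_j)$
directly from the geometry of the double conic fibration $f : Y^0 \to \bCx$, and to identify it, as a cyclic $A_\infty$-algebra, with $B = \bigoplus_{i,j} \Hom^*\!\big(\scO_E^{\,\oplus}, \scO_E(-1)[1]^{\,\oplus}\big)$, the algebra computed via the dimer model just above (the shift $[1]$ is a harmless relabeling of one object, so the two full subcategories coincide up to a shift autoequivalence). Since $D^b\coh_0 X^0$ is generated by $\scO_E$ and $\scO_E(-1)$, an $A_\infty$-quasi-isomorphism $A \simeq B$ yields the claimed quasi-equivalence. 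Conceptually this is the compact, Koszul-dual counterpart of \pref{th:hms}: $S_0, S_1$ are the Floer-theoretic linking objects of $L_0, L_1$, just as $\scO_E, \scO_E(-1)[1]$ are Koszul dual to $\scO_{X^0}, \scO_{X^0}(1)$.

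First I would pin down the underlying cohomology. As $S_0$ and $S_1$ are exact Lagrangian $3$-spheres bounding no nonconstant disc, $\HF^*(S_i, S_i) \cong H^*(S^3;\bC) = \bC[0]\oplus\bC[3]$. For the off-diagonal groups, arrange $\sigma_0$ and $\sigma_1$ to meet transversally in the $z$-plane; since they differ by winding number one, the conic-fibration picture produces, in $\CF^*(S_0, S_1)$, two generators in degree $1$ and two in degree $2$, matching $t_1, t_2$ and $x^\vee, y^\vee$, and symmetrically $x, y$ and $t_1^\vee, t_2^\vee$ in $\CF^*(S_1, S_0)$. The Maslov gradings are fixed by the index computation over each endpoint $a, b$, where one $S^1$-factor of the $T^2$-fiber collapses. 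Next, the products $\frakm_2$ are evaluated by \pref{lm:Pascaleff}, counting holomorphic triangles fibered over the $z$-plane: the pairing of a degree-$1$ generator with its degree-$2$ dual yields the top class, reproducing $\frakm_2(x, x^\vee)=\id_{\scO_E}^\vee$ and its companions, while products of two degree-$1$ classes vanish because no triangle in the $z$-plane joins the relevant endpoints.

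The crux is the triple product $\frakm_3$ on the degree-$1$ generators, which must land in the degree-$2$ duals with the exact coefficients $\frakm_3(y, t_1, x) = -t_2^\vee$, $\frakm_3(y, t_2, x) = t_1^\vee$, and so on. These are counts of holomorphic hexagons in $Y^0$, which I would again analyze as sections over polygons in the $z$-plane by a Pascaleff-type argument; the cyclic symmetry forced by Poincar\'e duality on the $S^3$'s, together with the Calabi--Yau structure of $Y^0$ with respect to $\Omega$, constrains the signs. Finally, since $A$ is concentrated in degrees $0$ through $3$ and carries a nondegenerate cyclic pairing of degree $3$, cyclicity and degree reasons force $\frakm_{\ge 4}$ to vanish on a minimal model, and uniqueness of the cyclic $A_\infty$-structure realizing the computed $\frakm_2$ and $\frakm_3$ identifies $A$ with the dimer-model algebra $B$.

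I expect the third step to be the main obstacle: reliably enumerating the hexagons contributing to $\frakm_3$ and fixing their orientations so as to reproduce the cyclic dimer-model coefficients (including the relative signs between cycles around the black and white nodes). One must also verify transversality and the absence of extraneous disc or sphere bubbling, so that the fibered, Pascaleff-style count genuinely captures every contribution to $\frakm_2$ and $\frakm_3$.
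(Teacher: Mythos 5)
Your route---extracting the $A_\infty$-structure on $\bigoplus_{i,j}\CF^*(S_i,S_j)$ by counting holomorphic polygons fibered over the $z$-plane---is genuinely different from the paper's, which counts no holomorphic curves at all: the paper first shows $S_0\cup S_1$ is exact, then invokes Abouzaid's topological chain-level model for exact Lagrangians intersecting \emph{cleanly}, and computes $\frakm_3$ and the vanishing of the higher products by an explicit Merkulov homotopy transfer on simplicial cochains of the Hopf decomposition of $S^3$. Your plan has two concrete gaps. First, $S_0$ and $S_1$ do not intersect transversally, and cannot be made to do so within the fibered class: over any point common to the two paths their $T^2$-fibers are literally the same torus $\{|u_1|=|v_1|,\ |u_2|=|v_2|\}$, and at the shared endpoints $a,b$ they meet along the two vanishing circles, so $S_0\cap S_1$ is a clean intersection along $S^1\sqcup S^1$. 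The ``two degree-$1$ and two degree-$2$ generators'' are Morse--Bott classes of these circles, not transverse intersection points, and \pref{lm:Pascaleff}---stated for transverse corners and for triangles---does not apply off the shelf to the quadrilaterals (not hexagons: $\frakm_3$ has three inputs and one output) that would compute $\frakm_3$. Making this rigorous requires either a Morse--Bott polygon count or a perturbation that destroys the fibered structure; the paper's use of the clean-intersection chain model is precisely how it sidesteps this.

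Second, the assertion that cyclicity and degree reasons force $\frakm_n=0$ for $n\ge 4$ is not correct. Take five degree-one inputs alternating between the two objects: the output of $\frakm_5$ has degree $5+2-5=2$ and lies in an off-diagonal morphism space, whose degree-two part is two-dimensional (spanned by $x^\vee,y^\vee$ or $t_1^\vee,t_2^\vee$); moreover pairing such an output against a sixth degree-one input has total degree three, so the cyclic pairing does not exclude it either. This is why the paper needs \pref{lm:higher_m's}, an honest argument showing that in its chosen model no cohomology class arises as a product involving two cochains in the image of the homotopy $\mathrm{Q}$. Relatedly, your final appeal to ``uniqueness of the cyclic $A_\infty$-structure realizing the computed $\frakm_2$ and $\frakm_3$'' would need an intrinsic-formality-type statement that you have not supplied. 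The skeleton of your argument (ranks and gradings of the Floer groups, matching against the dimer-model algebra, reduction of the theorem to an $A_\infty$-quasi-isomorphism) agrees with the paper, but the two steps above are exactly where the real content lies, and as written they would fail.
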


\begin{proof}
First note that the union $S_0 \cup S_1$ is {\em exact}
in the sense that the symplectic form $\omega$ vanishes
on $\pi_2(Y^0, S_0 \cup S_1)$.
To see this,
one can look at the exact sequence
$$
 \cdots
  \to \pi_2(Y^0)
  \to \pi_2(Y^0, S_0 \cup S_1)
  \to \pi_1(S_0 \cup S_1)
  \to \pi_1(Y^0)
  \to \cdots
$$
of homotopy groups;
the symplectic form $\omega$ vanishes
on the image of $\pi_2(Y^0)$
since $Y^0$ is an exact symplectic manifold,
whereas the group $\pi_1(S_0 \cup S_1) \cong \bZ$ injects
to $\pi_1(Y^0)$.

The exactness of $S_0 \cup S_1$ allows us
to use the chain model for the Fukaya category of the plumbing
by Abouzaid \cite[Appendix A]{MR2786590}.
Let $Q_1$ and $Q_2$ be a pair of graded exact Lagrangian submanifolds
in an exact symplectic manifold equipped with a trivialization of the canonical bundle.
Assume that $Q_1$ and $Q_2$ intersect cleanly
along a submanifold $B$,
which consists of $r$ connected components
$B^1, \ldots, B^k$;
$$
 B = Q_1 \cap Q_2, \qquad
 B = B^1 \sqcup \cdots \sqcup B^r.
$$
Since $Q_1$ and $Q_2$ are Lagrangian submanifolds
intersecting cleanly along $B$,
the normal bundles $N_{Q_1} B$ and $N_{Q_2} B$ are isomorphic
as real vector bundles.
Choose closed tubular neighborhoods $N_i$ of $Q_i$
and triangulations $\scQ_i$ of $Q_i$
such that $\scQ_i$ induce triangulations $\scN_i$ of $N_i$
and the isomorphism $N_{Q_1} B \cong N_{Q_2} B$
induces a cellular homeomorphism
$\scN_1 \cong \scN_2$.
Let $\scN = \scN^1 \sqcup \cdots \sqcup \scN^r$
be the decomposition of the abstract simplicial complex
$\scN = \scN_1 \cong \scN_2$
into connected components.
Then the chain model for the Fukaya category
consisting of $Q_i$ is given by
\begin{align*}
 \Hom(Q_i, Q_i) &= \C^*(\scQ_i), \\
 \Hom(Q_1, Q_2) &= \bigoplus_{k=1}^r \C^*(\scN^k)[m_k], \\
 \Hom(Q_2, Q_1) &= \bigoplus_{k=1}^r \C^*(\scN^k, \partial \scN^k)[-m_k].
\end{align*}
Here integers $m_k$ comes from the gradings of the Lagrangian
submanifolds.

Now we apply this construction
to the case
when $Q_1 = S_0 \cong \bS^3$, $Q_2 = S_1 \cong \bS^3$,
$B = B^1 \sqcup B^2 = \bS^1 \sqcup \bS^1$ and
$\scN^k \cong \bD^2 \times \bS^1$;
\begin{align*}
 \Hom(S_i , S_i) &\cong \C^*(\bS^3), \\
 \Hom(S_1,S_0) &\cong \C^*(\bD^2 \times \bS^1)[-1] \oplus \C^*(\bD^2 \times \bS^1)[-1], \\
 \Hom(S_0,S_1) &\cong \C^*(\bD^2 \times \bS^1, \partial \bD^2 \times \bS^1)[1]
  \oplus \C^*(\bD^2 \times \bS^1,\partial \bD^2 \times \bS^1)[1].
\end{align*}
In this formula, cochains denote simplicial cochains with respect to a suitable triangulation.
We have chosen the gradings on $S_0$ and $S_1$ in such a way that
$m_1 = m_2 = -1$.
We view each copy of $\bS^3$ via its Hopf decomposition
$$
 \bS^3 = \bD^2 \times \bS^1 \cup_{\bT^2} \bD^2 \times \bS^1.
$$
In our example,
we can work with the smaller cellular model described below,
which can be easily seen to be that we get a quasi-isomorphic dg-category
if we choose to view each $\bD^2$ as a two simplex $\Delta_2$ and
$\bS^1$ as the union of three one simplices $\Delta_1$ in the usual way.

We have the cochain models
%$\C^*(\scN^k) = \C^*(\bD^2 \times \bS^1)$:
\begin{align*}
% \bigg(
%  \bC[x, y, z]/(x^2,y^2, z^2, xz ) , \bsd(x)=z
% \bigg)
 \C^*(\scN^1)
  &= \vspan_{\bC}
\begin{Bmatrix}
  & &\rnode{x}{x} & & \rnode{xy}{xy} & \\
  1 & & & & & & \rnode{yz}{yz} \\
  & & y & & \rnode{z}{z}
\end{Bmatrix}, \\
\psset{nodesep=3pt,arrows=->,shortput=nab}
\ncline{x}{z}
\ncline{xy}{yz}
%
% \bigg(
%  \bC[x, y, w]/(x^2,y^2, w^2, yw ) , \bsd(y)=w
% \bigg)
 \C^*(\scN^2)
  &= \vspan_{\bC}
\begin{Bmatrix}
  & &\rnode{x}{x} & & \rnode{xy}{xy} & \\
  1 & & & & & & \rnode{yw}{yw} \\
  & & y & & \rnode{w}{w}
\end{Bmatrix},
\psset{nodesep=3pt,arrows=->,shortput=nab}
\ncline{x}{w}
\ncline{xy}{yw}
\end{align*}
for $\C^*(\scN^k) = \C^*(\bD^2 \times \bS^1)$.
Arrows show the differential
in such a way that $\bsd(x) = z$ and similarly for other arrows.
The cohomological degrees are given by
$\deg(x)=\deg(y)=1$ and $\deg(z)=\deg(w)=2$.
The elements $w$ and $z$ are the cellular cochains which are dual to the disc $\bD^2$
as shown in \pref{fg:hopf}.
We use the same symbols $x$ and $y$ for those generators which will be identified under the Hopf gluing.
In the first copy of $\bD^2 \times \bS^1$,
$x$ is the cochain dual to the boundary of $\bD^2$ and $y$ is the cellular cochain dual to the $\bS^1$ factor.
The roles of these cochains are reversed in the second copy of $\bD^2 \times \bS^1$.
\begin{figure}[h]
\centering
\input{hopf.pst}
\caption{Hopf decomposition of $\bS^3$}
\label{fg:hopf}
\end{figure}
The chain model for one copy of $\C^*(\partial \bD^2 \times \bS^1)$ is given
by
$$
 \C^*(\partial \bD^2 \times \bS^1)
  = \vspan_{\bC}
\begin{Bmatrix}
  & & x\\
  1 & & & & xy \\
  & & y
\end{Bmatrix},
$$
and similarly for the other copy.
Accordingly, we have the chain model
\begin{align*}
 \C^*(\bS^3)
  = \vspan_{\bC}
\begin{Bmatrix}
 && && \rnode{xy}{xy} \\
 && \rnode{x}{x} && && \rnode{yz}{yz} \\
 1 && && \rnode{z}{z} \\
 && \rnode{y}{y} && && \rnode{xw}{xw} \\
 && && \rnode{w}{w}
\end{Bmatrix},
\psset{nodesep=3pt,arrows=->,shortput=nab}
\ncline{x}{z}
\ncline{y}{w}
\ncline{xy}{yz}
\ncline{xy}{xw}
\end{align*}
for $\C^*(\bS^3)$
where $\bsd(x y) = y z + xw$.
Using these basic models,
we construct the chain level model for the category as follows:
For $\C^*(\bS^3)$,
we take the above cochain algebra.
For the other groups,
we preserve the letters corresponding to the above models
to make clear the geometric origins of the generators and use $\overrightarrow{m}$
to denote a morphism in $\Hom(S_1,S_0)$ and
$\overleftarrow{m}$ to denote a morphism in $\Hom(S_0,S_1)$.
For $\Hom(S_0,S_1)$ we take as a basis:
$$ \overleftarrow{z} \quad  \overleftarrow{yz} \quad \overleftarrow{w} \quad  \overleftarrow{xw} , \quad \bsd=0.$$
We have that $\mathrm{Hom(S_1,S_0)}$ is the sum of two complexes,
$$ \overrightarrow{u_{1}} \quad \overrightarrow{x_{1}} \quad \overrightarrow{y_{1}} \quad \overrightarrow{z_{1}} \quad \overrightarrow{xy_{1}} \quad \overrightarrow{yz_{1}}, $$
$$ \overrightarrow{u_{2}} \quad \overrightarrow{x_{2}} \quad \overrightarrow{y_{2}} \quad \overrightarrow{w_{2}} \quad \overrightarrow{xy_{2}} \quad \overrightarrow{xw_{2}}. $$
The differential is as in the model for $\C^*(\bD^2 \times \bS^1)$.
Compositions are the natural ones
described in \cite[Section 2.1]{MR2786590}.

\begin{lemma} \label{lm:m_3}
We have
\begin{align*}
\m_3(\overrightarrow{u_1},\overleftarrow{z},\overrightarrow{u_2}) &=\overrightarrow{x_{2}}, \\
\m_3(\overleftarrow{w},\overrightarrow{u_1},\overleftarrow{z}) &= \overleftarrow{yz}, \\
\m_3(\overrightarrow{u_2}, \overleftarrow{w}, \overrightarrow{u_1}) &= -\overrightarrow{y_{1}}, \\
\m_3(\overleftarrow{z}, \overrightarrow{u_2},\overleftarrow{w}) &= -\overleftarrow{xw}.
\end{align*}
The other $m_3$'s are determined by the natural cyclic Calabi-Yau structure.
\end{lemma}

\begin{proof}
%We use formulas in \cite{Merkulov_SHAKM}
%for homological perturbation theory:
Given a dga $(V,\bsd)$,
we choose
\begin{itemize}
 \item
an injection $\mathrm{i}: \mathbb{H}^*(V) \to V$,
 \item
a projection operator $\mathrm{P}: V \to \mathrm{i}(\bH^*(V))$
such that
$
 \mathrm{P}|_{\mathrm{i}(\mathbb{H}^*(V))}
  = \id_{\mathrm{i}(\mathbb{H}^*(V))},
$
and
 \item
a chain homotopy $\mathrm{Q}$ such that
$
 \id - [\bsd,\mathrm{Q}] = \mathrm{P}.
$
\end{itemize}
Then we define a series of linear maps
$$ \lambda_n : V^{\otimes n} \to V$$
by setting
$$ \lambda_2(v_1,v_2)= v_1 \cdot v_2$$
and inductively define
$$ \lambda_n(v_1, \cdots , v_n) := (-1)^{n-1}
[\mathrm{Q}\lambda_{n-1}(v_1, \cdots , v_{n-1})] v_n - (-1)^{n \deg(v_1)} v_1 [\mathrm{Q}\lambda_{n-1}(v_2, \cdots , v_n)]$$ $$ - \sum_{k,l\geq 2} (-1)^{k+(l-1)(\deg(v_1)+\cdots+\deg(v_k))}[\mathrm{Q}\lambda_k(v_1, \cdots , v_k)] [\mathrm{Q}\lambda_l(v_{k+1},
\cdots, v_n)]. $$
for $n \geq 3$.
Now the operators
$$
 \m_n: \mathbb{H}^*(V)^{\otimes n} \to \mathbb{H}^*(V)
$$
are defined by
$
 \m_n = \mathrm{P} \circ \lambda_n.
$
%One of the results in \cite{Merkulov_SHAKM} is then the following
\begin{theorem}[\cite{Merkulov_SHAKM}]
The operators $\m_n$ define the structure of an $A_{\infty}$-algebra on $\bH^*(V)$
quasi-isomorphic to $(V, \bsd)$.
\end{theorem}

We now compute $\mathrm{Q}$ in our setting.
Since the differential vanishes on our model for $\Hom(S_0, S_1)$,
the operator $\mathrm{Q}$ also vanishes on $\Hom(S_0,S_1)$.
On $\Hom(S_0,S_0)$ and $\Hom(S_1, S_1)$,
we can set
$$
 \mathrm{Q}(z)=x, \ \mathrm{Q}(w)=y, \ \mathrm{Q}(yz)=\frac{xy}{2}, \ \mathrm{Q}(xw)=\frac{xy}{2}
$$
and everything else to be zero.
In the first summand of $\Hom(S_1,S_0)$,
the operator $\mathrm{Q}$ is given by
$$
 \mathrm{Q}(\overrightarrow{z_{1}})= \overrightarrow{x_{1}},
  \
 \mathrm{Q}(\overrightarrow{yz_{1}})= \overrightarrow{xy_{1}}.
$$
A similar formula holds in the second summand.

To compute $\m_3(\overrightarrow{u_1},\overleftarrow{z},\overrightarrow{u_2})$,
we notice that $\overleftarrow{z}\cdot \overrightarrow{u_2}=0$,
so that
\begin{align*}
 \m_3(\overrightarrow{u_1},\overleftarrow{z},\overrightarrow{u_2})
  &= (1-[d,\mathrm{Q}])(\mathrm{Q}(\overrightarrow{u_1}\cdot \overleftarrow{z}) \cdot \overrightarrow{u_2}) \\
  &= (1-[d,\mathrm{Q}])(\mathrm{Q}(z) \cdot \overrightarrow{u_2}) \\
  &= (1-[d,\mathrm{Q}])(x \cdot \overrightarrow{u_2}) \\
  &= (1-[d,\mathrm{Q}])(\overrightarrow{x_2}) \\
  &= \overrightarrow{x_2}.
\end{align*}
The other formulas can be calculated similarly,
and \pref{lm:m_3} is proved.
\end{proof}

We also have the following result:
\begin{lemma}\label{lm:higher_m's}
All $A_\infty$-operations $\m_n$ for $n \geq 4$ vanish.
\end{lemma}
\begin{proof}
We argue using Merkulov's formula by showing that
a higher product cannot have a non-trivial component in any of the cohomology classes.
First we notice that no cohomology class can be written
as a product of two cochains which are in the image of $\mathrm{Q}$.
To avoid repeated arguments,
we will demonstrate why we cannot have
$$
 \m_n(x_1,\cdots,x_n) = \overleftarrow{yz}.
$$
All other cases can be addressed using the same type of arguments.

The only way to write $\overleftarrow{yz}$ as a non-trivial product is as
$y \cdot \overleftarrow{z}$.
Using Merkulov's formula and the above observation,
we can assume without loss of generality that
$\mathrm{Q}\lambda_{n-1}(x_1, \cdots ,x_{n-1})$ has non-trivial coefficient
in the basis vector $y$
and that $x_n$ has a non-trivial component in the basis vector $\overleftarrow{z}$.
This would in turn imply that
$\lambda_{n-1}(x_1, \cdots, x_{n-1})$ has non-trivial coefficient in $w$,
which is not possible unless $n=3$ because $w$ cannot be written as the product of cochains,
$s_1s_2$, where  either $s_1$ or $s_2$ is in the image of $\mathrm{Q}$.
\end{proof}

\pref{lm:m_3} and \pref{lm:higher_m's} show that
the $A_\infty$-operations on $\scF_0$ is identical
to those for the endomorphism algebra of $\mathcal{O}_E \oplus \mathcal{O}_E(-1)$
described in \pref{sc:coh},
and \pref{th:fuk0} is proved.
\end{proof}

For the remainder of this section,
we offer an alternative approach to \pref{th:fuk0},
which stands on a conjecture that we were not able to verify,
but hope is not outside the reach of current technology.
%Let $Y^0$ be a fiber product of conic bundles over $(\mathbb{C}^*)^k$.
Let
$
 \mathcal{CO}: SH^0(Y^0) \to WF(L,L')
$
be the open closed string map
considered by Abouzaid \cite{Abouzaid_GCGFC}.
This makes the derived category of the wrapped Fukaya category of $Y^0$
into a triangulated category over $SH^0(Y^0)$,
and allows one to talk about the {\em Serre functor} over $SH^0(Y^0)$
in the sense of \cite[Definition 2.5]{Bezrukavnikov-Kaledin_2004}.
A triangulated category over $SH^0(Y^0)$ is {\em Calabi-Yau}
if the Serre functor over $SH^0(Y^0)$ is isomorphic to the shift functor
$\bullet[n]$ for some $n \in \bZ$.

%\pref{th:fuk0} can also be proved
%by the Calabi-Yau trick
%going back to \cite{Bridgeland-King-Reid},
%if we assume the following:

\begin{conjecture} \label{cj:PD}
The morphism $\mathcal{CO}$ makes the derived category
of the wrapped Fukaya category of $Y^0$
into a Calabi-Yau category over $SH^0(Y^0)$.
\end{conjecture}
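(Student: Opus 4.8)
The plan is to deduce the conjecture from the general duality theory of wrapped Fukaya categories of non-degenerate Liouville manifolds, using the mirror $B$-side of \pref{th:hms} only as an independent check. Recall that $Y^0$ is a smooth affine variety, hence a Liouville manifold, and that the holomorphic volume form $\Omega$ trivializes its canonical bundle; this gives $c_1(Y^0)=0$ together with a $\bZ$-grading on the wrapped Fukaya category, so the expected Calabi-Yau dimension is $n=3$. The first and decisive step is to establish that the wrapped category is \emph{non-degenerate}, in the sense that the unit of $SH^*(Y^0)$ lies in the image of the open-closed map $HH_*(\scW)\to SH^*(Y^0)$. Granting this, Abouzaid's generation criterion \cite{Abouzaid_GCGFC} shows that $L_0$ and $L_1$ split-generate the wrapped Fukaya category — consistent with \pref{th:hms}, where $\scO_{X^0}$ and $\scO_{X^0}(1)$ generate $D^b\coh X^0$ — and, more importantly, it feeds into Ganatra's duality package: for a non-degenerate Liouville manifold the map $\mathcal{CO}\colon SH^*(Y^0)\to HH^*(\scW)$ is an isomorphism, and $\scW$ acquires a smooth Calabi-Yau structure of dimension $n=3$ relative to its center $SH^0(Y^0)$.

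I would then unwind what this smooth Calabi-Yau structure says at the level of the relative Serre functor of \cite{Bezrukavnikov-Kaledin_2004}. The cyclic refinement of $\mathcal{CO}$ supplies, for every pair of objects, a perfect $SH^0(Y^0)$-linear pairing
\begin{align*}
 \Hom_{D^b\scW}(A,B)\otimes \Hom_{D^b\scW}(B,A[3]) \to SH^0(Y^0),
\end{align*}
functorial in $A$ and $B$; this is precisely the statement that the Serre functor of $D^b\scW$ over $SH^0(Y^0)$ is the shift $\bullet[3]$, i.e.\ that $D^b\scW$ is Calabi-Yau of dimension three over $SH^0(Y^0)$. Here the fact that the Serre functor is the \emph{pure} shift, rather than the shift twisted by some central automorphism, is forced by the grading and orientation data coming from the trivialization $\Omega$ of the canonical bundle.

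As an independent check — and as an alternative that sidesteps the analytic input — one can transport the question across \pref{th:hms}. On the $B$-side the center of $D^b\coh X^0$ is $HH^0(D^b\coh X^0)\cong\Gamma(\scO_{X^0})$, and $X^0=X\setminus D$ is a smooth quasi-projective threefold with $\omega_{X^0}\cong\scO_{X^0}$. Since its affinization $X^0\to\Spec\Gamma(\scO_{X^0})$ is proper — it contracts only the exceptional curve $E$ and is an isomorphism elsewhere — $D^b\coh X^0$ is a smooth Calabi-Yau category of dimension three whose Serre functor over $\Gamma(\scO_{X^0})$ is $-\otimes\omega_{X^0}[3]=\bullet[3]$. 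Under \pref{th:hms} this matches the $A$-side statement, provided the central action of $SH^0(Y^0)$ defined by $\mathcal{CO}$ corresponds to multiplication by $\Gamma(\scO_{X^0})$.

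The main obstacle — and the reason the statement is posed as a conjecture — is exactly this last compatibility together with the non-degeneracy input. Concretely, one must verify that $\mathcal{CO}$ carries the unit to the unit and identifies $SH^0(Y^0)$ with $\Gamma(\scO_{X^0})$ as the center acting through $\mathcal{CO}$, and that the trace data defining the Calabi-Yau pairing is the one induced by the cyclic open-closed map rather than an a priori different normalization. Both reduce to establishing the non-degeneracy of $\scW$ and the geometric construction of the (cyclic) open-closed map for this particular complete intersection $Y^0$; while non-degeneracy is expected from Abouzaid's criterion and the resulting duality from Ganatra's work, checking the generation criterion in this explicit setting, and matching the induced pairing with the holomorphic-volume-form normalization, is the part we have not carried out.
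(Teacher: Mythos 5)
You should first note that the paper does not prove this statement: it is introduced precisely as ``a conjecture that we were not able to verify,'' and is only \emph{used} (as a hypothesis) in the alternative proof of \pref{th:fuk0}. So there is no proof in the paper to compare yours against, and the real question is whether your proposal closes the gap the authors left open. It does not, and you say so yourself in your last paragraph. The strategy you outline --- establish non-degeneracy of $\scW$, invoke Abouzaid's generation criterion, and then apply Ganatra's duality package to get a smooth Calabi--Yau structure whose relative Serre functor over the center $SH^0(Y^0)$ is the shift $[3]$ --- is the expected route, but the two load-bearing steps are exactly the ones not carried out: (i) verifying non-degeneracy for $Y^0$ (that the unit of $SH^*(Y^0)$ is in the image of the open-closed map from $HH_*(\scW)$), and (ii) showing that the induced pairing over $SH^0(Y^0)$ computes the relative Serre functor in the Bezrukavnikov--Kaledin sense with no twist by a central automorphism. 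Asserting that the pure shift ``is forced by the grading and orientation data'' is not an argument; this is the content one would have to prove.

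There is also an obstruction specific to this paper's setup that your proposal glosses over: the authors explicitly do not construct $A_\infty$-operations on their wrapped Floer cohomology (see the Remark in \pref{sc:fuk}), and their wrapped groups are defined as direct limits of cohomology for a fibered admissible Hamiltonian as in the appendix. Ganatra's results require a chain-level wrapped Fukaya category with chain-level open-closed and closed-open maps in a specific model (quadratic or linear Hamiltonians), so one would first have to reconcile the Lefschetz-fibered model used here with that framework before any of the duality machinery applies. Your B-side consistency check --- properness of $X^0 \to \Spec \Gamma(\scO_{X^0})$ and triviality of $\omega_{X^0}$ --- is correct and is genuine evidence for the conjecture, but transporting it back across \pref{th:hms} presupposes that $\mathcal{CO}$ identifies the $SH^0(Y^0)$-action with multiplication by $\Gamma(\scO_{X^0})$, which is again one of the unverified compatibilities. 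In short: your proposal is a sensible research plan, consistent with why the authors posed this as a conjecture, but it is not a proof.
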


Let $\scD$ be a triangulated category and
$\scN \subset \scD$ be a full triangulated subcategory.
We will always assume that triangulated categories
have enhancements
in terms of dg categories
\cite{Bondal-Kapranov_ETC}
or $A_\infty$-categories
(cf.~e.g.~\cite{Keller_IAAM}).
The {\em right orthogonal} to $\scN$ is the full subcategory
$\scN^\perp \subset \scD$
consisting of objects $M$ satisfying
$\Hom(N, M) = 0$ for any $N \in \scN$.
The {\em left orthogonal} $\! \,^{\perp} \scN$ is defined similarly.
The subcategory $\scN$ is said to be {\em right admissible}
if the embedding $I : \scN \hookrightarrow \scD$ has
a right adjoint functor $Q : \scD \to \scN$.
Left admissibility is defined similarly
as the existence of a left adjoint functor,
and $\scN$ is said to be {\em admissible}
if it is both right and left admissible.

A subcategory $\scN$ is right admissible
if and only if for any $X \in \scD$,
there exists a distinguished triangle
$
 N \to X \to M \to N[1]
$
with $N \in \scN$ and $M \in \scN^\perp$.
Such a triangle is unique up to isomorphism,
and one has $Q(X) = N$ in this case.
If $\scN$ is right admissible,
then the quotient category $\scD / \scN$ is equivalent to $\scN^\perp$.
Analogous statements also hold for left admissible subcategories.
A sequence $(\scN_1, \ldots, \scN_n)$
of admissible subcategories
in a triangulated category $\scD$ is called
a {\em semiorthogonal decomposition}
$\scN_j \subset \scN_i^\bot$ for any $1 \le j < i \le n$,
and $\scN_1, \ldots, \scN_n$ generates $\scD$
as a triangulated category.
A semiorthogonal decomposition will be denoted by
$$
 \scD = \la \scN_1, \ldots, \scN_n \ra.
$$

An object $E$ of $\scD$ is {\em almost exceptional}
if $\Ext^i(E, E) = 0$ for $i \ne 0$ and
the algebra $A := \Hom(E, E)$ has finite homological dimension
\cite[Definition 2.1]{Bezrukavnikov-Kaledin_2004}.
Let $\scE$ be the smallest full subcategory of $\scD$
containing $E$ and closed under cones and direct summands.
Then one has a semiorthogonal decomposition
$$
 \scD = \la \scE, \ \scE^\bot \ra
$$
as in \cite[Theorem 3.2]{Bondal_RAACS};
the object $N \in \scE$ in the decomposition
$
 N \to X \to M \to N[1]
$
of an object $X \in \scD$ is given by
$$
 N = \hom^\bullet(E, X) \Lotimes_A E,
$$
and $M \in \scE^\bot$ is the mapping cone
$$
 M = \Cone \lb \hom^\bullet(E, X) \Lotimes_A E \xto{\ev} X \rb
$$
of the evaluation morphism.

\begin{proof}[An alternative proof of \pref{th:fuk0}
assuming \pref{cj:PD}]
\pref{cr:ring_isom} and \pref{cr:conifold_tilting}
show that $L_0 \oplus L_1$ is an almost exceptional object
in $D^b \scWtilde$,
so that one has a semiorthogonal decomposition
\begin{align} \label{eq:sod}
 D^b \scWtilde = \la D^b \scW^\bot, \  D^b \scW \ra.
\end{align}
\pref{cj:PD} implies that \eqref{eq:sod}
is an orthogonal decomposition;
\begin{align} \label{eq:od}
 D^b \scWtilde = D^b \scW^\bot \oplus  D^b \scW.
\end{align}
Since $\End(S_0) \cong H^0(S^3) \cong \bC$,
the objects $S_0$ is indecomposable and
belongs to either $D^b \scW$ or $D^b \scW^\bot$.
The latter is impossible since
%$L_0 \cap S_0$ consists of one point, so that
$\Hom(L_0, S_0) = \bC$.
This implies that $S_0 \in D^b \scW$,
and similarly for $S_1$.
The fact
$$
 \Hom^i(L_j, S_k) =
\begin{cases}
 \bC & i = 0 \text{ and } j = k, \\
 0 & \text{otherwise}
\end{cases}
$$
shows that $S_i$ goes to $\scO_E$ and $\scO_E(-1)$
under the derived equivalence
$$
 \scW \cong D^b \coh X^0,
$$
and \pref{th:fuk0} is proved.
\end{proof}

%One can also see the non-triviality of $A_\infty$-operations
%using \cite{MR2786590}.

\section{Immersed Lagrangian $S^2 \times S^1$}
 \label{sc:immersed}

In the construction of the SYZ mirror in \pref{sc:SYZ_construction},
we first considered the fibers away from the discriminant locus
to obtain $\Yv_0$,
and then added fibers above the discriminant locus `by hand'
to obtain a partial compactification $X^0$.
It is reasonable to speculate that
points on $X^0 \setminus \Yv_0$ can be identified
with singular fibers $L_u := \rho^{-1}(u)$ of the original SYZ fibration
$\rho : Y^0 \to \bR^3$,
where $u \in \Gamma$ is a point on the discriminant.
%equipped with flat $U(1)$-bundles.
%
In this section,
we give Floer cohomology computations
in favor of this speculation.

Set $a = -1$ and $b = -1/2$ for simplicity,
and consider a point $(1, 0, \lambda) \in \Gamma$
on the discriminant \eqref{eq:SYZ_disc}
for the SYZ fibration \eqref{eq:SYZ_fib}.
The fiber $\Ll := \rho^{-1}(1,0,\lambda)$ is given by
\begin{align*}
 \Ll =
  \lc (z, u_1, v_1, u_2, v_2) \in Y^0 \relmid
   |z| = 1, \
   |u_1| = |v_1|, \
   |u_2|^2 = |v_2|^2 + 2 \lambda
  \rc.
\end{align*}
The Lagrangian $\Ll$ is an $S^1 \times S^1$-fibration
over the unit circle on the $z$-plane shown in \pref{fg:immersed_lag1}
such that the first $S^1$-component degenerates to a point above $z = -1$.
It follows that $\Ll$ is an immersed $S^2 \times S^1$,
where $S^2$ is immersed in the $(z, u_1, v_1)$-direction
in such a way that both the north pole and the south pole
are mapped to $(z, u_1, v_1) = (-1, 0, 0)$,
and $S^1$ is embedded in the $(u_2, v_2)$-direction.
We equip $\Ll$ with the trivial spin structure
and the grading such that
the unique intersection point of $\Ll$ and $L_0$ has Maslov index zero.
We consider the pair
$(\Ll, \nabla_\alpha)$
of $\Ll$ and
a flat $U(1)$ connection $\nabla_\alpha$
on the trivial complex line bundle
$\Ll \times \bC \to \Ll$,
where $\alpha \in U(1)$ is the holonomy
along the generator of $\pi_1(S^2 \times S^1) \cong \bZ$.

\begin{figure}[ht]
\centering
\input{immersed_lag1.pst}
\caption{The immersed Lagrangian $\Ll$}
\label{fg:immersed_lag1}
\end{figure}

Immersed Lagrangian Floer theory
\cite{MR2155230,Akaho-Joyce}
gives a structure of an $A_\infty$-algebra
on $H^*((\Ll, \nabla_\alpha); \bC)$.
The following lemma is a corollary of a result of Abouzaid,
which can be found in \cite[Lemma 11.6]{Seidel_CDST}:

\begin{lemma} \label{lm:Abouzaid}
The $A_\infty$-algebra on
$H^*((\Ll, \nabla_\alpha); \bC)$
is quasi-isomorphic
to the exterior algebra $\Lambda^*(\bC^3)$
equipped with the trivial differential.
%as an $A_{\infty}$-algebra.
\end{lemma}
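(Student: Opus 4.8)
The plan is to compute the immersed Floer complex of $(\Ll, \nabla_\alpha)$ by hand, recognise it as $\Lambda^*(\bC^3)$ as a graded vector space, and then invoke the local computation of Abouzaid \cite[Lemma 11.6]{Seidel_CDST} to determine the $A_\infty$-structure. Geometrically this is exactly what mirror symmetry predicts: the fibre $\Ll$ over a discriminant point, equipped with $\nabla_\alpha$, should correspond to a skyscraper sheaf $\mathcal{O}_p$ on $X^0$, and $\Ext^*(\mathcal{O}_p,\mathcal{O}_p)$ on a smooth threefold is the formal exterior algebra $\Lambda^*(\bC^3)$.

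First I would enumerate the generators of the complex. The embedded part of $\Ll \cong S^2 \times S^1$ contributes its singular cohomology $H^*(S^2 \times S^1;\bC)$, one generator in each degree $0,1,2,3$. The clean self-intersection occurs along the circle $B \cong S^1$ obtained as the product of the embedded $(u_2,v_2)$-circle with the single double point $(z,u_1,v_1)=(-1,0,0)$ of the immersed sphere; it contributes two copies of $H^*(B;\bC)=H^*(S^1;\bC)$, one for each ordering of the two sheets. With the chosen normalisation---the trivial spin structure together with the grading for which $\Ll \cap L_0$ has Maslov index zero---the resulting degree shifts place these four classes as two generators in degree $1$ and two in degree $2$. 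The graded dimension of the complex is therefore $(1,3,3,1)$, matching $\Lambda^*(\bC^3)$.

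To fix the $A_\infty$-operations I would split the relevant moduli of holomorphic polygons using the double conic fibration $f:Y^0\to\bCx$, exactly as in \pref{lm:Pascaleff}. Over the circle $\{|z|=1\}$ the first conic $u_1v_1=z-a$ degenerates at $z=a=-1$ and sweeps out the immersed sphere, while the second conic $u_2v_2=z-b$ stays smooth and carries the embedded circle; thus $\Ll$ is a fibred product of an immersed Lagrangian sphere with an embedded $S^1$. Abouzaid's computation \cite[Lemma 11.6]{Seidel_CDST} identifies the self-Floer $A_\infty$-algebra of the immersed sphere with $\Lambda^*(\bC^2)$ carrying the zero differential, while the embedded circle contributes $H^*(S^1)=\Lambda^*(\bC^1)$; here the dimensions $(1,2,1)$ of the sphere factor match $\Lambda^*(\bC^2)$ exactly, so no differential survives. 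Since every rigid holomorphic polygon is governed by its projection to the $z$-plane and factors through the two conics, the $A_\infty$-structure on $\Ll$ is the tensor product, yielding $\Lambda^*(\bC^2)\otimes\Lambda^*(\bC^1)\cong\Lambda^*(\bC^3)$ with trivial differential.

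The hard part will be to license this factorisation and to control the holonomy $\alpha$. One must verify that the hypotheses of \cite[Lemma 11.6]{Seidel_CDST} hold in our setting---that the local model along $B$ is the standard clean self-intersection and that, for a generic compatible almost complex structure, the moduli of discs through the node split off a constant factor in the second conic---so that the product decomposition of moduli is genuine rather than heuristic. One then has to check that $\nabla_\alpha$ enters only through the embedded $S^1$-factor and contributes no disc to $\frakm_1$, so that the differential vanishes and all higher products collapse to those of the exterior algebra for every $\alpha\in U(1)$. Establishing this splitting in the fibred---rather than honestly product---geometry of $Y^0$, together with the transversality required to rule out spurious teardrop contributions passing through the double point, is the crux of the argument.
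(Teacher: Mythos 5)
You have identified the right key input (Abouzaid's local computation for the immersed sphere, \cite[Lemma 11.6]{Seidel_CDST}, tensored with $H^*(S^1)$) and your generator count $(1,3,3,1)$ is a correct sanity check, but your route is not the one the paper takes and, as you yourself concede, it is incomplete: the ``crux'' you defer --- splitting the moduli spaces of holomorphic polygons along the double conic fibration in a geometry that is a fibred product rather than an honest product, ruling out teardrops through the double point, and showing that $\nabla_\alpha$ contributes nothing to $\frakm_1$ or to the higher products --- is precisely the content that would have to be supplied, and none of it is established. A Pascaleff-style projection argument in the spirit of \pref{lm:Pascaleff} is also awkward here, since the polygons relevant to the \emph{self}-Floer algebra of $\Ll$ project to the single circle $|z|=1$ rather than to transverse paths, so the combinatorial control over triangles that drives \pref{sc:fuk} is not available.

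The missing idea is exactness. The paper first observes that $\Ll$ is exact in the sense that $\omega$ vanishes on $\pi_2(Y^0,\Ll)$, because $\Ll$ is homotopic to the union $S_0\cup S_1$ of \pref{th:fuk0}, where this vanishing was already deduced from the long exact sequence of homotopy groups and the injectivity of $\pi_1(S_0\cup S_1)\to\pi_1(Y^0)$. Exactness does two jobs at once: it shows that the $A_\infty$-structure is independent of the holonomy $\alpha$ (so the question you raise about $\nabla_\alpha$ never arises), and it licenses the purely topological chain model of Abouzaid \cite{MR2786590}, so that no holomorphic polygon has to be counted at all. In that chain model the factorisation you are trying to extract from moduli spaces is manifest at the cochain level: the model for $\Ll$ is literally the tensor product of the model for the immersed $S^2$ (quasi-isomorphic to $\Lambda^*(\bC^2)$ by the same \cite[Lemma 11.6]{Seidel_CDST} you cite) with $C^*(S^1)\simeq\Lambda(\bC)$, giving $\Lambda^*(\bC^3)$ with trivial differential. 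Without the exactness step your argument does not close; with it, almost everything you flag as ``the hard part'' becomes unnecessary.
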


\begin{proof}
The immersed Lagrangian $\Ll$ is exact in the sense that
the symplectic form $\omega$ vanishes on $\pi_2(Y^0, \Ll)$,
since it is homotopic to $S_0 \cup S_1$ appearing in \pref{th:fuk0}.
It follows that
%there are no holomorphic disks in $Y^0$ bounded by $\Ll$, so that
the $A_\infty$-structure on $H^*((\Ll, \nabla_\alpha); \bC)$
does not depend on $\nabla_\alpha$
and can be computed by the chain model of Abouzaid \cite{MR2786590}.

The Abouzaid model for $\Ll$ is the tensor product
of the Abouzaid model for an immersed Lagrangian $S^2$
and the cochain complex $C^*(S^1)$ for a circle.
Since the Abouzaid model for the immersed $S^2$
is quasi-isomorphic to $\Lambda^*(\bC^2)$ by \cite[Lemma 11.6]{Seidel_CDST}
and the cochain complex $C^*(S^1)$ is quasi-isomorphic to $\Lambda(\bC)$,
their tensor product is quasi-isomorphic to
$
 \Lambda(\bC^2) \otimes \Lambda(\bC) \cong \Lambda^*(\bC^3).
$
%and \pref{lm:Abouzaid} is proved.
\end{proof}

%Although the symplectic form $\omega$ vanishes on $\pi_2(Y^0, \Ll)$,
Since the immersed Lagrangian $\Ll$ is not exact
in the sense that the pull-back of the Liouville 1-form
(i.e., the 1-form $\theta$ on $Y^0$ such that $\omega = d \theta$)
represents a non-trivial cohomology class on $H^1(S^2 \times S^1)$,
one has to work over the Novikov field
$$
 \Lambda_\bC = \lc \sum_{i=0}^\infty a_i T^{\lambda_i} \relmid a_i \in \bC, \
 \lim_{i \to \infty} \lambda_i = \infty \rc
$$
to define the Floer cohomology $\Hom(L_i,(\Ll, \nabla_\alpha))$
for $i=0,1$.
We replace the mirror manifold $X^0$
with the variety $\Xl := X^0 \otimes_{\bC} \Lambda$
over $\Lambda$ accordingly.
Let $\pl$ be the point on $\Xl$
given by $(u,v,w_1,w_2,[x:y]) = (0,0,-1,\alpha T^{\lambda},[0:1])$.

\begin{lemma} \label{lm:point}
The Floer cohomology
$
 \Hom(L_0, (\Ll, \nabla_\alpha))
  \oplus
 \Hom(L_1, (\Ll, \nabla_\alpha))
$
as a module over $\oplus_{i,j=0}^1\Hom(L_i,L_j)$
is isomorphic
to the module
$
 \Hom(\scO_\Xl \oplus \scO_\Xl(1), \scO_\pl)
$
over $\End(\scO_\Xl \oplus \scO_\Xl(1))$.
\end{lemma}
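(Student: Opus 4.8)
The plan is to compute the graded $\bigoplus_{i,j=0}^1\Hom(L_i,L_j)$-module
$$
 M := \Hom(L_0,(\Ll,\nabla_\alpha)) \oplus \Hom(L_1,(\Ll,\nabla_\alpha))
$$
directly from the Floer theory of the pairs $(L_i,\Ll)$ and to recognize the answer as the skyscraper module at $\pl$. The guiding observation is that, once $\bigoplus_{i,j}\Hom(L_i,L_j)$ has been identified with the conifold endomorphism algebra $A=\End(\scO_\Xl\oplus\scO_\Xl(1))$ via \pref{cr:ring_isom}, any $A$-module that is one-dimensional over the two vertices is the same datum as a one-dimensional representation of the conifold quiver satisfying the relations \eqref{eq:relations}; such representations are exactly the closed points of $\Xl$, and $\Hom(\scO_\Xl\oplus\scO_\Xl(1),\scO_{\pl})$ is precisely the representation whose arrows $x,y,t_1,t_2$ act by the coordinates of $\pl$. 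It therefore suffices to show that $M$ is one-dimensional over each vertex and to compute the four structure constants, which I expect to come out as the coordinates $(x,y,t_1,t_2)=(0,1,\alpha T^\lambda,0)$ of $\pl=(0,0,-1,\alpha T^\lambda-1,[0:1])$.

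First I would determine the groups themselves. Viewing $\Ll$ as an $S^1\times S^1$-fibration over the unit circle $C_1=\{|z|=1\}$ and each $L_i$ as fibered over $\gamma_i$, every intersection point of $L_i$ with $\Ll$ lies over a point of $\gamma_i\cap C_1$, and over such a base point the section $L_i$ meets the fiber transversally in a single point, using that $\phi|_{L_i}$ is injective as in \pref{pr:section_SYZ}. The path $\gamma_0$ meets $C_1$ only at $z=1$, giving a single generator of $\Hom(L_0,\Ll)$ which by our normalization sits in degree $0$; and after a Hamiltonian isotopy $\gamma_1$ contributes a single surviving generator of $\Hom(L_1,\Ll)$, again in degree $0$. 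Hence $\Hom^\bullet(L_i,(\Ll,\nabla_\alpha))\cong\Lambda$ concentrated in degree $0$ for $i=0,1$, matching $\Hom^\bullet(\scO_\Xl(i),\scO_{\pl})=\Lambda$ in degree $0$.

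Next I would compute the module structure, i.e. the four triangle products of the generators $x,y\in\Hom(L_0,L_1)$ and $t_1,t_2\in\Hom(L_1,L_0)$ on $M$, each of the form $\frakm_2(-,\,x)$, $\frakm_2(-,\,y)$, $\frakm_2(-,\,t_1)$, $\frakm_2(-,\,t_2)$. Since $\Ll$, $L_0$ and $L_1$ are all fibered over curves in the $z$-plane, these products are governed by holomorphic triangles in $\bCx$ together with the fiber analysis of \pref{lm:Pascaleff}, exactly as in the proof of \pref{pr:ring_isom}. For each generator I would determine whether a holomorphic triangle with the correct vertices exists combinatorially in the base, measure its symplectic area to obtain the power of the Novikov parameter $T$, and record the holonomy of $\nabla_\alpha$ along the boundary arc swept on $\Ll$ to obtain the power of $\alpha$. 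The degeneration of the $(u_1,v_1)$-circle of $\Ll$ over $z=a=-1$ is what forces the $x$- and $t_2$-actions to vanish, while the triangle responsible for the $t_1$-action wraps the degenerating circle and picks up area $\lambda$ and holonomy $\alpha$, producing the weight $\alpha T^\lambda$; the $y$-action is an isomorphism of weight $1$. This yields structure constants $(0,1,\alpha T^\lambda,0)$.

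Combining the two steps, $M$ is the one-dimensional-at-each-vertex $A$-module whose arrows act by $(0,1,\alpha T^\lambda,0)$, which under the quiver-representation description of closed points of $\Xl$ is exactly $\Hom(\scO_\Xl\oplus\scO_\Xl(1),\scO_{\pl})$, proving the lemma. The main obstacle is the third step: carrying out this weighted triangle count through the immersed Floer theory of $\Ll$ — with its branch jumps at the immersed point, the signs, and the chosen spin structure — and in particular pinning down that the single triangle contributing to the $t_1$-action has area exactly $\lambda$ and holonomy exactly $\alpha$, so that its weight is precisely $\alpha T^\lambda$ rather than merely nonzero, and confirming the genuine vanishing (not just cancellation up to gauge) of the $x$- and $t_2$-actions.
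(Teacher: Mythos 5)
Your overall strategy is a legitimate variant of the paper's: where the paper computes the action of the entire basis $\{p_{a,i_1,i_2}, q_{a,i_1,i_2}, r_{a,i_1,i_2}\}$ of the wrapped algebra on $\Lambda\ql\oplus\Lambda\rl$ and matches it term by term with evaluation of $P_{a,i_1,i_2}, Q_{a,i_1,i_2}, R_{a,i_1,i_2}$ at $\pl$, you reduce to the four quiver generators and invoke the description of points of the resolved conifold as stable $(1,1)$-dimensional quiver representations. That reduction is sound in principle (modulo two small points you should record: stability, i.e.\ $(x,y)\neq(0,0)$, which holds since $y$ acts invertibly; and the fact that $\End(\scO_\Xl\oplus\scO_\Xl(1))$ is a localization of the path algebra at $w_1=xt_2-1$ and $w_2=yt_1-1$, so you must also check these act invertibly). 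Your identification of the ranks and of the vanishing of the $x$- and $t_2$-actions agrees with the paper.

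The genuine gap is in the structure constant you expect for $t_1$, which is exactly the computation you single out as the crux. The paper defines $\pl$ by $(u,v,w_1,w_2,[x_1:x_2])=(0,0,-1,\alpha T^{\lambda},[0:1])$, and its explicit computation $\frakm_2(\ql,p_{0,i,j})=(-1)^i(\alpha T^\lambda)^j\ql$ says that $w_2=P_{0,0,1}$ acts by $\alpha T^\lambda$. Since $w_2=yt_1-1$ and $y=q_{1/2,0,0}$ acts by $1$ (from $\frakm_2(\rl,q_{1/2,0,0})=\ql$), associativity forces $t_1=r_{-1/2,0,0}$ to act by $1+\alpha T^\lambda$, not by $\alpha T^\lambda$. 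Your proposed constants $(x,y,t_1,t_2)=(0,1,\alpha T^\lambda,0)$ instead produce the skyscraper at $(0,0,-1,\alpha T^\lambda-1,[0:1])$ --- and indeed you have silently rewritten $\pl$ with $w_2$-coordinate $\alpha T^\lambda-1$, which is not the point in the statement of \pref{lm:point}. Geometrically, the ``single triangle of area $\lambda$ and holonomy $\alpha$'' that you describe computes the action of $w_2$ (equivalently of the length-one chord $p_{0,0,1}$ over the base point $z=1$), not of the half-integer chord $t_1$; the $t_1$-action receives two contributions, a short triangle giving $1$ and a wrapping one giving $\alpha T^\lambda$. You should either correct the expected constant to $1+\alpha T^\lambda$ and account for both triangles, or follow the paper and compute the actions of $w_1^iw_2^j$ and $yw_1^iw_2^j$ directly, for which the single-triangle-per-$(i,j)$ picture (with sign $(-1)^i$ from the spin structure and weight $(\alpha T^\lambda)^j$ from holonomy and area) is the correct one.
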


\begin{proof}
The intersection $L_0 \cap \Ll$ consists of a single point
$
 \ql = (1, \sqrt{2}, \sqrt{2}, u_2, v_2) \in Y^0,
$
where $(u_2, v_2) \in (\bR^{>0})^2$ is defined by
$
 u_2 v_2 = 3/2
$
and
$
 u_2^2 - v_2^2 = 2 \lambda.
$
The $A_\infty$-operation
$
 \frakm_2(\ql, p_{a,i,j})
$
in immersed Lagrangian Floer theory
%\cite{MR2155230,Akaho-Joyce}
is given by the virtual count
$$
 \frakm_2(\ql, p_{a,i,j}) = \sum_{\phi_n(q) \in \Ll \cap \phi_n(L_0)}
  \sum_{\varphi \in [\scMbar(q)]^{\mathrm{virt}}} \sgn(\varphi)
   \hol(\nabla_\alpha, \varphi(\partial D^2))
   T^{\int_D^2 \varphi^* \omega}
   \cdot q
$$
over the moduli space $\scMbar(q)$ of holomorphic maps
$\varphi : (D^2, (z_0, z_1, z_2)) \to Y_0$
from a disk with three marked points on the boundary
satisfying
\begin{itemize}
 \item
$\varphi(z_0) = \phi_n(q)$,
$\varphi(z_1) = p_{a,i_1,i_2}$, and
$\varphi(z_2) = \ql$,
 \item
$\varphi(\partial_0 D^2) \subset \phi_n(L_0)$,
$\varphi(\partial_1 D^2) \subset L_0$, and
$\varphi(\partial_2 D^2) \subset \Ll$.
\end{itemize}
Here $\partial_i D^2 \subset \partial D^2$ is the interval
between $z_i$ and $z_{i+1}$.
The sign $\sgn(\varphi)=\pm 1$ comes from the orientation on the moduli space
(cf. \cite[Section 11]{Seidel_PL} and \cite[Chapter 8]{Fukaya-Oh-Ohta-Ono}).

\begin{figure}[ht]
\begin{minipage}{.5 \linewidth}
\centering
\vspace{1mm}
\input{immersed_intersection.pst}
\vspace{3mm}
\caption{Intersections on the base}
\label{fg:immersed_intersection}
\end{minipage}
\begin{minipage}{.5 \linewidth}
\centering
\input{fiber_intersection.pst}
\caption{Intersections on the fiber}
\label{fg:fiber_intersection}
\end{minipage}
\end{figure}

\begin{figure}[ht]
\begin{minipage}{.5 \linewidth}
\centering
\vspace{4mm}
\input{base_triangle1.pst}
\caption{A triangle on the base}
\label{fg:base_triangle1}
\end{minipage}
\begin{minipage}{.5 \linewidth}
\centering
\input{base_triangle2.pst}
\caption{Another triangle on the base}
\label{fg:base_triangle2}
\end{minipage}
\end{figure}

In immersed Lagrangian Floer theory,
one counts only maps $\varphi$ such that
$\varphi|_{\partial_2 D^2} : \partial_2 D^2 \to \Ll$
lifts to a map $\partial_2 D^2 \to S^2 \times S^1$.
One can see from \pref{fg:immersed_intersection}
that this is possible only for $a=0$, so that
\begin{align}
 \frakm_2(\ql, p_{a,i,j}) &= 0
  \label{eq:immersed_A-infinity1}
\end{align}
for $a \ne 0$ and any $i, j \in \bZ$.
One can also see from Figures \ref{fg:immersed_intersection}
and \ref{fg:fiber_intersection}
that for each $(i, j) \in \bZ^2$,
there is a unique holomorphic triangle
bounded by $\phi_n(L_0)$,
$L_0$ and $\Ll$
two of whose vertices are $p_{0,i,j}$
and $q_\lambda$.
The projection of this unique triangle to the $z$-plane is shown
in \pref{fg:base_triangle1}.
The third vertex of this triangle is $\phi_n(q_\lambda)$,
which is the unique intersection point
of $\phi_n(L_0)$ and $\Ll$.
Since $L_0$ and hence $\phi_n(L_0)$ carry
the trivial spin structure and the trivial flat $U(1)$-bundle,
the factor
$
 \sgn(\varphi)
   \hol(\nabla_\alpha, \varphi(\partial D^2))
$
comes entirely from the holonomy and the spin structure
along $\partial_2 D^2$.
Since $\varphi(\partial_2 D^2)$ wraps $j$ times
along the $S^1$-factor of $S^2 \times S^1$,
one has
$
 \hol(\nabla_\alpha, \varphi(\partial D^2)) = \alpha^j.
$
The trivial spin structure on $S^2 \times S^1$ induces
the trivial spin structure on the $S^1$-factor,
and the non-trivial spin structure on the equator
of the $S^2$-factor
(cf. e.g. \cite[Page 201]{MR0157388}).
This non-trivial spin structure contributes
to the sign as
$
 \sgn(\varphi) = (-1)^i,
$
cf. \cite[Section 9e]{Seidel_K3}.
This sign can also be determined
by the ring isomorphism
$\Hom_{\scW_1}(L_0, L_0) \cong H^0 \lb \scO_{X^0} \rb$
and the associativity.
The area of $\varphi(D^2)$ is $j \lambda$
up to an additive overall constant which can be absorbed
in the definition of the generator of the Floer cohomology,
and one obtains
\begin{align*}
 \frakm_2(\ql, p_{0,i,j}) &= (-1)^i (\alpha_2 T^\lambda)^j \ql
\end{align*}
for any $i, j \in \bZ$.

The intersection $L_1 \cap \Ll$ also consists of a single point $\rl$,
and one can show
$
 \frakm_2(\rl, p_{0,i,j}') = (-1)^i (\alpha_2 T^\lambda)^j \rl
$
for any $i, j \in \bZ$
by exactly the same argument as above,
where $p_{a,i,j}' \in \phi_n(L_1) \cap L_1$ are defined
similarly as $p_{a,i,j} \in \phi_n(L_0) \cap L_0$.
One can also see from \pref{fg:base_triangle2}
and the same argument as above
that the composition
$
 \Hom(L_1, \Ll) \otimes \Hom(L_0, L_1) \to \Hom(L_0, \Ll)
$
is given by
$$
 \frakm_2(\rl, q_{a,i_1,i_2}) =
\begin{cases}
 (-1)^i (\alpha_2 T^\lambda)^j \ql, & a = 1/2, \\
 0 & \text{otherwise}.
\end{cases}
$$
The composition
$
 \Hom(L_0, \Ll) \otimes \Hom(L_1, L_0) \to \Hom(L_1, \Ll)
$
can be computed similarly,
and \pref{lm:point} is proved.
\end{proof}

\section{Small toric Calabi-Yau 3-folds}
 \label{sc:small}

Let $Y^0$ be the complete intersection in
$
 \bCx \times \bC^4
  = \Spec \bC[z, z^{-1}, u_1, u_2, v_1, v_2]
$
defined by
\begin{align} \label{eq:Y}
\left\{
\begin{aligned}
  u_1 v_1 &= (z-a_1)\cdots(z-a_k), \\
 u_2 v_2 &= (z-b_1)\cdots(z-b_l). \\
\end{aligned} \right.
% u_1 v_1 &= (z-a_1)\cdots(z-a_k),
%  \label{eq:Y1} \\
% u_2 v_2 &= (z-b_1)\cdots(z-b_l).
%  \label{eq:Y2}
\end{align}
The SYZ mirror for $Y^0$ is the complement
$$
 X^0 = X \setminus D
$$
of a divisor $D$ in a crepant resolution $X$ of the toric variety
whose fan polytope is shown in  \pref{fg:general_diagram}.

\begin{figure}[ht]
\centering
\input{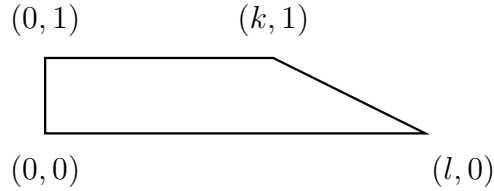}
\caption{The fan polytope}
\label{fg:general_diagram}
\end{figure}

Here, the fan polytope of a toric variety
is the convex hull of the primitive generators
of one-dimensional cones of the fan.
The fan structure induces a polyhedral decomposition
of the fan polytope,
and the fan polytope equipped with this polyhedral decomposition
is called a {\em toric diagram}.

The construction of the SYZ mirror of a complete intersection
in \cite[Section 11]{Abouzaid-Auroux-Katzarkov_LFB}
shows that primitive generators
of one-dimensional cones of the fan for $X$
are given by
$(0,1,0), (1, 1, 0), \ldots, (k,1,0)$, and
$(0,0,1), (1, 0, 1), \ldots, (l, 0, 1)$.
%The first $k+1$ points comes from monomials in \eqref{eq:Y1},
%and the next $l+1$ points comes from monomials in \eqref{eq:Y2}.

One can map these points
by the automorphism of $N \cong \bZ^3$
sending $(n_1, n_2, n_3)$ to $(n_1, n_2, n_2 + n_3)$,
so that the fan polytope is the quadrangle
on the hyperplane $\{ (n_1, n_2, n_3) \in N_\bR \mid n_3 = 1 \}$
shown in \pref{fg:general_diagram}.
The toric Calabi-Yau 3-fold $X$ is {\em small}
in the sense that the resolution $X \to Z = \Spec \bC[X]$
does not have 2-dimensional fibers
(in other words, the toric variety $X$ has no compact toric divisors).

It is sometimes convenient
to consider a stacky resolution $\scX$ of $Z$,
whose toric diagram is obtained by the triangulation
of the fan polytope.
%in \pref{fg:general_diagram}.
Let consider the case
when the fan for $\scX$ has two 3-dimensional cones,
one of which is generated by
\begin{align*}
 v_1 = (0, 0, 1), \
 v_3 = (0, 1, 0), \text{ and }
 v_4 = (l, 0, 1),
\end{align*}
and the other is generated by
\begin{align*}
 v_2 = (k, 1, 0), \ v_3, \text{ and } v_4.
\end{align*}
Let $\varphi : \bZ^4 \to N \cong \bZ^3$
be the homomorphism
sending the $i$-th standard basis $e_i \in \bZ^4$
to $v_i \in N$ for $i = 1, \ldots, 4$.
Then the toric stack $\scX$ is the quotient
$$
 \scX := [(\bC^4 \setminus \Sigma)/K],
$$
where
$
 \Sigma := \lc (x_1, x_2, x_3, x_4) \in \bC^4 \relmid x_1 = x_2 = 0 \rc
$
is the Stanley-Reisner locus and
\begin{align*}
 K
 &= \Ker \lb \varphi \otimes \bCx : (\bCx)^4 \to N_{\bCx} \cong (\bCx)^3 \rb \\
 &= \lc (\alpha_1, \alpha_2, \alpha_3, \alpha_4) \in (\bCx)^4
  \relmid \alpha_2^k \alpha_4^l = \alpha_2 \alpha_3 = \alpha_1 \alpha_4 = 1 \rc \\
 &\cong K_1 \times K_2.
\end{align*}
Here $K_1$ and $K_2$ are the subgroups of $K$ given by
\begin{align}
 K_1 &:= \lc ( \alpha^k, \alpha^l, \alpha^{-l}, \alpha^{-k}) \in (\bCx)^4
   \relmid \alpha \in \bCx \rc
  \cong \bCx, \\
 K_2 &:= \lc ( \alpha, 1, 1, \alpha^{-1}) \in (\bCx)^4
   \relmid \alpha \in \bCx, \ \alpha^g = 1 \rc
  \cong \bZ / g \bZ
\end{align}
where $g = \gcd(k, l)$.
This shows that the toric stack $\scX$
is the total space of the direct sum of two line bundles
on the quotient stack $\bX := \bX_{k,l} := [\bP(k', l') / (\bZ / g \bZ)]$
of the weighted projective line
$\bP(k', l')$
for $k = g k'$ and $l = g l'$.

The toric stack $\bX$ has the following description
due to Geigle and Lenzing \cite{Geigle-Lenzing_WPC}:
Let $S = \bC[x_1, x_2]$ be the polynomial ring in two variables,
graded by the abelian group
$
 L = \bZ \cdot \vecx_1 \oplus \bZ \cdot \vecx_2 / (k \vecx_1 - l \vecx_2)
$
of rank one as $\deg (x_i) = \vecx_i$ for $i = 1, 2$.
Then $L$ can naturally be identified
with the group $\Hom(K, \bCx)$ of characters of $K$
and one has
$$
 \bX \cong [(\Spec S \setminus \bszero) / K].
$$
The Picard group of $\bX$ can be identified with $L$,
and the line bundle on $\bX$ associated with an element $\vecx \in L$
will be denoted by $\scO_\bX(\vecx)$.
The canonical bundle of $\bX$ is $\scO_\bX(\vecomega)$
for $\vecomega = - \vecx_1 - \vecx_2$,
and the stack $\scX$ is the total space of the direct sum
$$
 \scX \cong \scO_\bX(- \vecx_1) \oplus \scO_\bX(- \vecx_2)
$$
of line bundles.

Choose $a_i$ and $b_j$ in such a way
that all of them are on the unit circle and mutually distinct.
Let $(\gamma_i)_{i=0}^{k+l-1}$ be a collection
of strongly admissible paths,
such that for any connected component of $S^1 \setminus \Delta$
for $S^1 = \{ z \in \bCx \mid |z| = 1 \}$
and $\Delta = \{ a_1, \ldots, a_k, b_1, \ldots, b_l \}$,
there is a unique $i$ such that $\gamma_i$ intersects it.
Let $\scW$ be the wrapped Fukaya category of $Y$
consisting of $L_i := L_{\gamma_i}$ for $i = 0, \ldots, k+l-1$.
Define the collection $(\scL_i)_{i=0}^{k+l-1}$
of line bundles on $\scX$ inductively by $\scL_0 = \scO_\scX$ and
\begin{align*}
 \scL_i =
\begin{cases}
 \scL_i \otimes \pi^*(\scO(\vecx_1)) &
  \text{$a_j$ for some $j$ lies between $\gamma_{i-1}$ and $\gamma_i$}, \\
 \scL_i \otimes \pi^*(\scO(\vecx_2)) &
  \text{$b_j$ for some $j$ lies between $\gamma_{i-1}$ and $\gamma_i$},
\end{cases}
\end{align*}
where $\pi : \mathcal{X} \to \bX$ is the natural projection. \vskip 10 pt

Then the proof of \pref{th:hms}
can be easily adapted to prove the following:

\begin{theorem} \label{th:hms2}
There is an equivalence
\begin{align}
 D^b \scW \cong D^b \coh \mathcal{X}^0
\end{align}
of triangulated categories
sending $L_i$ to $\scL_i$ for $i = 0, \ldots, k+l-1$.
\end{theorem}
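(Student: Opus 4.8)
The plan is to follow the proof of \pref{th:hms} line by line, since that argument factored cleanly through three ingredients, each of which has a direct analogue here: a tilting object on the algebraic side (\pref{cr:conifold_tilting}), a ring isomorphism between the wrapped Floer endomorphism algebra and the sheaf endomorphism algebra (\pref{cr:ring_isom}), and the identification of $D^b \scW$ with the bounded derived category of modules over the wrapped endomorphism algebra (\pref{th:appendix2}). Write $\scX^0 := \scX \setminus D$, which is the space denoted $\mathcal{X}^0$ in the statement. The goal is to show $\bigoplus_{i=0}^{k+l-1} \scL_i$ is a tilting object on $D^b \coh \scX^0$ whose endomorphism algebra agrees with $\bigoplus_{i,j} \Hom_\scW(L_i, L_j)$, and then invoke \pref{th:tilting}.

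First I would establish the tilting object, generalizing \pref{th:conifold_tilting} and \pref{cr:conifold_tilting}. By the Geigle-Lenzing description \cite{Geigle-Lenzing_WPC}, the line bundles $\scO_\bX(\vecx^{(i)})$ on $\bX = \bX_{k,l}$ underlying the $\scL_i = \pi^* \scO_\bX(\vecx^{(i)})$ form a tilting bundle, whose endomorphism algebra is the associated canonical algebra. Since $\scX$ is the total space of the \emph{negative} bundle $\scO_\bX(-\vecx_1) \oplus \scO_\bX(-\vecx_2)$, the projection formula gives
\[
 \Ext^\bullet_{\scX}(\scL_i, \scL_j) \cong \bigoplus_{a, b \ge 0} \Ext^\bullet_{\bX}\lb \scO_\bX(\vecx^{(i)}), \scO_\bX(\vecx^{(j)} + a \vecx_1 + b \vecx_2) \rb,
\]
and each positive twist by $a \vecx_1 + b \vecx_2$ only improves positivity, so all higher $\Ext$ groups over the one-dimensional stack $\bX$ vanish; this yields acyclicity, while classical generation on $\scX$ follows as in the small-resolution case (Van den Bergh \cite{Van_den_Bergh_TFNR}). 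Passing to $\scX^0$ through the Orlov localization sequence \cite{Orlov_FCIC}, exactly as in \pref{cr:conifold_tilting}, produces the tilting object $\bigoplus_i \scL_i$ on $D^b \coh \scX^0$ together with a quiver-with-relations presentation of its endomorphism algebra read off from the brane tiling of the toric diagram of \pref{fg:general_diagram}.

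Next I would match this with the wrapped side, generalizing \pref{pr:ring_isom}. The Lagrangians $L_i$ are fibered over the interleaving strongly admissible paths $\gamma_i$, and after wrapping the intersection points are labelled precisely as before by an index recording the winding in the $z$-plane together with two fiber indices. The triangle products are computed by \pref{lm:Pascaleff}: a holomorphic triangle now meets the enlarged discriminant $\Delta = \{ a_1, \dots, a_k, b_1, \dots, b_l \}$, and if it encloses $d_1$ of the $a_j$ (affecting the $u_1 v_1$-conic) and $d_2$ of the $b_j$ (affecting the $u_2 v_2$-conic), the output is the same binomial sum $\frakm_2(\bullet, \bullet) = \sum_{s_1, s_2} \binom{d_1}{s_1} \binom{d_2}{s_2} (\bullet)$ as in the conifold computation. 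Comparing these structure constants with composition in the canonical algebra gives the ring isomorphism $\bigoplus_{i,j} \Hom_\scW(L_i, L_j) \simto \bigoplus_{i,j} \Hom(\scL_i, \scL_j)$, the analogue of \pref{cr:ring_isom}. Feeding this, together with the acyclicity from the previous step, into \pref{th:tilting} and \pref{th:appendix2} as in the deduction of \pref{th:hms} yields the desired equivalence $D^b \scW \cong D^b \coh \scX^0$ sending $L_i$ to $\scL_i$.

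The hard part will be the bookkeeping in the ring-isomorphism step: one must verify that the partition of $\Delta$ into $a$-type and $b$-type points reproduces \emph{all} the relations of the canonical algebra of $\bX_{k,l}$, not merely the single commutation relation \eqref{eq:relations} that sufficed for the conifold. Concretely, I would need to check that the walls crossed by the wrapping Hamiltonian, and hence the binomial coefficients delivered by \pref{lm:Pascaleff}, are organized so that the two families of arrows (multiplication by the two homogeneous coordinates on $\bX$ versus the two fiber coordinates) satisfy exactly the toric/dimer relations for general $(k,l)$. The degree argument forcing every wrapped group into degree zero, which is what lets \pref{th:appendix2} apply verbatim, should persist unchanged, since the geometry still fibers over the $z$-plane with the identical local model at each discriminant point.
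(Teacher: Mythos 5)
Your proposal follows exactly the route the paper intends: the paper's entire ``proof'' of \pref{th:hms2} is the single remark that the proof of \pref{th:hms} adapts, and your three-step adaptation (tilting object via the Geigle--Lenzing description and the Orlov localization, ring isomorphism via \pref{lm:Pascaleff} with the enlarged discriminant, then \pref{th:tilting} and \pref{th:appendix2}) is precisely that adaptation, spelled out in more detail than the paper itself provides. No discrepancy to report.
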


The manifold $Y^0$ comes in a family
$\scY^0 \to S$
over the configuration space
$$
  S = \left. \lc (a_1, \ldots, b_l) \in (\bCx)^{k+l} \relmid
  \text{all the points $a_1, \ldots, b_l$ are distinct} \rc
  \right/ \mathfrak{S}_k \times \mathfrak{S}_l,
$$
in such a way that $\scY^0$ is the submanifold of $\bCx \times \bC^4 \times S$
defined by the same equations \eqref{eq:Y} as $Y^0$.
This family is locally trivial as a family of symplectic manifolds
by Moser's theorem.
The fundamental group $\MA_{k,l} := \pi_1(S)$ is
called the {\em mixed annular braid group}, and
the symplectic monodromy gives a homomorphism
$$
 \phi : \MA_{k,l} \to \pi_0(\Symp(Y^0, \omega))
$$
to $\pi_0$ of the symplectomorphism group of $Y^0$.
%This induces an action of $\MA_{k,l}$
%on the derived Fukaya category of $Y^0$
%by autoequivalences.

Choose the point
$$
 (a_1, a_2, \ldots, b_l)
 = (\zeta_{k+l}, \zeta_{k+l}^2, \ldots, \zeta_{k+l}^{k+l}) \in S,
  \quad
 \zeta_{k+l}=\exp(2 \pi \sqrt{-1}/(k+l))
$$
as a base point, and let $\gamma_i$
%for $i=1,\ldots,k+l-1$
be the line segment
from $\zeta_{k+l}^i$ to $\zeta_{k+l}^{i+1}$.
The half-twist $T_i$ along $\gamma_i$
interchanges $\zeta_{k+l}^i$ and $\zeta_{k+l}^{i+1}$, and
one can see that $T_i$ for $i \ne k, k+l$
and $(T_i)^2$ for $i=k, k+l$
belong to $\MA_{k,l}$.
%and $(T_k)^2$ generates a subgroup of $\MA_{k,l}$,
%which is isomorphic to the {\em mixed braid group} $\MB_{k,l}$.
%The latter group is defined
%as the fundamental group $\pi_1(S')$ of
%$$
% S' = \left. \lc (a_1, \ldots, b_l) \in (\bC)^{k+l} \relmid
%  \text{all the points $a_1, \ldots, b_l$ are distinct} \rc
%  \right/ \mathfrak{S}_k \times \mathfrak{S}_l.
%$$
Let $S_i$ be the compact Lagrangian submanifold of $Y^0$
defined by the path $\gamma_i$
as in \eqref{eq:compact_lag}.
The Lagrangian $S_i$ is homeomorphic to $S^3$
if $i=k, k+l$ and $S^2 \times S^1$ otherwise.
%The symplectic monodromy along $T_i$ is described as follows:
\begin{itemize}
 \item
For $i \neq k, k+l$,
we can identify a neighborhood of $S_i$ with $T^*S^1 \times T^*S^2$, and
the symplectic monodromy along $T_i$
is given by the symplectic Dehn twist in the $T^*S^2$ factor.
 \item
For $i=k, k+l$,
the symplectic monodromy along $(T_i)^2$
is the Dehn twist along the Lagrangian $S_i$.
\end{itemize}
It is an interesting problem
to explore the relation between this action
and the mixed braid group action
on the derived category of coherent sheaves on $X^0$
studied by Donovan and Segal
\cite{Donovan-Segal_MBG}.

\appendix

\section{Lefschetz wrapped Floer cohomology}

\subsection{Liouville domains and wrapped Floer cohomology}

An exact symplectic manifold with contact type boundary,
or a {\em Liouville domain} for short,
is a pair $(M, \theta)$ of a compact manifold $M$ with boundary
and a one-form $\theta$ on $M$ called the {\em Liouville one-form}
such that
\begin{itemize}
 \item
the two-form $\omega := d \theta$ is a symplectic form on $M$,
and
 \item
the {\em Liouville vector field} $Z$
defined by $\iota_Z \omega = \theta$
points strictly outward along the boundary $\partial M$.
\end{itemize}
The restriction $\alpha := \theta|_{\partial M}$
of the Liouville one-form
is a contact one-form on $\partial M$.
The {\em Reeb vector field} $R$ on $\partial M$
is defined by $R \in \Ker \alpha$ and $\alpha(R) = 1$.
The {\em symplectic completion} $\hat{M}$ of $M$ is obtained
by gluing the positive part
$$
 \lb \partial M \times [1,\infty), d(r \alpha) \rb
$$
of the symplectization of $\partial M$ onto $M$;
$$
 \hat{M} := M \cup_{\partial M} \lb \partial M \times [1,\infty) \rb.
$$
Let $L$ be a compact Lagrangian submanifold $L$ of $M$
such that
\begin{itemize}
 \item
$\theta |_L$ is exact; $\theta|_L = d f$,
 \item
$L$ intersects $\partial M$ transversally, and
 \item
$\theta|_L$ vanishes to infinite order
along the boundary $\partial L := L \cap \partial M$.
\end{itemize}
In this setting,
the completion
$$
 \hat{L} := L \cup_{\partial L} \lb \partial L \times [1, \infty) \rb,
$$
of $L$ is a Lagrangian submanifold of $\hat{M}$.

A Hamiltonian function $H \in C^\infty(\hat{M})$ is {\em admissible}
if there are constants $K > 0$, $a > 0$, and $b$ such that
\begin{align} \label{eq:H_rho}
 H(x, r) = a r + b,
 \qquad \forall (x, r) \in \partial M \times [K, \infty) \subset \hat{M}.
\end{align}
The constant $a$ is called the {\em slope} of $H$.
An almost complex structure $J$ on $\hat{M}$ is {\em admissible}
if it is of contact type
outside a compact set;
\begin{align} \label{eq:J_contact}
 d r = \theta \circ J,
  \qquad \forall (x, r) \in \partial M \times [K, \infty) \subset \hat{M}.
\end{align}
A {\em Reeb chord} of length $w$ is a trajectory
$x : [0,w] \to \partial M$ of the flow along $R$
such that $x(0) \in L$ and $x(w) \in L$.
An {\em integer Reeb chord} is a Reeb chord of integer length.
A {\em Hamiltonian chord} is defined similarly
as a trajectory of the Hamiltonian vector field
starting and ending at $L$.
If we write the time $t$ Hamiltonian flow as
$\varphi_t : \hat{M} \to \hat{M}$,
then a Hamiltonian chord of length $w$ corresponds
to an intersection point $p \in L \cap \varphi_w(L)$.
A Hamiltonian chord is {\em non-degenerate}
if the corresponding intersection is transversal.

Fix an admissible Hamiltonian $H$ of unit slope.
If $\dim M \ge 4$,
then by perturbing $L$ by an exact symplectic isotopy if necessary,
we may assume
\cite[Lemmas 8.1 and 8.2]{Abouzaid-Seidel_OSAVF}
that
\begin{itemize}
 \item
there are no integer Reeb chords,
 \item
all integer Hamiltonian chords are non-degenerate, and
 \item
no point of $L$ is both a starting point of an integer Hamiltonian chord
and an endpoint of an integer Hamiltonian chord,
which may or may not be the same chord.
\end{itemize}
For an integer $w$,
the set of Hamiltonian chords of length $w$ will be denoted by $\scX_w$.
The set $\scX_w$ is finite
since all the integer Hamiltonian chords are non-degenerate.
The {\em action} of $x \in \scX_w$ is defined by
$$
 A_{w H}(x) = \int_0^1 \big( x^* \theta - w H(x(t)) d t \big)
  + H(x(1)) - H(x(0)).
$$
The Floer complex is defined as the direct sum
\begin{align*}
 \CF^*(\hat{L}; wH) := \bigoplus_{x \in \scX_w} \bC [x],
\end{align*}
equipped with the grading
coming from the Maslov index.
The differential $\delta$ on $\CF(\hat{L}; wH)$ is given
by counting solutions to Floer's equation
\begin{align*}
\begin{cases}
 u : \bR \times [0,1] \to \hat{M}, \\
 u( \bR \times \{ 0, 1 \} ) \subset \hat{L}, \\
 \lim_{s \to \pm \infty} u(s, \cdot) = x_{\pm}(\cdot), \\
 \partial_s u + J_t(\partial_t u - w X_H) = 0
\end{cases}
\end{align*}
up to the $\bR$-action by translation in the $s$-direction.
Here, one has to choose a $t$-dependent almost complex structure
to achieve transversality in Floer's equation.
The conditions
\eqref{eq:H_rho} and \eqref{eq:J_contact}
gives the maximum principle for $u$,
which ensures the compactness of the moduli space.
The continuation map
$$
 \varphi : \CF^*(\hat{L}; wH) \to \CF^*(\hat{L}; (w+1)H)
$$
is defined as the sum
$$
 \varphi(x_+) = \sum_{u \in \scM(x_-, x_+)} \pm x_-
$$
over the set $\scM(x_-, x_+)$ of solutions to the continuation equation
\begin{align*}
\begin{cases}
 u : \bR \times [0,1] \to \hat{M}, \\
 u( \bR \times \{ 0, 1 \} ) \subset \hat{L}, \\
 \lim_{s \to \pm \infty} u(s, \cdot) = x_{\pm}(\cdot), \\
 \partial_s u + J_{s,t}(\partial_t u - X_s) = 0
\end{cases}
\end{align*}
where $J_{s,t}$ is an $(s, t)$-dependent almost complex structure
and $X_s$ is the Hamiltonian vector field
of an $s$-dependent Hamiltonian $H_s$
which coincides with $(w+1) H$ and $w H$
when $s \ll 0$ and $s \gg 0$ respectively.
A standard argument in Floer theory shows that the continuation map is a chain map,
which is independent of the choice of almost complex structures
up to chain homotopy.
The {\em wrapped Floer cohomology} is defined by
$$
 \HW(\hat{L}) := \varinjlim_w \HF(\hat{L}; wH).
$$
The continuation map is defined more generally
for a family $H_s$ of admissible Hamiltonians
with monotonically decreasing slope,
and satisfies the transitive law;
if one divides a family
$\{ H_s \}_s$ from $H_{-\infty}$ to $H_\infty$
smoothly into two,
%$\{ H^1_s \}_s$ and $\{ H^2_s \}_s$,
then the diagram
\begin{align*}
\begin{psmatrix}
 \HF(\hat{L}; H_\infty) & & \HF(\hat{L}; H_{-\infty}) \\
  & \HF(\hat{L}; H_0)
\end{psmatrix}
\psset{nodesep=5pt,arrows=->,shortput=nab}
\ncline{1,1}{1,3}^{\varphi}
\ncline{1,1}{2,2}_{\varphi_+}
\ncline{2,2}{1,3}_{\varphi_-}
\end{align*}
consisting of continuation maps commutes.
%\begin{align*}
% \varphi &: \HF(\hat{L}; H_\infty) \to \HF(\hat{L}; H_{-\infty}), \\
% \varphi^1 &: \HF(\hat{L}; H_\infty) \to \HF(\hat{L}; H_0), \\
% \varphi^2 &: \HF(\hat{L}; H_0) \to \HF(\hat{L}; H_{-\infty})
%\end{align*}
%satisfy
%$$
% \varphi = \varphi^2 \circ \varphi^1.
%$$
We say that a family $\{ H_p \}_p$ of admissible Hamiltonians is {\em cofinal}
if the slope of $H_p$ goes to infinity as $p$ goes to infinity.
The wrapped Floer cohomology can also be defined
as the limit of Floer cohomologies
with respect to any cofinal family
of non-degenerate admissible Hamiltonians.

The {\em triangle product} on wrapped Floer cohomology
is defined by counting solutions
of the inhomogeneous Cauchy-Riemann equation
\begin{align*}
\begin{cases}
 u : S \to \hat{M}, \\
 u( \partial S ) \subset \hat{L}, \\
 \lim_{s \to \pm \infty} u( \epsilon^k(s, \cdot)) = x^k(\cdot),
 \qquad k = 0, 1, 2, \\
 (d u_z - X_{u(z)} \otimes \gamma_z) \circ j
   + J_{z, u(z)} \circ (d u_z - X_{u(z)} \otimes \gamma_z) = 0,
\end{cases}
\end{align*}
where
\begin{itemize}
 \item
$w^k \in \bN$ and $x^k \in \scX_{w^k}$ for $k = 0, 1, 2$,
 \item
$S = D^2 \setminus \{ \zeta^0, \zeta^1, \zeta^2 \}$ is a disk
with three points on the boundary removed,
 \item
$\epsilon^0 : (-\infty, 0] \times [0,1] \to S$
and $\epsilon^{1,2} : [0, \infty) \times [0,1] \to S$
are {\em strip-like ends},
 \item
$j$ is the complex structure on $S$,
 \item
$\{ J_z \}_{z \in S}$ is a family of almost complex structures on $\hat{M}$,
%of contact type for large $\rho$
%parametrized by $z \in S$,
 \item
$\gamma$ is a one-form on $S$
satisfying
\begin{itemize}
 \item
$\gamma|_{\partial S} = 0$,
 \item
$d \gamma < 0$ on $S$,
 \item
$d \gamma = 0$ in a neighborhood of $\partial S$,
 \item
$(\epsilon^k)^* \gamma = w^k d t$ on the strip-like ends, and
\end{itemize}
 \item
$X \otimes \gamma \in \Hom(TS, u^*T\hat{M})$ is obtained
by composing $\gamma \in C^\infty(T^* S)$
with $u^*X \in C^\infty(u^* TM)$.
\end{itemize}
A more careful discussion
on the wrapped Floer cohomology can be found in \cite{Ritter_TQFTSC}.
To define higher $A_\infty$-operations,
one takes the homotopy colimit of the Floer cochain complex
instead of the colimit of the cohomology,
%on the wrapped Floer complex
and use moduli spaces of {\em stable popsicle maps}
\cite{Abouzaid-Seidel_OSAVF}.

\subsection{Lefschetz fibrations and wrapped Floer cohomology}\label{sc:lefschetz_WFC}

This section follows McLean \cite{MR2497314} closely.
An {\em exact Lefschetz fibration} is a proper map
$
 \pi : E \to S
$
from a compact manifold $E$ with corners
to a compact surface $S$ with boundary
satisfying the following:
\begin{itemize}
 \item
$\partial E$ consists of
the {\em vertical boundary} $\partial_v E := \pi^{-1}(\partial S)$ and
the {\em horizontal boundary} $\partial_h E := \partial E \setminus \partial_v E$
meeting in a codimension 2 corner.
 \item
$\pi$ is a $C^\infty$-map with finitely many critical points
$E^\crit \subset E$ with critical values $S^\crit \subset S$.
Every critical point is non-degenerate
in the sense that the Hessian is non-degenerate.
Different critical points have distinct critical values.
 \item
$E$ is equipped with a one-form $\Theta$
such that $\Omega = d \Theta$ is a symplectic form on $E_s \setminus E^\crit$
for every $s \in S$,
where $E_s := \pi^{-1}(s)$ is the fiber of $\pi$.
 \item
There is a neighborhood $N$ of $\partial_h E$
such that $\pi|_N : N \to S$ is a product fibration $S \times U$,
where $U$ is a neighborhood of $\partial F$ in $F$.
We require that $\Theta|_N$ is a pull-back from the second factor.
 \item
There are integrable complex structures $J_0$ (resp. $j_0$)
defined on a neighborhood of $E^\crit$ (resp. $S^\crit$)
such that $\pi$ is $(J_0, j_0)$-holomorphic near $E^\crit$.
 \item
$\Omega$ is a K\"{a}hler form for $J_0$ near $E^{\crit}$.
\end{itemize}

%One can show \cite[Lemma 1.5]{Seidel_LES} that
%if $\beta$ is a symplectic form $\beta$ on $S$ and
%$N$ is a sufficiently large real number,
%then $\Omega + N \pi^*\beta$ is a symplectic form on $E$.

There is a natural connection for $\pi$
given by the horizontal distribution
defined as the $\Omega$-orthogonal to the tangent space
to the fiber.
Parallel transport with respect to this connection
gives exact symplectomorphisms between
smooth fibers of $\pi$.
We write a smooth fiber of $\pi$
considered as an abstract exact symplectic manifold
as $F$.

We say that $E$ is a {\em compact convex Lefschetz fibration}
if $(F, \Theta|_F)$ is a Liouville domain.
Choose a Liouville one-form $\theta_S$ on the base $S$.
Then there is a constant $K > 0$ such that for all $k \ge K$,
one has
\begin{itemize}
 \item
$\omega := \Omega + k \pi^* \omega_S$ is a symplectic form
 \item
the $\omega$-dual $\lambda$ of $\theta := \Theta + k \pi^* \theta_S$
is transverse to $\partial E$
and pointing outward
\end{itemize}
by \cite[Theorem 2.15]{MR2497314}.
One can complete a compact convex Lefschetz fibration
to a {\em complete convex Lefschetz fibration} $\pi : \hat{E} \to \hat{S}$ in a natural way,
whose base is the completion $\hat{S}$ of the base $S$
and whose fiber is a completion $\hat{F}$ of the fiber $F$.
The completion $\hat{E}$ can be partitioned into
\begin{itemize}
 \item
$E \subset \hat{E}$,
 \item
$A := F_e \times \hat{S}$
where $F_e$ is the cylindrical end of $\hat{F}$, and
 \item
$B := \hat{E} \setminus (A \cup E)$
\end{itemize}
as in \cite[Figure 1]{MR2497314}.

The completion $\hat{E}$ is isomorphic
to the completion $\hat{M}$ of a Liouville domain $M$,
obtained by smoothing out the corner of $E$.
We write the radial coordinates for cylindrical ends of $\hat{E}$, $\hat{S}$ and $\hat{F}$
as $r$, $r_S$ and $r_F$.
There exists a positive constant $\varpi$
such that $r_S \le \varpi r$ and $r_F \le \varpi r$
by \cite[Lemma 5.7]{MR2497314}.

A map $H : \hat{E} \to \bR$ is a {\em Lefschetz admissible Hamiltonian}
if
$
 H|_A = \pi^*H_S + \pi_1^*H_{F}
$
%and
%$
% H|_B = \pi^*H_S
%$
outside some large compact set
\cite[Definition 2.21]{MR2497314}.
Here $H_S$ and $H_F$ are admissible Hamiltonians
on $\hat{S}$ and $\hat{F}$
such that $H_S = 0$ on $S \subset \hat{S}$
and $H_F = 0$ on $F \subset \hat{F}$ respectively
\cite[Page 1905]{MR2497314}, and
$\pi_1 : A = F_e \times \hat{S} \to F_e$ is the first projection.

Let $\gamma : [0, 1] \to S$ be a path on the base
such that $\gamma((0,1)) \subset S \setminus S^\crit$.
Recall that a {\em Lagrangian submanifold fibered over $\gamma$}
is a Lagrangian submanifold $L$ of $E$
obtained as the trajectory of the parallel transport along $\gamma$
of a Lagrangian submanifold $L_s$ in a fiber $E_s = \pi^{-1}(s)$.
We assume that
%\begin{itemize}
% \item
$L$ is exact,
%(i.e. $\theta|_L = d f$ for some $f \in C^\infty(L)$),
 %\item
$L$ intersects
$\partial E$ transversally,
%for any $s \in S$,
and
% \item
$\theta|_L$ vanishes to infinite order along $\partial L$.
%\end{itemize}
If an endpoint of $\gamma$
is in the interior of $S$,
then it must be a critical value of $\pi$.
If exactly one endpoint of $\gamma$ is in the interior of $S$,
then $L$ is a Lefschetz thimble.
If both endpoints of $\gamma$ are in the interior of $S$,
then $L$ is a Lagrangian sphere.
The Lagrangian $L \subset M$ can be completed
to a Lagrangian $\hat{L} \subset \hat{M}$
by first taking the completion
$
 \hat{L}_s := L_s \cup_{\partial L_s} ([1,\infty) \times \partial L_s)
  \subset \hat{E_s}$
in the fiber direction
and then taking its parallel transport along
$
 \hat{\gamma} = \gamma \cup_{\partial \gamma} ([1, \infty) \times \partial \gamma)
  \subset \hat{S}.
$
Since $\hat{L} \cap A$ is the product $(\hat{L}_s \setminus L_s) \times \hat{\gamma}$
and $\hat{L} \cap B$ is the product $L_s \times (\hat{\gamma} \setminus \gamma)$,
one has a maximum principle which applies to Lefschetz admissible $H$:

\begin{lemma}
For any Floer trajectory $u : D \to \hat{E}$,
the functions $r_S \circ u$ and $r_F \circ u$ do not admit local maxima
for large $r_S$ and $r_F$.
\end{lemma}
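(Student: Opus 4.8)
The plan is to treat the two radial functions separately, exploiting the product structure $A = F_e \times \hat{S}$ of the cylindrical region and the split form $H|_A = \pi^* H_S + \pi_1^* H_F$ of a Lefschetz admissible Hamiltonian. In both cases the strategy is the classical one: show that the relevant radial function, composed with $u$, is subharmonic in the region where it is large, so that the strong maximum principle forbids an interior local maximum.

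First I would handle $r_S$. Set $v := \pi \circ u : D \to \hat{S}$. Because the connection on $\pi$ is defined by the $\Omega$-orthogonal complement to the fibers and the almost complex structure is chosen to be compatible with the fibration in the cylindrical region, the projection $\pi$ intertwines there the ambient almost complex structure with a contact-type almost complex structure $j_S$ on $\hat{S}$, and carries the Hamiltonian vector field of $H$ to that of $H_S$, since $\pi_1^* H_F$ is constant along the fibers of $\pi$. Consequently $v$ solves a Floer-type equation on the surface $\hat{S}$ with Hamiltonian $H_S$. As $\hat{S}$ is the completion of a Liouville domain, $H_S$ is admissible (of the form $a r_S + b$ at infinity with $H_S = 0$ on $S$) and $j_S$ is of contact type, so the standard computation using $dr_S = \theta_S \circ j_S$ yields $\Delta(r_S \circ v) \ge 0$ for $r_S$ large; hence $r_S \circ u = r_S \circ v$ admits no interior local maximum there.

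For $r_F$ I would argue in the region $A = F_e \times \hat{S}$, where the first projection $\pi_1 : A \to F_e$ is available. The term $\pi^* H_S$ is constant along the $F_e$-factor and the almost complex structure splits in $A$, so $\pi_1 \circ u$ satisfies the fiber Floer equation with Hamiltonian $H_F$. Because $H_F$ is admissible on $\hat{F}$ with $H_F = 0$ on $F$ and the fiber almost complex structure $J_F$ is of contact type, the same subharmonicity computation---now using $dr_F = \theta_F \circ J_F$ for $\theta_F = \Theta|_F$---gives $\Delta(r_F \circ u) \ge 0$ for $r_F$ large, ruling out an interior local maximum. The comparison estimates $r_S \le \varpi r$ and $r_F \le \varpi r$ from \cite[Lemma 5.7]{MR2497314} ensure that the regions $\{ r_S \gg 0 \}$ and $\{ r_F \gg 0 \}$ lie inside $A$, where these splittings are valid.

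The main obstacle is verifying that the projected maps genuinely satisfy the respective Floer equations: the map $\pi$ is only honestly holomorphic near $E^\crit$, and both the fibration-compatibility of the almost complex structure and the product form of $H$ hold only in the cylindrical regions, so one must check that $\{ r_S \gg 0 \}$ (respectively $\{ r_F \gg 0 \}$) lies within $A$, and that the Hamiltonian term contributes with the correct sign. This last point is precisely where admissibility---the linear radial dependence of $H_S$ and $H_F$, making the Hamiltonian vector fields proportional to the Reeb field---combines with the contact-type condition on the almost complex structure to render the Hamiltonian-perturbed Laplacian of each radial function nonnegative; the bookkeeping of these terms, carried out as in \cite{MR2497314}, is the technical heart of the argument.
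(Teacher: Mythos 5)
Your interior argument is the right skeleton, and it is essentially the argument the paper (following McLean) has in mind: project to the base via $\pi$ for $r_S$, and to the fibre factor via $\pi_1$ on $A = F_e \times \hat{S}$ for $r_F$; the splitting $H|_A = \pi^* H_S + \pi_1^* H_{F}$, the linear radial dependence of $H_S$ and $H_F$ at infinity, and the contact-type condition $dr = \theta \circ J$ then make each radial function subharmonic where it is large. One local correction: $\pi_1^* H_F$ is \emph{not} constant along the fibres of $\pi$ (it varies in the $F_e$-direction); the statement you actually need is that its Hamiltonian vector field is tangent to the fibres of $\pi$, because on $A$ the symplectic form splits and the function is pulled back from the first factor, whence $d\pi(X_H) = X_{H_S}$.

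The genuine gap is that you only exclude \emph{interior} local maxima. Floer trajectories and continuation/product solutions are maps from a strip or punctured disc with boundary mapped to $\hat{L}$, so the lemma must also exclude maxima of $r_S \circ u$ and $r_F \circ u$ attained on $\partial D$, and subharmonicity alone says nothing about these. This boundary step is precisely the content of the sentence with which the paper introduces the lemma: $\hat{L} \cap A = (\hat{L}_s \setminus L_s) \times \hat{\gamma}$ and $\hat{L} \cap B = L_s \times (\hat{\gamma} \setminus \gamma)$, i.e.\ $\hat{L}$ is conical in the $r_F$-direction on $A$ and in the $r_S$-direction on $B$, with $\theta$ vanishing on the conical parts (the paper notes elsewhere that $\theta|_{\partial L} = 0$ forces $\theta|_{\hat{L} \setminus L} = 0$). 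This is what yields the Neumann-type condition along the boundary: for $\xi$ tangent to $\partial D$ one has $d(r \circ u)(j\xi) = dr(J\, du(\xi)) = -\theta(du(\xi)) = 0$ up to the Hamiltonian correction, since $du(\xi) \in T\hat{L}$, and then the Hopf boundary-point lemma rules out a boundary maximum. As written, your proof never uses the product structure of $\hat{L}$ in $A$ and $B$ --- the one hypothesis the paper explicitly invokes --- and without it the conclusion of the lemma does not follow.
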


This allows one to define the Floer differential
and the continuation map,
which gives the {\em Lefschetz wrapped Floer cohomology}
$$
 \HW_l^*(\hat{L}) := \varinjlim_w \HF^*(\hat{L}; w H).
$$
The Lefschetz wrapped Floer cohomology $\HW_l^*(\hat{L})$ does not depend
on the choice of a Lefschetz admissible Hamiltonian
just as in the case of the ordinary wrapped Floer cohomology.

%We now prove:

\begin{theorem} \label{th:appendix}
One has an isomorphism
\begin{align} \label{eq:appendix}
 \HW^*(\hat{L}) \cong \HW_l^*(\hat{L})
\end{align}
of graded rings.
\end{theorem}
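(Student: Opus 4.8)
The plan is to exploit the fact, recorded above, that the wrapped Floer cohomology may be computed as the direct limit of Floer cohomologies along \emph{any} cofinal family of non-degenerate admissible Hamiltonians, together with the transitive law for continuation maps (the commuting triangle above). Both $\HW^*(\hat L)$ and $\HW^*_l(\hat L)$ are defined as such direct limits $\varinjlim_w \HF^*(\hat L; wH)$, the former over ordinary admissible Hamiltonians that are linear in the single radial coordinate $r$ of $\hat E \cong \hat M$, and the latter over Lefschetz admissible Hamiltonians of the split form $\pi^* H_S + \pi_1^* H_F$ on the region $A$. The strategy is therefore to interleave a cofinal sequence of Lefschetz admissible Hamiltonians with a cofinal sequence of ordinary ones into a single commuting ladder of continuation maps, whose colimit supplies the isomorphism \eqref{eq:appendix}.

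First I would set up the slope comparison. Using the estimates $r_S \le \varpi r$ and $r_F \le \varpi r$ from \cite[Lemma 5.7]{MR2497314}, a Lefschetz admissible Hamiltonian with base slope $a_S$ and fiber slope $a_F$ grows at most linearly in $r$, while conversely it dominates a linear function of $r$ once $r$ is large. Hence for every ordinary admissible Hamiltonian there is a Lefschetz admissible one of larger slope, which can therefore receive a continuation map from it, and vice versa. For each such pair I would choose a monotone homotopy with monotonically decreasing slope and use it to define continuation maps in both directions between $\HF^*(\hat L; wK)$ and $\HF^*(\hat L; w'H')$. The transitive law then guarantees that the two interleaved systems share the same colimit, so passing to the limit yields a canonical graded isomorphism independent of all the choices.

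The argument turns on the compactness of the moduli spaces defining these continuation maps, which rests on a maximum principle for the interpolating homotopies. On the ordinary side this is the usual contact-type maximum principle coming from \eqref{eq:J_contact}; on the Lefschetz side it is the maximum-principle lemma stated just before this theorem, controlling $r_S \circ u$ and $r_F \circ u$ simultaneously. The hard part will be to extend this principle to the $s$-dependent interpolating Hamiltonians $\{H_s\}$: one must verify that the combined radial functions admit no interior maxima along continuation trajectories even as the Hamiltonian transitions between the two admissibility types. This is precisely the analytic input to be adapted from McLean \cite{MR2497314}, and I expect it to be the main obstacle.

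Finally I would upgrade the graded isomorphism to a ring isomorphism. Both products are instances of the operation attached to the thrice-punctured disk $S$ equipped with a sub-closed one-form $\gamma$ satisfying $d\gamma \le 0$, and the continuation maps are the operations attached to the strip carrying a monotone homotopy; compatibility of the isomorphism with the triangle product then follows from the standard gluing argument, interpolating the one-forms and perturbation data over the relevant moduli of domains. The only additional ingredient is again the simultaneous maximum principle for $r_S$ and $r_F$ on these higher-dimensional moduli spaces, which follows from the same estimates. With this in place the two products are intertwined by the isomorphism, completing the proof.
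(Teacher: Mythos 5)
Your overall strategy---interleave a cofinal family of ordinary admissible Hamiltonians with a cofinal family of Lefschetz admissible ones and pass to the colimit---is the right shape of argument, but it hides the entire difficulty in a step that does not actually go through as stated. You propose to connect the two families by continuation maps arising from monotone homotopies $\{H_s\}$ that interpolate between a Hamiltonian linear in the single radial coordinate $r$ and one of the split form $\pi^*H_S+\pi_1^*H_F$, and you defer compactness to ``a maximum principle for the interpolating homotopies, to be adapted from McLean.'' No such maximum principle is available: at intermediate $s$ the Hamiltonian is of neither admissibility type, so neither the contact-type maximum principle in $r$ nor the Lefschetz maximum principle in $(r_S,r_F)$ applies, and the estimates $r_S\le\varpi r$, $r_F\le\varpi r$ from \cite[Lemma 5.7]{MR2497314} only compare slopes (i.e.\ establish which Hamiltonian dominates which); they do not control Floer trajectories for a mixed homotopy. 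This is precisely the obstruction that McLean's argument, and the proof in this paper, are designed to avoid rather than overcome.

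The actual proof never runs a continuation map across the two admissibility types. Instead it uses an action filtration: one chooses a special cofinal family $\varrho_p$ of admissible Hamiltonians (negative on $M$, tending to $0$ in $C^2$, radial and convex on the end, with $rh_p'(r)-h_p(r)\to\infty$) so that for large $p$ every Hamiltonian chord has action greater than $-\epsilon$, giving $\HF^*(\hat L;\varrho_p)\cong\HF^*_{[-\epsilon,\infty)}(\hat L;\varrho_p)$. One then truncates $\varrho_p$ outside a large compact set and adds a Lefschetz admissible piece there to obtain $K_p$, arranging that all \emph{new} chords have action less than $-\epsilon$; the isomorphism $\HF^*_{[-\epsilon,\infty)}(\hat L;\varrho_p)\cong\HF^*_{[-\epsilon,\infty)}(\hat L;K_p)$ comes from a bijection of generators and of moduli spaces above the action cutoff, not from a continuation map. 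Only after this does one interleave, $K_{p_i}\le G_{q_i}\le K_{p_{i+1}}$, with a cofinal family $G_p$ of Lefschetz admissible Hamiltonians whose chords all have action greater than $-\epsilon$ --- and this interleaving takes place entirely within the class of Hamiltonians for which the Lefschetz maximum principle holds. Without the action-truncation step your ladder of continuation maps cannot be constructed, so the proposal as written has a genuine gap at its central step.
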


The isomorphism \eqref{eq:appendix}
is obtained by
\begin{align}
 \HW^*(\hat{L})
  &\cong \varinjlim_p \HF^*(\hat{L}; \varrho_p)
   \label{eq:isom1} \\
  &\cong \varinjlim_p \HF^*_{[-\epsilon,\infty)}(\hat{L}; \varrho_p)
   \label{eq:isom6} \\
  &\cong \varinjlim_p \HF^*_{[-\epsilon,\infty)}(\hat{L}; K_p)
   \label{eq:isom2} \\
  &\cong \varinjlim_p \HF^*_{[-\epsilon, \infty)}(\hat{L}; G_p)
   \label{eq:isom3} \\
  &\cong \varinjlim_p \HF^*(\hat{L}; G_p)
   \label{eq:isom4} \\
  &\cong \HW_l^*(\hat{L}),
   \label{eq:isom5}
\end{align}
which is an adaptation of the proof of
\cite[Theorem 2.22]{MR2497314}.
%Here $\varrho_p : \hat{E} \to \bR$ is an admissible Hamiltonian such that
%$
% \varrho_p((x, r)) = p r$ for $(x, r) \in \partial M \times [1 \times \infty)
%$
%and sufficiently large $r$.
%
Here $\varrho_p$ is a Hamiltonian function on $\hat{M}$ satisfying
\begin{enumerate}[(i)]
 \item
$\varrho_p < 0$ on $M$,
 \item \label{it:varrho2}
$\varrho_p$ goes to zero in the $C^2$ norm on $M$ as $p$ goes to infinity,
 \item \label{it:varrho3}
$\varrho_p$ depends only on the radial coordinate on the cylindrical end;
$$
 \varrho_p(x, r) = h_p(r),
 \qquad \forall (x, r) \in \partial M \times [1, \infty) \subset \hat{M}.
$$
 \item
$h_p'(r) \ge 0$ and $h_p''(r) \ge 0$ for all $r \in [1, \infty)$,
 \item
$h_p'(r) = p$ for $r \in [2, \infty)$, and
 \item \label{it:varrho6}
for any $K > 0$ and any $r \in (1, \infty)$,
there is an integer $N$ such that
$$
 r h_p'(r) - h_p(r) > K,
  \qquad \forall p > N.
$$
\end{enumerate}
The sequence $\{ \varrho_p \}_p$ is a cofinal family of admissible Hamiltonians,
so that the isomorphism \eqref{eq:isom1} comes from the definition of the wrapped Floer cohomology.

The condition \eqref{it:varrho2} implies that
for any $\epsilon > 0$,
the action of any Hamiltonian chord of $\varrho_p$ in $M$
is greater than $- \epsilon$ for sufficiently large $p$.
The condition \eqref{it:varrho3} implies that
the Hamiltonian vector field of $\varrho_p$ in the cylindrical end
is $h_p'(r)$ times the Reeb vector field on $\partial M$.
%$
% X = h_p'(r) R.
%$
It follows that Hamiltonian chords of length one
are in one-to-one correspondence
with Reeb chords of length $h_p'(r)$,
and the action of a Hamiltonian chord
$(x, r) : [0,1] \to M \times [1, \infty)$ is given by
\begin{align*}
 A_{\varrho_p}(x, r)
  &= \int_0^1 \big( x^* \theta - \varrho(x, r)) d t \big)
  + f(x(1)) - f(x(0)), \\
  &= r h_p'(r) - h_p(r)
  + f(x(1)) - f(x(0)).
\end{align*}
The condition $\theta|_{\partial L} = 0$ implies
$\theta|_{\hat{L} \setminus L} = 0$,
so that the primitive function $f(x) = \int^x \theta$ is constant
on each connected component of $\hat{L} \setminus L$
and hence bounded.
Then the condition \eqref{it:varrho6} shows that
the actions of Hamiltonian chords on the cylindrical end are positive
for sufficiently large $p$.
As a result,
one obtains the isomorphism \eqref{eq:isom6},
where $\HF^*_{[-\epsilon,\infty)}(\hat{L}; \varrho_p)$ is
the subgroup of $\HF^*(\hat{L}; \varrho_p)$
generated by chords of action greater than or equal to $\epsilon$.

The construction of $K_p$ from $\varrho_p$ proceeds in two steps
\cite{MR1760631,MR2497314}:
First one modifies $\varrho_p$ to a Hamiltonian $\varsigma_p$
which is constant outside a large compact set $\kappa$
while only adding chords of action less than $- \epsilon$.
%\cite{MR1760631}.
Then one adds to $\varsigma_p$ a Lefschetz admissible Hamiltonian $L_p$,
which is zero in the region $\kappa$
but has action bounded above,
so that Hamiltonian chords of $K_p := L_p + \varsigma_p$
outside $\kappa$ have action less than $- \epsilon$.
For a suitable choice of a family of admissible almost complex structures,
%The sequence $\{ K_p \}_p$ is a cofinal family of Lefschetz admissible Hamiltonians
%such that
\begin{itemize}
 \item
there is a bijection between Hamiltonian chords of $K_p$ of action
greater than $- \epsilon$
and Hamiltonian chords of $\varrho_p$, and
 \item
this bijection inducing an isomorphism of the moduli spaces of Floer trajectories.
\end{itemize}
This gives the isomorphism \eqref{eq:isom2}.
The sequence $\{ G_p \}_p$ is a cofinal family of Lefschetz admissible Hamiltonians
such that
\begin{itemize}
 \item
there are sequences $p_i$ and $q_i$ of positive integers such that
$$
 K_{p_i} \le G_{q_i} \le K_{p_{i+1}}
$$
for all $i$, and
 \item
all Hamiltonian chords of $G_p$ have action greater than $-\epsilon$.
\end{itemize}
This induces the isomorphisms \eqref{eq:isom3} and \eqref{eq:isom4}.
The isomorphism \eqref{eq:isom5} comes from the cofinality of $\{ G_p \}_p$
and \pref{th:appendix} is proved.

\subsection{Fiber products of Lefschetz fibrations}

Let $\pi_1 : E_1 \to S$ and $\pi_2 : E_2 \to S$ be exact Lefschetz fibrations
%with the common base
and consider the fiber product
%$
% E := E_1 \times_S E_2:
%$
\begin{align*}
\begin{psmatrix}
 & E := E_1 \times_S E_2 \\
 E_1 & & E_2 \\
 & S
\end{psmatrix}.
\psset{nodesep=5pt,shortput=nab,arrows=->}
\ncline{1,2}{2,1}_{\tau_1}
\ncline{1,2}{2,3}^{\tau_2}
\ncline{2,1}{3,2}^{\pi_1}
\ncline{2,3}{3,2}_{\pi_2}
\ncline{1,2}{3,2}^{\pi}
\end{align*}
By smoothing corners,
we obtain a Liouville domain $M$ with a Liouville one-form
$
 \theta = \tau_1^* \Theta_1 + \tau_2^* \Theta_2 + k \pi^* \theta_S
$
for sufficiently large $k$
whose completion $\hat{M}$
is symplectomorphic to $\hat{E} := \hat{E_1} \times_{\hat{S}} \hat{E_2}$.
We have fiberwise cylindrical coordinates $r_{F_i}$, $i=1, 2$ and
a cylindrical coordinate $r_S$ on the base.
We say that a Hamiltonian $H : E \to \bR$ is {\em fibered admissible}
if
$$
 H=\pi^*H_S + \tau_1^* H_{F_1} + \tau_2^* H_{F_2}
$$
where
\begin{itemize}
 \item
$H_S$ is an admissible Hamiltonian on $\hat{S}$, and
 \item
$H_{F_i}$ is a Hamiltonian on $\hat{E_i}$ which is
\begin{itemize}
 \item
zero on $E_i \cup B_i$, and
 \item
a pull-back of an admissible Hamiltonian of $(F_i)_e$
on $A_i := (F_i)_e \times \hat{S}$.
\end{itemize}
\end{itemize}
Lagrangian submanifolds $L_i$ of $E_i$
fibered over a common path $\gamma : [0, 1] \to S$
gives a Lagrangian submanifold $L := L_1 \times_\gamma L_2$ of $E$,
which can be completed to a Lagrangian submanifold $\hat{L}$ of $\hat{E}$.
Although $\pi : E \to S$ is not a Lefschetz fibration
but a Bott-Morse analog of a Lefschetz fibration,
the proof of \pref{th:appendix} can be adapted
in a straightforward way to prove the following:

\begin{theorem} \label{th:appendix2}
One has an isomorphism
\begin{align} \label{eq:appendix2}
  \HW^*(\hat{L}) \cong \varinjlim_p \HF^*(\hat{L}; p H)
\end{align}
of rings.
\end{theorem}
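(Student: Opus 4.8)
The plan is to imitate the proof of \pref{th:appendix} \emph{mutatis mutandis}, replacing the single fiberwise coordinate $r_F$ by the pair $(r_{F_1}, r_{F_2})$ and the Lefschetz admissible Hamiltonians by the fibered admissible Hamiltonians $H = \pi^* H_S + \tau_1^* H_{F_1} + \tau_2^* H_{F_2}$. Since $\HW^*(\hat{L})$ is computed by any cofinal family of admissible Hamiltonians, it suffices to run the same chain of isomorphisms \eqref{eq:isom1}--\eqref{eq:isom5}, now taking the role of $\{ G_p \}_p$ to be the cofinal family $\{ pH \}_p$ so that the last term becomes $\varinjlim_p \HF^*(\hat{L}; pH)$. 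First I would fix a family of admissible almost complex structures on $\hat{E}$ that, over the ends $A_i = (F_i)_e \times \hat{S}$ and near $\partial_h E$, splits compatibly with the product structure, so that each of the projections $\pi$, $\tau_1$, $\tau_2$ is holomorphic there.

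The key technical input will be the maximum principle. Because $H$ is a sum of pullbacks of admissible Hamiltonians in the base and in the two fiber directions, and because the chosen almost complex structure is of contact type separately in each of the coordinates $r_S$, $r_{F_1}$, $r_{F_2}$ near infinity, the argument of the Lemma preceding \pref{th:appendix} applies to each of $r_S \circ u$, $r_{F_1} \circ u$, and $r_{F_2} \circ u$ in turn: none of these functions admits a local maximum for large values. This yields the compactness needed to define the Floer differential, the continuation maps, and the triangle product using fibered admissible Hamiltonians.

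With compactness in hand, the remaining work is the action bookkeeping underlying \eqref{eq:isom6}--\eqref{eq:isom4}. The only change from the single-fibration case is that a Hamiltonian chord of $pH$ projects to chords of $pH_S$, $pH_{F_1}$, $pH_{F_2}$, and its action splits as the sum of the corresponding base and fiber contributions, each controlled exactly as in McLean's argument \cite{MR2497314}: one modifies $\varrho_p$ to $\varsigma_p$, adds a fibered admissible correction to obtain $K_p$, and sandwiches these against the $pH$, discarding at each step only chords of action below $-\epsilon$. Since the slopes of all three pieces of $pH$ tend to infinity with $p$, the family $\{ pH \}_p$ is cofinal, giving \eqref{eq:isom5}. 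The ring structure is preserved because every map in the chain is a continuation or comparison map compatible with the triangle product.

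The main obstacle I anticipate is that $\pi : E \to S$ is only a Bott--Morse analogue of a Lefschetz fibration: over a critical value of $\pi_1$ the entire fiber $E_2$ enters the degenerate locus, so the critical set is a submanifold rather than a collection of isolated nondegenerate points. However, all of the estimates above concern the behavior of Floer trajectories on the cylindrical ends $A_i$ and $B_i$, where the geometry is an honest product $(F_i)_e \times \hat{S}$ and the degenerate critical locus plays no role; transversality near the Bott--Morse locus can be achieved by a generic perturbation supported in the interior, as for clean intersections. Thus the degeneracy affects neither the maximum principle nor the action estimates, which is precisely why the adaptation of \pref{th:appendix} goes through in a straightforward manner.
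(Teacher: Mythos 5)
Your proposal follows essentially the same route as the paper: adapting the chain of isomorphisms \eqref{eq:isom1}--\eqref{eq:isom5} from the Lefschetz case, with the cofinal family $\{\varrho_p\}$ truncated to $\varsigma_p$ and corrected by a fibered admissible Hamiltonian to obtain $K_p$, all controlled by the action filtration and the product-type maximum principle on the ends. Your explicit treatment of the Bott--Morse degeneracy and of the separate maximum principles for $r_S$, $r_{F_1}$, $r_{F_2}$ is a reasonable fleshing-out of what the paper leaves implicit in its one-paragraph argument.
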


The right hand side does not depend on the choice
of a fibered admissible Hamiltonian $H$,
or a cofinal family $\{ H_p \}_p$ of fibered admissible Hamiltonians in general.
One starts with a cofinal family
$\{ \varrho_p \}_p$ of admissible Hamiltonians,
truncate outside a large compact set $\kappa$
to obtain $\varsigma_p$,
then adds a fibered admissible Hamiltonian $L_p$
supported outside of $\kappa$
to obtain $K_p = \varsigma_p + L_p$.
This process can be performed
without changing chords with actions greater than $- \epsilon$,
and one obtains the isomorphism \eqref{eq:appendix2}.

\subsection{Symplectic cohomology and the bulk-boundary map}

In view of McLean's work, it is also natural to discuss the implication of the calculations in this paper for symplectic cohomology. Our treatment here is less detailed because the discussion which follows is complementary to our main topic.
%Roughly speaking, symplectic cohomology is the closed string analogue of wrapped Floer cohomology--- we refer the reader to McLean's paper for detailed definitions.
%Consider a Lagrangian section $L$ of the SYZ fibration
%in either the $A_n$ case or the conifold setting.
%For definiteness, we consider the latter case.
%We have the following theorem:

\begin{theorem}
Let $L$ be an admissible Lagrangian
which is also a section of the SYZ fibration for the conifold.
Then we have an isomorphism of rings $SH^0(\hat{M}) \to WF(L)$.
\end{theorem}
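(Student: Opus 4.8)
The bulk--boundary map is the closed--open map $\mathcal{CO}^0 \colon SH^0(\hat M) \to HF^0(L,L) = WF(L)$ of Abouzaid \cite{Abouzaid_GCGFC}, which is a unital homomorphism of rings for any admissible $L$. The plan is to identify both sides with the ring $\Gamma(\scO_{X^0})$ of regular functions and to show that $\mathcal{CO}^0$ realizes the identity under these identifications. First I would record the boundary side: since $L$ is a section of the SYZ fibration it is Hamiltonian isotopic to one of the sections $L_\gamma$ of \pref{sc:lag_path}, so the diagonal part of \pref{cr:ring_isom} gives a ring isomorphism $WF(L) = \End_{\scW}(L) \cong \End(\scO_{X^0}(i)) \cong \Gamma(\scO_{X^0})$, independent of the winding number $i$ because $\End(\scO_{X^0}(i))$ is the same for every $i$.

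For the bulk side the key point would be generation. By \pref{th:hms} the objects $L_0,L_1$ correspond to $\scO_{X^0},\scO_{X^0}(1)$, which form a tilting object and hence split-generate $D^b\coh X^0$ by \pref{cr:conifold_tilting}; transporting through the equivalence, $L_0 \oplus L_1$ split-generates the wrapped category $\scW$. With split-generation in hand, I would invoke Abouzaid's generation criterion, together with the non-degeneracy of $\hat M$, to conclude that the closed--open map $\mathcal{CO} \colon SH^*(\hat M) \to HH^*(\scW)$ is an isomorphism of rings. The equivalence $D^b\scW \cong D^b\coh X^0$ then identifies $HH^*(\scW) \cong HH^*(\coh X^0)$, and in degree zero Hochschild--Kostant--Rosenberg gives $HH^0(\coh X^0) = H^0(X^0,\scO_{X^0}) = \Gamma(\scO_{X^0})$.

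It then remains to match the two identifications. The map $\mathcal{CO}^0$ factors as $\mathcal{CO}$ followed by the evaluation $HH^0(\scW) \to \End_{\scW}(L)$ that sends a natural transformation of the identity functor to its component at $L$. Under HMS this evaluation is $f \mapsto f \cdot \id_{\scO_{X^0}(i)}$, i.e.\ the identity map of $\Gamma(\scO_{X^0}) = \End(\scO_{X^0}(i))$, so composing with the isomorphism $\mathcal{CO}$ shows that $\mathcal{CO}^0$ is an isomorphism of rings.

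The hard part will be the input that $\mathcal{CO}$ is an isomorphism: this requires knowing that $\hat M = Y^0$ is non-degenerate, in the sense needed for the generation criterion, and that the closed--open formalism applies in this partially wrapped, non-exact setting, which is delicate since we have only defined $WF$ at the level of cohomology (see the remark in \pref{sc:fuk}). A more self-contained alternative would be to compute $SH^0(Y^0)$ directly from the double-conic fibration structure, using the symplectic-cohomology machinery of the Appendix and the fiber-product formalism of \pref{th:appendix2}, and then to verify the bulk--boundary map by an explicit count of holomorphic triangles as in \pref{pr:ring_isom}; the heavy step on that route is the Künneth-type analysis of the Reeb dynamics at infinity of the fiber product, which is why I would favour the generation argument.
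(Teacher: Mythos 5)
Your main route has a genuine gap at its central step. You want to deduce that $\mathcal{CO} \colon SH^*(\hat{M}) \to HH^*(\scW)$ is an isomorphism from the fact that $\scO_{X^0} \oplus \scO_{X^0}(1)$ is a tilting object. But Abouzaid's generation criterion runs in the opposite direction: one must first show that the open--closed map $HH_*(\scW) \to SH^*(\hat{M})$ hits the unit in order to conclude that $L_0 \oplus L_1$ split-generates the \emph{full} wrapped Fukaya category of $Y^0$; and the further step from generation to $\mathcal{CO}$ being an isomorphism is Ganatra's non-degeneracy theorem, whose hypotheses are not verified here. Split-generation of $D^b \coh X^0$ by the tilting object only tells you that $L_0 \oplus L_1$ generates the two-object category $\scW$ of \pref{th:hms} --- which is a tautology --- not that it generates the wrapped category containing all admissible Lagrangians. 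The paper itself signals that this abstract machinery is out of reach at its level of rigor: the closely related statement about the $\mathcal{CO}$-module structure is left as \pref{cj:PD}, explicitly described as a conjecture the authors could not verify, and the wrapped groups are only constructed at the level of cohomology, so the chain-level $A_\infty$/Hochschild formalism your argument needs is not even set up. You flag this yourself as ``the hard part,'' but it is not a technical refinement to be supplied later; it is the entire content of the statement, and without it the argument does not close.

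The paper instead proves the theorem by the direct computation you relegate to a fallback. For a fibered admissible Hamiltonian, the periodic orbits are Morse--Bott families: $T^3$-submanifolds fibered over circles in the base, plus orbits in the fibers over the discriminant locus. The latter are excluded from degree zero by a grading argument; each $T^3$-family contributes, after Morse--Bott perturbation, exactly one degree-zero generator of $SH^*$, matching the unique Hamiltonian chord of $L$ in that family. The map $\mathcal{CO}$ is then evaluated explicitly: the only contribution is a low-energy curve which in the Morse--Bott limit is the classical intersection $T^3 \cap L$, and all other configurations are excluded by the homotopy class of the projection to $\bCx$ together with an energy estimate in the fiber direction. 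No appeal to generation, non-degeneracy, or Hochschild invariants is needed. If you want a complete proof, carry out this computation (your ``self-contained alternative'') rather than the categorical route; the K\"unneth-type analysis of the Reeb dynamics you worry about is handled in the paper by the fibered-admissible Hamiltonians and maximum principles of \pref{th:appendix2}.
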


\begin{proof} Consider a fibered admissible Hamiltonian $H$ as in the main part of this paper and assume that the discriminant points are generically positioned away from $L$ inside of $M$. For an appropriate choice of $H$ as above, Hamiltonian orbits are precisely:

\begin{itemize}
\item $T^3$ submanifolds on $\hat{M}$, which fiber over circles in the base
\item one-dimensional tori of orbits living in the fibers over the discriminant locus
\end{itemize}

Standard Morse-Bott theory allows one to find a perturbation which has exactly $2^m$ orbits corresponding to generators of $H^*(T^m)$ for each submanifold of Hamiltonian orbits.  For our purposes, it will be sufficient to consider the $T^3$ submanifolds which fiber over circles in the base because the orbits of the second type have grading at least 2.  We will be interested in the $SH^0(\hat{M})$, which is generated by the cochains arising from $H^0(T^3)$.

A priori there could be a differential $$\partial: SH^0(\hat{M}) \to SH^1(\hat{M})$$

However, curves contributing to this differential would necessarily preserve the free homotopy class of the projection of the chord to $\mathbb{C}^*$. An energy estimate similar to that in \cite[Theorem 18.10]{Seidel_CDST} shows that curves connecting Morse-Bott submanifolds which live in the same fiber must live entirely within the fiber and hence there are no such differentials. \vskip 10 pt

 The essential geometric idea which underlies our theorem is that generators of wrapped Floer homology between $L$ and $\phi_{H}(L)$ consist of a single Hamiltonian chord in each of the $T^3$ submanifolds, which we may think of as being geometrically the same as the corresponding generator in $SH^0(\hat{M})$. To turn this observation into a precise statement, we note that Abouzaid has defined a closed-open unital ring homomorphism
$$
 \mathcal{CO} :SH^*(\hat{M}) \to WF(L).
$$
This map is defined by counting solutions to a perturbed $J$-holomorphic curve equation with one interior puncture which is required to be asymptotic to our Hamiltonian orbit. \vskip 10 pt

In our setting this map can be completely calculated. More precisely the map sends the class in $H^0(T^3)$ to the unique Hamiltonian chord of $L$ in each submanifold worth of orbits. The non-trivial component of our map corresponds to ``local" curves, e.g. curves which do not escape some fixed neighborhood of the orbits. Using elementary Morse-Bott analysis, one can show that these correspond to the classical intersection $T^3 \cap L$. As before, there can be no non-trivial curves connecting different Morse-Bott submanifolds. In particular, this map induces an isomorphism
$$
 SH^0(\hat{M}) \to WF(L).
$$
 \end{proof}

Our computations in this paper therefore allow us to compute $SH^0(\hat{M})$ as well:

\begin{corollary} $ SH^0(\hat{M}) \cong \mathbb{C}[u,v,w_1,w_1^{-1},w_2,w_2^{-1}]/(uv=(1+w_1)(1+w_2)) $ \end{corollary}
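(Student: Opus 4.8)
The plan is to combine the ring isomorphism $SH^0(\hat{M}) \cong WF(L)$ furnished by the preceding theorem with the homological mirror symmetry computation of \pref{sc:fuk}. Concretely, I would apply the theorem to $L = L_0$, the zero section of the SYZ fibration, which is an admissible Lagrangian section and therefore satisfies the hypotheses. Since all wrapped Floer groups in our setting are concentrated in degree zero, the ring $WF(L_0)$ is exactly the self-endomorphism ring $\Hom_{\scW_2}(L_0, L_0)$ computed in \pref{cr:ring_isom}, where it is identified as a ring with $\Hom(\scO_{X^0}, \scO_{X^0}) = \Gamma(X^0, \scO_{X^0})$. Thus the corollary is reduced to computing the ring of global regular functions on the open resolved conifold $X^0$.

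For that computation, I would exploit the crepant resolution $\varphi : X^0 \to Z^0$, where $Z^0 = Z \setminus \{ w_1 w_2 = 0 \}$ is the complement in the conifold $Z = \{ uv = (1+w_1)(1+w_2) \}$ of the divisor cut out by $w_1 w_2$. Here $X^0 = \varphi^{-1}(Z^0)$, and $Z^0$ is the principal affine open on which $w_1$ and $w_2$ are invertible, so its coordinate ring is obtained from that of $Z$ by inverting $w_1 w_2$:
$$
 \Gamma(Z^0, \scO_{Z^0}) = \bC[u,v,w_1,w_1^{-1},w_2,w_2^{-1}]/(uv - (1+w_1)(1+w_2)).
$$
Because $Z^0$ is normal and $\varphi$ is a proper birational morphism with connected fibers, one has $\varphi_* \scO_{X^0} = \scO_{Z^0}$, whence $\Gamma(X^0, \scO_{X^0}) = \Gamma(Z^0, \scO_{Z^0})$ is precisely the displayed ring.

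I do not anticipate a serious obstacle: the analytic content is already packaged in the theorem and in \pref{cr:ring_isom}, and what remains is elementary algebraic geometry. The only point meriting attention is the compatibility of ring structures along the chain of isomorphisms---namely that the composition product on $WF(L_0)$ corresponds under \pref{cr:ring_isom} to the Yoneda product on $\Hom(\scO_{X^0}, \scO_{X^0})$, which is simply multiplication of global functions, and that this in turn matches the ring structure on $\Gamma(Z^0, \scO_{Z^0})$ under $\varphi^*$. Each of these identifications is multiplicative by construction, so assembling them yields the claimed isomorphism of rings.
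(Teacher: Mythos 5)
Your proposal is correct and follows exactly the route the paper intends: the corollary is stated as an immediate consequence of the preceding theorem applied to the section $L_0$ together with the ring isomorphism $WF(L_0)\cong\Hom(\scO_{X^0},\scO_{X^0})=\Gamma(X^0,\scO_{X^0})$ from the wrapped Fukaya category computation. The one step the paper leaves implicit, namely that $\Gamma(X^0,\scO_{X^0})=\Gamma(Z^0,\scO_{Z^0})=\bC[u,v,w_1^{\pm 1},w_2^{\pm 1}]/(uv-(1+w_1)(1+w_2))$ via $\varphi_*\scO_{X^0}=\scO_{Z^0}$ for the small resolution of the normal affine variety $Z^0$, is exactly what you supply, and your attention to the multiplicativity of each identification is appropriate.
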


\begin{remark} While finishing this paper, we noticed that Pascaleff \cite{Pascaleff_SMLCY} has very recently proven a similar theorem in his study of log Calabi-Yau surfaces. While our notion of Lagrangian section comes from an SYZ fibration, Pascaleff considers Lagrangian sections of an SYZ fibration defined only in a neighborhood of the compactifying divisor for log Calabi-Yau surfaces. It would be interesting to study the relationship between these approaches in more detail.  \end{remark}

 \bibliographystyle{amsalpha}
\bibliography{bibs}

\noindent
Kwokwai Chan

Department of Mathematics,
The Chinese University of Hong Kong,
Shatin,
Hong Kong

{\em e-mail address}\ :\ kwchan@math.cuhk.edu.hk
\ \vspace{0mm} \\

\noindent
Daniel Pomerleano

Kavli Institute for the Physics and Mathematics of the Universe,
University of Tokyo, 5-1-5 Kashiwanoha, Kashiwa, 277-8583, Japan.

{\em e-mail address}\ :\ daniel.pomerleano@gmail.com
\ \vspace{0mm} \\

\noindent
Kazushi Ueda

Graduate School of Mathematical Sciences,
The University of Tokyo,
3-8-1 Komaba,
Meguro-ku,
Tokyo,
153-8914,
Japan.

{\em e-mail address}\ : \  kazushi@ms.u-tokyo.ac.jp
\ \vspace{0mm} \\

\end{document}